\documentclass[12pt]{article}
\usepackage{amsmath,amssymb,amsthm, amsfonts,tcolorbox}
\usepackage{hyperref}
\hypersetup{
     colorlinks=true,
     %to change color go back
     linkcolor=black,
     %to change color go to line 7
     filecolor=black,
     citecolor = black,      
     urlcolor=cyan,
     }

\usepackage{enumitem}
%<<<changed
\usepackage[margin=1in]{geometry} 
\usepackage[nameinlink]{cleveref}
\usepackage{graphicx,color}

\theoremstyle{definition}
\usepackage{url}
\usepackage{dsfont} % needed for double-struck 1
\usepackage{color}
\definecolor{red}{rgb}{1,0,0}

\definecolor{blue}{rgb}{0,0,1}

\definecolor{green}{rgb}{0,.6,0}

\usepackage{float}
\usepackage{lineno}
\usepackage{tabularx}
\usepackage{tikz,pgfplots}
\usetikzlibrary{decorations.pathreplacing,calligraphy}
\usetikzlibrary{shapes.geometric}
\usetikzlibrary{shapes.multipart}
\usetikzlibrary{arrows}

\usetikzlibrary{calc}
\usetikzlibrary{arrows.meta}
\usetikzlibrary{arrows}
\usetikzlibrary{patterns}
%\usetikzlibrary{fadings}
\usetikzlibrary{shapes,snakes}
\usetikzlibrary{decorations.markings}
\tikzset{vtx/.style={inner sep=1.7pt, outer sep=0pt, circle,fill,draw}, % style of a vertex
vtx red/.style={inner sep=2.7pt, outer sep=0pt, circle, fill=red,draw}, 
vtx blue/.style={inner sep=2.7pt, outer sep=0pt, circle, fill=blue,draw}, 
vtx green/.style={inner sep=2.7pt, outer sep=0pt, circle, fill=black!30!green,draw}, 
vtx gray/.style={inner sep=2.7pt, outer sep=0pt, circle, fill=gray!30!white,draw}, 
vtx3/.style={inner sep=1.2pt, outer sep=0pt, regular polygon, regular polygon sides=3, draw},
vtx4/.style={inner sep=1.7pt, outer sep=0pt, regular polygon, regular polygon sides=4, draw},
vtx5/.style={inner sep=1.7pt, outer sep=0pt, regular polygon, regular polygon sides=5, draw},
vtx_box/.style={inner sep=2pt, outer sep=0pt, rectangle, draw, fill=white},
blueish/.style={color=blue},
reddish/.style={color=red},
greenish/.style={color=black!30!green},
edge_blueish/.style={color=blue,line width=2pt},
edge_reddish/.style={color=red,line width=2pt,dashed},
edge_greenish/.style={color=black!30!green,line width=2pt, densely dotted},
edge_orange/.style={color=orange,line width=2pt},
%directed edge/.style={-latex},
%directed edge/.style={decoration={markings, mark=at position 0.5 with {\arrow{Latex}}},  postaction={decorate}},
directed edge/.style={decoration={markings, mark=at position 0.5 with {\arrow{Latex}}},  postaction={decorate}},
} 

% Text dimensions
%\setlength{\textheight}{8.8in}
%\setlength{\textwidth}{6.5in}
%\voffset = -14mm
%\hoffset = -10mm

\newtheorem{thm}{Theorem}[section]
\newtheorem{cor}[thm]{Corollary}
\newtheorem{lem}[thm]{Lemma}
\newtheorem{prop}[thm]{Proposition}

\newtheorem{obs}[thm]{Observation}

\newtheorem*{thm2.6}{Theorem 2.6}
\newtheorem*{thm3.4}{Theorem 3.4}
\newtheorem*{thm4.9}{Theorem 4.9}
\newtheorem*{thm4.19}{Theorem 4.19}

\theoremstyle{definition}
\newtheorem{rem}[thm]{Remark}

\theoremstyle{definition}

\theoremstyle{definition}

 % subdiv symbol

\newcommand{\fs}{\rightarrow}

% bold vectors

\newcommand{\mr}{\operatorname{mr}}

\newcommand{\M}{\operatorname{M}}

\newcommand{\Z}{\operatorname{Z}}

\newcommand{\bit}{\begin{itemize}}
\newcommand{\eit}{\end{itemize}}
\newcommand{\ben}{\begin{enumerate}}
\newcommand{\een}{\end{enumerate}}
\newcommand{\beq}{\begin{equation}}
\newcommand{\eeq}{\end{equation}}
\newcommand{\bea}{\begin{eqnarray}} 
\newcommand{\eea}{\end{eqnarray}}
\newcommand{\bpf}{\begin{proof}}
\newcommand{\epf}{\end{proof}\ms}
\newcommand{\bmt}{\begin{bmatrix}}
\newcommand{\emt}{\end{bmatrix}}
\newcommand{\ms}{\medskip}

\usepackage{tcolorbox}

\newcommand{\Zp}{\operatorname{Z}_{+}}

%%%%%%%%%%%%%%%%%%%%%%%%%%%%%%%%%%%%%%%%%%

\title{The $\Z_q$-forcing number for some graph families}

% \author{Jorge Blanco (Yale University),\and Stephanie Einstein (Mount Holyoke College),\and Caleb Hostetler (Eastern Mennonite University),\and Daniel Ogbe (Massachusetts Institute of Technology), \and advised by J\"urgen Kritschgau, PhD, (Carnegie Mellon University)\thanks{Dept.~of Mathematics, Carnegie Mellon University, Pittsburgh, PA, USA (jkritsch@andrew.cmu.edu)}}

% \author{Jorge Blanco \and Stephanie Einstein \and Caleb Hostetler\and 
% J\"urgen Kritschgau\thanks{Dept.~of Mathematics, Carnegie Mellon University, Pittsburgh, PA, USA (jkritsch@andrew.cmu.edu)} \and Daniel Ogbe}

\author{Jorge Blanco\thanks{Dept.~of Mathematics, Yale  University, New Haven, CT, USA (jorge.blanco@yale.edu)}  \and Stephanie Einstein\thanks{Dept.~of Mathematics, Mount Holyoke College, South Hadley, MA, USA (einst22s@mtholyoke.edu)} \and Caleb Hostetler\thanks{Dept.~of Mathematics and Computer Science, Eastern Mennonite University, Harrisonburg, VA, USA (caleb.hostetler@emu.edu)}\and 
J\"urgen Kritschgau\thanks{Dept.~of Mathematics, Carnegie Mellon University, Pittsburgh, PA, USA (jkritsch@andrew.cmu.edu)} \and Daniel Ogbe\thanks{Dept.~of Mathematics, Massachusetts Institute of Technology, Cambrige, MA, USA (dogbe@mit.edu)} }

%\date{\today}
\date{January 31, 2023}

\begin{document}
%\linenumbers
\maketitle

\begin{abstract}
The zero forcing number was introduced as a combinatorial bound on the maximum nullity taken over the set of real symmetric matrices that respect the pattern of an underlying graph. 
The $\Z_q$-forcing game is an analog to the standard zero forcing game which incorporates inertia restrictions on the set of matrices associated with a graph. 
This work proves an upper bound on the $\Z_q$-forcing number for trees. 
Furthermore, we consider the $\Z_q$-forcing number for caterpillar cycles on $n$ vertices.
We focus on developing game theoretic proofs of upper and lower bounds.
\end{abstract}

\section{Introduction}

The Inverse Eigenvalue Problem for a graph (IEPG) $G$ on $n$ vertices is to determine all possible spectra  of matrices in 
\[\mathcal S(G)  =\{ A\in \mathbb R^{n\times n}: a_{i,j}=a_{j,i}, \text{ for $i\neq j$, $a_{i,j}\neq 0$ iff $ij\in E(G)$}\}.\]
In particular, the IEPG is about the spectra of real symmetric matrices, with free diagonal entries, and off diagonal entries that respect the pattern of a graph's edges.

There are a lot of spectral properties one might care about: rank, nullity, multiplicity lists, exact spectrum, spectral radius, spectral gap, etc. 
Let $\mr(G)$ denote the minimum rank across matrices in $\mathcal S(G)$, and let $\M(G)$ denote the maximum nullity. 
Notice that by dimension counting 
\[\M(G)+\mr(G)=n.\]
Therefore, determining the maximum nullity and the minimum rank are equivalent problems. 
Furthermore, since $\mathcal S(G)$ is closed under diagonal translations (ie, if $A\in \mathcal S(G)$, then $A-kI\in \mathcal S(G)$ for all $k\in \mathbb R$), the problems of maximum nullity and maximum multiplicity of an eigenvalue are the same. 
For these reasons, focusing on the maximum nullity of matrices in $\mathcal S(G)$ is a nice problem. 
For an introduction to the maximum nullity and minimum rank, see \cite{barioli2010zero}.

Zero forcing was originally introduced as a tool to bound $\M(G)$ by the AIM Minimum Rank and Special Graphs Work Group in \cite{rank2008special}.
The zero forcing process starts with an initial set of blue vertices in a graph, and colors additional vertices blue according to the following color change rule: 
Given a graph $G$ and a blue vertex $v\in V(G)$, if $v$ has one white neighbor $w$, then $v$ forces $w$ ($v$ colors $w$ blue). 
A \textit{zero forcing set} for $G$ is an initial set of blue vertices $B$ such that after iteratively and exhaustively applying the zero forcing color-change rule, every vertex in $G$ is blue.
The \textit{zero forcing number} of a graph is the size of a minimum zero forcing set, and is denoted $\Z(G)$. 
For an overview on the results for standard zero forcing, see \cite{hogben2022inverse}.

The pair $(p,q)$ is the \emph{inertia}  of a symmetric matrix $A$ if $A$ has $p$ positive eigenvalues and $q$ negative eigenvalues. 
Notice that if $A$ has inertia $(p,q)$, then $A$ has nullity $n-p-q$. 
The inertia set of a graph $\mathcal I(G)$ is the set of inertia that can be achieved by matrices in $\mathcal S(G)$.
It is known that if $(p,q)\in \mathcal I(G)$, then $(p',q')\in \mathcal I(G)$ where $p'\geq p, q'\geq q$ (see the Northeast Lemma in \cite{barrett2009inverse}). 
Therefore, the interesting question is to determine the minimal inertia in $\mathcal I(G)$. 

To tackle this question, we consider $\M_q(G)$, the maximum nullity of matrices in $A\in\mathcal S(G)$ such that $A$ has at most  $q$ negative eigenvalues.
There is a corresponding zero forcing game  which bounds $\M_q(G)$.

\begin{tcolorbox}[arc = 0mm,colback=white]
\textbf{$\Z_q$-Forcing Game:} (quoted almost exactly from \cite{BGH2015}) All the vertices of a graph $G$ are initially colored white and there are two players, known as Blue (who has tokens), and White. 
Blue will repeatedly apply one of the following three rules until all vertices are colored blue.

\begin{enumerate}[label=Rule\,\arabic*:, wide=0pt, leftmargin=*]
    \item For one token, Blue can change any vertex from white to blue.
    \item At no cost, if a blue vertex $v$ in $G$ has exactly one white neighbor $w$, then Blue may turn $w$ blue.
    \item Let the blue vertices be denoted by $B$, and $W_1,\dots, W_k$ be the vertex sets of the connected components of $G[V\setminus B]. $ Blue selects at least $q+1$ of the $W_i$ and announces the selection to White. White then will select a nonempty subset of these components, say $\{W_{i_1},\dots, W_{i_\ell}\}$ (with $\ell\geq 1$), and announces it back to Blue. At no cost, Blue can apply Rule 2 on $G[B\cup W_{i_1}\cup\cdots\cup W_{i_\ell}].$
\end{enumerate}
\end{tcolorbox}

The parameter $\Z_q(G)$ denotes the fewest number of tokens Blue must spend to color the whole graph $G$ blue with adversarial play by White.
Notice that $\Z_q$ truly is a two player game. 
The standard zero forcing ($\Z$, or $\Z_n$) and PSD zero forcing ($\Zp$, or $\Z_0$) are not usually phrased as games like $\Z_q$ since there are optimal strategies for Blue where all tokens are spent upfront at the beginning of the game (meaning that there is no dynamic interaction between the choices Blue and White make).

We will use the notation $v\fs w$ to indicate that a blue vertex $v$ turns a white vertex $w$ blue according to some forcing rule. 
All other  graph theoretic notation we use follows \cite{west2001introduction}.

Notice that Rule $3$ is more useful for White when $q$ is small.

\begin{prop}[Proposition 3.2 in \cite{BGH2015}]
For any graph $G$ on $n$ vertices,
\[\Zp=\Z_0\leq \Z_1\leq \cdots\leq \Z_n=\Z.\]
\end{prop}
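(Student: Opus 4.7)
The statement splits into three parts: the monotonicity chain $\Z_q(G) \leq \Z_{q+1}(G)$ for $0 \leq q \leq n-1$, and the boundary identifications $\Z_0(G) = \Zp(G)$ and $\Z_n(G) = \Z(G)$. The plan is to dispatch monotonicity and the standard endpoint first, and then handle the PSD endpoint with more care.

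For $\Z_q(G) \leq \Z_{q+1}(G)$, only Rule 3 depends on $q$, and in the $\Z_{q+1}$-game it requires Blue to select at least $q+2$ components. Such a selection automatically has at least $q+1$ components, so it remains legal in the $\Z_q$-game, while Rules 1 and 2 and White's response options are unaffected. Thus any winning Blue strategy in $\Z_{q+1}$ replays verbatim in $\Z_q$ with the same token count. For $\Z_n(G) = \Z(G)$, the induced subgraph $G[V \setminus B]$ has at most $n$ components, so Rule 3 (which now requires $n+1$) is never applicable; the $\Z_n$-game degenerates into an adversary-free game with only Rules 1 and 2, which is the standard zero forcing game.

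The heart of the argument is $\Z_0(G) = \Zp(G)$. For $\Z_0 \leq \Zp$, the plan is to take a minimum PSD zero forcing set $B$, spend $|B|$ tokens via Rule 1 to color $B$ blue, and then simulate PSD forcing as follows: to emulate a PSD force $v \fs w$ with $w$ lying in a component $W_i$ of $G[V \setminus B]$, Blue invokes Rule 3 on the single component $\{W_i\}$, which forces White's response to be $\{W_i\}$, and then applies Rule 2 inside $G[B \cup W_i]$, where $v$ has $w$ as its unique white neighbor by definition of the PSD rule. For the reverse direction $\Zp \leq \Z_0$, the key observation is that selecting more than one component in Rule 3 can only hinder Blue: in the induced subgraph on $B \cup W_{i_1} \cup \cdots \cup W_{i_\ell}$, a blue vertex may retain additional white neighbors across the selected components, tightening the constraint for Rule 2. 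Hence Blue can, without loss of generality, restrict to single-component Rule 3 moves, under which White has no real choice; the applications of Rule 1 are then deterministic, and by the order-independence of PSD forcing the set of Rule 1 vertices forms a PSD zero forcing set of size $\Z_0(G)$.

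The principal obstacle is the reduction in the $\Zp \leq \Z_0$ direction: one must carefully justify that Blue's potentially adaptive strategy in $\Z_0$ can be flattened into a static PSD zero forcing set without increasing the token count. This step rests simultaneously on the ``multi-component Rule 3 never helps Blue'' observation and on the confluence of PSD forcing. Everything else in the proposition follows from a direct comparison of the rule statements.
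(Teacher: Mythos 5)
The paper does not prove this proposition at all --- it is quoted as Proposition 3.2 of \cite{BGH2015} and used as a black box --- so there is no in-paper argument to compare yours against. On its own merits, your proof is correct and is essentially the standard one. The monotonicity step and the identification $\Z_n=\Z$ are immediate for the reasons you give (a legal Rule 3 selection for the $\Z_{q+1}$-game is legal for the $\Z_q$-game, and $G[V\setminus B]$ never has $n+1$ components). The only place that genuinely needs care is $\Zp\leq\Z_0$, and your reduction there is sound: if a multi-component Rule 3 move guarantees Blue a force against every White response, then in particular against the response ``return everything'' some $v\fs w$ is available in $G[B\cup W_{i_1}\cup\cdots\cup W_{i_\ell}]$, and since $v$'s white neighbors in $G[B\cup W_j]$ (where $w\in W_j$) form a subset of its white neighbors in the larger induced subgraph, the same force is already a PSD force from $B$; this holds even if Rule 3 is read as permitting several consecutive Rule 2 applications inside the selected subgraph, since the white components only refine as forces occur. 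Combined with the monotonicity (order-independence) of PSD forcing to flatten the interleaved Rule 1 moves into a single initial set, this gives $\Zp\leq\Z_0$, and your single-component simulation gives the reverse inequality. I would only suggest making explicit that the flattening argument uses the ``White returns everything'' branch of Blue's strategy to extract a deterministic play of cost at most $\Z_0(G)$; as written, ``Blue can WLOG restrict to single-component moves'' leaves that small bookkeeping step implicit.
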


Matrices $A\in \mathcal S(G)$ that realize $\M_q(G)$ give White a strategy for selecting components in Rule 3 of the $\Z_q$-forcing game. 
This insight leads to the following theorem.

\begin{thm}[Theorem 3.3 in \cite{BGH2015}]
For any graph $G$ and non-negative integer $q$, 
\[\M_q(G)\leq \Z_q(G).\]
\end{thm}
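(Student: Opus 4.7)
The plan is to use a matrix $A\in\mathcal{S}(G)$ realizing $\M_q(G)$ to drive White's strategy in the game. Fix such an $A$: it satisfies $\nul(A)=\M_q(G)$ and has at most $q$ negative eigenvalues. For any $B\subseteq V(G)$, write
\[
U_B \;:=\; \{x\in\ker(A):\ x|_B=0\}.
\]
The invariant I wish to preserve throughout the game is
\[
\dim U_B \;\ge\; \M_q(G)-t,
\]
where $t$ is the number of Rule~1 tokens Blue has spent so far. At the terminal state $B=V(G)$ we have $U_B=\{0\}$, so the invariant forces $t\ge\M_q(G)$, which is the desired bound.

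Rules 1 and 2 are handled by standard arguments. A Rule 1 move adds one vertex $v$ to $B$, imposing the single new linear condition $x_v=0$, so $\dim U_B$ decreases by at most one, matching the unit increase in $t$. For a Rule 2 move in which $v\in B$ forces its unique white neighbor $w$, the row equation $(Ax)_v=0$ combined with $x|_B=0$ and the fact that $v$ has no other white neighbor in $G$ collapses to $A_{vw}x_w=0$; since $A_{vw}\neq 0$, every $x\in U_B$ already satisfies $x_w=0$, so $U_{B\cup\{w\}}=U_B$.

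Rule 3 is where the inertia hypothesis on $A$ delivers White's strategy. When Blue announces components $W_1,\dots,W_k$ of $G[V\setminus B]$ with $k\ge q+1$, the key observation is that distinct $W_i$ share no edges in $G$, so
\[
A[W_1\cup\cdots\cup W_k] \;=\; A[W_1]\oplus\cdots\oplus A[W_k].
\]
Inertia is additive on direct sums, and Cauchy interlacing applied to the principal submatrix $A[\bigcup_i W_i]$ inside $A$ yields that the total number of negative eigenvalues among the $A[W_i]$ is at most $q$. Since there are $k\ge q+1$ nonnegative integer summands with sum at most $q$, pigeonhole furnishes an index $i^\star$ for which $A[W_{i^\star}]$ is positive semidefinite, and White's rule is to select this component.

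The main obstacle is to then verify that the Rule 2 propagation inside $G[B\cup W_{i^\star}]$ preserves $U_B$: each newly blue $w\in W_{i^\star}$ must satisfy $x_w=0$ for every $x\in U_B$. The subtlety is that a vertex $v\in B$ effecting a Rule 2 force in the subgraph may still have white neighbors in the unselected components, so the equation $(Ax)_v=0$ does not immediately collapse to $A_{vw}x_w=0$ as it did for the standalone Rule 2 case. The intended resolution exploits the Gram factorization $A[W_{i^\star}]=C^{\top}C$, which realizes the columns of $C$ as vectors $\{c_u:u\in W_{i^\star}\}$ with $c_u\cdot c_{u'}=A_{uu'}$. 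Any $x\in U_B$ has $x|_{W_{i^\star}}\in\ker(A[W_{i^\star}])$, equivalently $\sum_{u\in W_{i^\star}}x_u c_u=0$. Combining this Gram relation with the equations $(Ax)_v=0$ for $v\in B$ and iterating through the cascading Rule 2 forcings inside $W_{i^\star}$, analogously to the classical PSD zero forcing argument, should yield $x_w=0$; making this cancellation precise for the mixed inertia setting is where the real work lies.
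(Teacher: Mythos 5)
First, a framing note: the paper does not prove this statement itself; it is quoted from \cite{BGH2015}, and the only in-paper hint is the remark that a matrix realizing $\M_q(G)$ ``gives White a strategy for selecting components in Rule 3.'' Your skeleton matches that hint and is the natural one: the invariant $\dim U_B\geq \M_q(G)-t$, the handling of Rules 1 and 2, and the interlacing observation that among $q+1$ announced components at least one block $A[W_i]$ is positive semidefinite are all correct.

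The gap is in the Rule 3 step, which is the entire content of the theorem, and it is worse than the unfinished ``cancellation'' you flag at the end: the White strategy you propose (return a single component $W_{i^\star}$ with $A[W_{i^\star}]$ PSD) is demonstrably insufficient, so no Gram-factorization bookkeeping inside $A[W_{i^\star}]$ can finish the argument. Concretely, let $q=2$ and let $G$ be the tree with edges $va$, $vc$, $vb_1$, $b_1b_2$, $b_2b_3$; take $A\in\mathcal S(G)$ with all edge entries $1$ and zero diagonal except $A_{vv}=\alpha$, $A_{cc}=\gamma$ with $\alpha,\gamma$ large. One checks $A$ has inertia $(3,2,1)$ and $\ker A$ is spanned by the vector $x$ with $x_v=x_{b_2}=x_c=0$, $x_a=-1$, $x_{b_1}=1$, $x_{b_3}=-1$. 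At the state $B=\{v\}$, Blue announces the three components $\{a\}$, $\{c\}$, $\{b_1,b_2,b_3\}$; the block $A[\{a\}]=(0)$ is PSD, yet if White returns $\{a\}$ then $v$ forces $a$ and $x_a\neq 0$, so $\dim U_B$ drops on a free move and your invariant fails. The failure is structural: positive semidefiniteness of $A[W_{i^\star}]$ constrains only the rows of $A$ indexed by $W_{i^\star}$, while the force equation lives in a row indexed by $B$, which also sees the unselected components. Moreover the corrected criterion cannot be block-by-block at all: for $P_3$ with $B$ the center and $q=1$, both leaf components have the PSD block $(0)$, yet returning either one alone concedes a force onto a vertex in the support of the kernel; White's only safe reply is to return \emph{both}, which blocks every force. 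So White's selection must depend on where the vectors of $U_B$ are supported (for instance, favoring components $W$ such that the extension by zero of $x|_W$ lies in $\ker A$ for every $x\in U_B$, and otherwise returning enough components to prevent any force at all), and proving that a safe nonempty subset always exists is exactly where the hypothesis of at most $q$ negative eigenvalues has to do real work. That argument is missing from your write-up.
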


Butler et al. continued the study of $\Z_q$ in \cite{BEFHKLSWY20}.
In particular, \cite{BEFHKLSWY20} contains some results characterizing graphs $G$ with $\Z_q(G)$ equal to $1$ or $2$. 
The authors of \cite{BEFHKLSWY20} also find a way to compute $\Z_1(T)$ for trees via the following theorem:

\begin{thm}[Theorem 6 in \cite{BEFHKLSWY20}]\label{cite.thm6}
Let $T$ be a tree with $|V(T)|\geq 3$. Let $\mathcal P_S(v)$ denote the set of maximal paths in a tree $S$ with $v$ as one end point (i.e., paths with $v$ as an endpoint and not contained in a longer path). Then 
\[\Z_1(T)= 2 +\max_{v\in V(T)}\left(\max_{\substack{S\subseteq T\\v\in V(S)}}\left(\min_{P\in\mathcal P_S(v)}\left(\sum_{w\in V(P)} (\deg_S(w)-2)\right)\right)\right).\]
\end{thm}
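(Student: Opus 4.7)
The plan is to prove the two inequalities $\Z_1(T)\leq 2+M(T)$ and $\Z_1(T)\geq 2+M(T)$ separately, where we set
\[M(T):=\max_{v\in V(T)}\max_{\substack{S\subseteq T\\v\in V(S)}}\min_{P\in\mathcal P_S(v)}\sum_{w\in V(P)}(\deg_S(w)-2).\]

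\textbf{Upper bound.} I will describe an explicit Blue strategy. Let $v^*$ realize the outer maximum in $M(T)$, and let $P^*=w_0w_1\cdots w_k$ be a maximal path in $T$ starting at $w_0=v^*$ that realizes the inner minimum for $S=T$. Blue's opening spends two tokens to color $v^*$ and the leaf endpoint $w_k$ of $P^*$. Starting at $w_k$, Rule~2 propagates blue backwards along $P^*$ until the forcing chain stalls at a blue vertex $w_{j+1}$ that has a white off-path neighbor. At that point, the off-path subtrees at $w_{j+1}$ (there are at most $\deg_T(w_{j+1})-2$ of them) must be handled, which Blue does by combining Rule~3 announcements with extra Rule~1 tokens, charging at most $\deg_T(w_{j+1})-2$ tokens to $w_{j+1}$. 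The argument continues recursively: each off-path white subtree is attached to the blue set at one vertex $x$, and Blue applies the same scheme using a path in $\mathcal P_{S'}(x)$ realizing the inner minimum for $S'$ equal to that subtree. Summing the charges yields a total of at most $\sum_{w\in V(P^*)}(\deg_T(w)-2)\leq M(T)$ tokens beyond the opening two, proving $\Z_1(T)\leq 2+M(T)$.

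\textbf{Lower bound.} For the matching inequality I will design a White strategy together with a potential function. Let $v^*$ and $S^*$ witness the outer maxima in $M(T)$, so that $\min_{P\in\mathcal P_{S^*}(v^*)}\sum_{w\in V(P)}(\deg_{S^*}(w)-2)=M(T)$. Given any game state with blue set $B$, define $\Phi(B)$ so as to quantify the remaining hardness by tracking a surviving ``witness subtree'' within $V(T)\setminus B$ anchored near $v^*$, together with the contribution already paid by blue vertices. The target claims are: $\Phi$ starts at $2+M(T)$; $\Phi$ does not decrease under Rule~2; under Rule~3, White selects the subset of Blue's announced components that preserves the witness structure, keeping $\Phi$ unchanged; and each Rule~1 move decreases $\Phi$ by at most $1$. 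Since $\Phi=0$ only when the game ends, this forces Blue to make at least $2+M(T)$ Rule~1 purchases.

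\textbf{Main obstacle.} The principal difficulty is the lower bound and, within it, proving monotonicity of $\Phi$ under Rule~3. There, Blue may announce any collection of at least two white components, and White must choose a subset so that forcing in the restricted subgraph $G[B\cup W_{i_1}\cup\cdots\cup W_{i_\ell}]$ makes no meaningful progress toward dismantling the witness in $S^*$. The argument requires a careful case split: when the announced components include pieces of the critical witness subtree, White selects a subset that retains the costliest branching on paths from $v^*$; when they lie outside the witness, White may safely select all of them since $S^*$ is untouched. Organizing these cases so that every Rule~3 play is neutralized, and Blue can never ``bypass'' the $2+M(T)$ barrier, is the crux; it will likely require restating the inner minimum/outer maximum invariant as a property of the entire game-state substructure, rather than of a single fixed path.
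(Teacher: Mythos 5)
First, a point of orientation: this statement is not proved in the paper at all --- it is Theorem~6 of \cite{BEFHKLSWY20}, quoted purely as background --- so there is no in-paper argument to compare yours against, and your proposal must stand on its own. It does not. For the upper bound, the opening you prescribe (spend two tokens on $v^*$ and on the far leaf endpoint $w_k$ of a path $P^*$ realizing the inner minimum for $S=T$) already fails on a small example. Let $T$ be the spider with center $c$, middle vertices $m_1,m_2,m_3$, and leaves $\ell_1,\ell_2,\ell_3$. Here the formula gives $M(T)=0$ (take $v=c$, $S=T$; each leg contributes $(3-2)+(2-2)+(1-2)=0$), so Blue must win with only the two opening tokens. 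Your opening colors $c$ and $\ell_1$; Rule~2 forces $m_1$, leaving exactly two white components (the other two legs). Blue's only possible Rule~3 announcement is both of them, White returns both, $c$ still has two white neighbors, and the game stalls --- a third token is required. The winning opening is two leaves on different legs, which is not of the form you describe. Beyond the example, the accounting ``charge at most $\deg_T(w_{j+1})-2$ tokens to $w_{j+1}$'' is asserted rather than proved: an off-path white subtree can itself demand many tokens, your recursion incurs costs inside those subtrees that do not appear in $\sum_{w\in V(P^*)}(\deg_T(w)-2)$, and nothing in the argument explains why the max over subtrees $S\subseteq T$ (rather than just $S=T$) must appear in the bound.

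For the lower bound you have written a plan, not a proof. The potential $\Phi$ is never defined; the three monotonicity claims (invariance under Rule~2, invariance under White's Rule~3 response, decrease by at most $1$ under Rule~1) are precisely the content of the theorem rather than consequences of anything you establish; and you yourself flag the Rule~3 case analysis as the crux without carrying it out. In particular, you never say how White's choice of subset interacts with the min-over-maximal-paths structure --- which is the whole reason the formula has the shape it does. As it stands, neither inequality is established.
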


In Section \ref{sec.tools}, we introduce some tools that we use throughout the paper. 
This section culminates in an analysis of $\Z_q$ under edge deletion, with the following result:

\begin{thm2.6}
Let $q\geq 0$ and $e\in E(G)$. Then 
\[\Z_q(G)-2\leq \Z_q(G-e)\leq \Z_q(G)+1.\]
\end{thm2.6}

We then move on to prove a general bound for trees in Section \ref{sec.treesForests}, given below.

\begin{thm3.4}
Let $T$ be a tree on $n\geq 2$ vertices.
Let $h=h(T,v)$ be the height of $T$ rooted at $v$, and $L_\ell$ is the set of at most $q$ vertices with the largest degrees at height $\ell$.
Then 
\[\Z_q(T)\leq  \min_{v \in V(T)}\left\{ \deg(v) +  \sum_{\ell = 1}^{h-1}\sum_{a\in L_\ell}\max\{0, \deg(a)-2\}\right\}.\]
\end{thm3.4}

Finally, we consider the $\Z_q$-forcing number for Caterpillar cycles in Section \ref{sec.cat}. 
We define $C_{n,k}$ to be some graph obtained from a cycle $C_n$ by adding at least $2$ and at most $k$ pendant vertices to each vertex in $C_n$. 
We define $P_{n,k}$ analogously. 
In Section $\ref{sec.cat}$, we prove the following bounds in the cases when $q=2,3$:

\begin{thm4.9}
 For $ k\geq 2$ and $n\geq 3$, we have \[\log_2(n)+1\leq \Z_2(C_{n,k}) \leq \log_2(n-2) + 2k + 1.\]
\end{thm4.9}

\begin{thm4.19}
 Let $n\geq 3$, $k,q\geq 2$, and $p= q+1+x$ where $x\equiv q+1\mod 2$. 
 Then \[2\log_3(2)\log_2(\tfrac{n}{200}) \leq \Z_{q}(C_{n,k}) \leq (p-2)\log_{2}(n) + p+ k-q+q(k-1).\]
\end{thm4.19}

Furthermore, we prove the upper bound in Theorem \ref{thm.corona.upper} for $q\geq 3$, which we believe is close to the truth.

\subsection{Enumeration Notation}
Throughout this paper, it will be convenient to enumerate rules, strategies, and phases of or concerning the $\Z_q$-forcing game and variations thereof. 
To help the reader keep in mind what each enumeration is enumerating we will use the following conventions: 
\begin{itemize}
\item Rules will be enumerated as Rule 1, Rule 2, etc. with alterations to these rules denoted by an asterisk as in Rule 3$^*$. 
\item Strategies for Blue and White will be enumerated as Option 1, Option 2, etc. with the intent that the player will use the first option available to them or that options are accompanied by mutually exclusive conditions that make it clear which option a player exercises during a game. 
\item We will describe the course of a $\Z_q$-forcing game through enumerated phases (Phase 1, Phase 2, etc.). 
This will be helpful in describing how  $\Z_q$-forcing games will play out while leaving the strategies for Blue and White implicit. 
\item Finally, we will enumerate how a strategy for a player given graph $G$ can be converted to a strategy for a graph $H$ by Adaptation 1, Adaptation 2, etc. 
This will be helpful when the strategy for a graph $G$ is not explicit, and we would like to translate some strategy for $G$ into a strategy for $H$, thereby relating their $\Z_q$ numbers.
\end{itemize}

\section{Preliminary Results}\label{sec.tools}

Let $G$ be a graph, and let $B\subseteq V(G)$. 
The minimum number of additional tokens Blue uses to color all vertices blue, continuing from the game state in which the vertices in $B$ are blue, is denoted $\Z_q(G,B)$.

\begin{obs}
For any graph $G$ and non-negative integer $q$ we have 
\[\Z_q(G)=\min_{B\subseteq V(G)}\{|B|+\Z_q(G,B)\}\]
and 
\[\Z(G,B)\leq \Z_q(G).\]
\end{obs}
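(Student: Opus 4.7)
The plan is to split the observation into its two claims and handle each directly from the game definitions. For the equality $\Z_q(G) = \min_{B\subseteq V(G)}\{|B|+\Z_q(G,B)\}$, I would prove both inequalities. The direction $\min_{B}\{|B|+\Z_q(G,B)\}\le\Z_q(G)$ follows immediately by specializing $B=\emptyset$, since by definition $\Z_q(G)=\Z_q(G,\emptyset)$, giving $|\emptyset|+\Z_q(G,\emptyset)=\Z_q(G)$. For the reverse direction $\Z_q(G)\le |B|+\Z_q(G,B)$ for arbitrary $B$, I would exhibit a concrete Blue strategy of this cost: Blue first spends $|B|$ tokens applying Rule 1 to each vertex of $B$ in turn, reaching the state with exactly $B$ blue, and then plays an optimal continuation strategy for $\Z_q(G,B)$, costing $\Z_q(G,B)$ additional tokens. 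Since $\Z_q(G)$ is the infimum over all Blue strategies, this bound is immediate.

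For the second inequality (which I read as $\Z_q(G,B)\le\Z_q(G)$, the natural monotonicity claim), the approach is a simulation argument: giving Blue extra pre-colored vertices cannot increase the required number of tokens. Concretely, fix an optimal Blue strategy $\sigma$ for $\Z_q(G)$ and construct a Blue strategy $\sigma'$ for the game starting from $B$ that mirrors $\sigma$ move-by-move, skipping any Rule 1 application that $\sigma$ makes to a vertex already in $B$ (thereby saving a token). The key invariant to maintain across moves is that the blue set under $\sigma'$ is always a superset of the blue set under $\sigma$; whenever $\sigma$ applies Rule 2 or Rule 3, $\sigma'$ attempts the analogous move.

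The main obstacle is verifying that Rules 2 and 3 behave correctly under this simulation, since both depend delicately on the current blue set. Rule 2's ``exactly one white neighbor'' condition is the subtle point, as extra pre-colored vertices could in principle invalidate it; however, if $v\fs w$ is a Rule 2 force in $\sigma$, then in $\sigma'$ either $w$ is already blue (nothing to do) or $w$ is still the unique white neighbor of $v$ and the same force succeeds. For Rule 3, the components of $G[V\setminus B']$ in $\sigma'$ (where $B'$ is $\sigma'$'s blue set) are unions of subsets of the components in $\sigma$'s position, so Blue can present a compatible selection of at least $q+1$ components, and White's adversarial response in $\sigma'$ maps to a legal response that $\sigma$ is already prepared to handle. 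Chaining these observations along $\sigma$'s move sequence, $\sigma'$ completes the game from $B$ using at most $\Z_q(G)$ tokens (possibly fewer, because of skipped Rule 1 applications), proving $\Z_q(G,B)\le\Z_q(G)$.
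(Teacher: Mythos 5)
The paper records this as an Observation with no proof, so there is no argument of the authors' to compare against; I am judging your write-up on its own terms. Your proof of the first claim is correct and surely the intended one: taking $B=\emptyset$ shows the minimum is at most $\Z_q(G)=\Z_q(G,\emptyset)$, and for the reverse inequality ``pay for $B$ with Rule 1, then continue optimally from $(G,B)$'' is a legal Blue strategy of total cost $|B|+\Z_q(G,B)$.

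There are two issues with the second claim. First, you quietly emend the printed inequality $\Z(G,B)\le\Z_q(G)$ to $\Z_q(G,B)\le\Z_q(G)$. Some emendation is indeed forced --- at $B=\emptyset$ the printed version reads $\Z(G)\le\Z_q(G)$, which contradicts the chain $\Z_0\le\Z_1\le\cdots\le\Z_n=\Z$ from the introduction whenever $\Z_q(G)<\Z(G)$ --- but you should state the emendation as such rather than bury it in a parenthetical, since another plausible intended reading is the (trivial) $\Z_q(G,B)\le\Z(G,B)$. Second, and more importantly, your simulation argument for the monotonicity $\Z_q(G,B)\le\Z_q(G)$ has a genuine gap at Rule 3. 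If the real blue set strictly contains the simulated one, a white component $W_i$ selected by $\sigma$ may have become entirely blue on the real board, so the number of surviving sub-components can fall below $q+1$ and the Rule 3 move may not be legal at all; your assertion that ``Blue can present a compatible selection of at least $q+1$ components'' is exactly what needs to be proved. Even when enough components do survive, a selected $W_i$ can split into several real components, White may return only some of them, and the forced vertex that $\sigma$ guarantees for the corresponding response on the simulated board may lie in a piece that White withheld, so the force does not automatically transfer. This is precisely the difficulty the authors confront explicitly in the proof of Theorem~\ref{thm.edge.deletion} (Adaptation 5), where even the splitting of a single component into two pieces requires a dedicated contradiction argument; your sketch needs an argument of at least that strength before the second claim can be considered proved.
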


\begin{obs}\label{edge_lemma}
For any graph $G$ and subset $B\subseteq V(G)$ of blue vertices, let $E_B$ be a subset of the edges between any two vertices in $B$. Then $\Z_q(G, B)=\Z_q(G- E_B, B)$ for any $q$.
\end{obs}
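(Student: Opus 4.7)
The plan is to show that the game trees for $(G,B)$ and $(G-E_B,B)$ are literally identical, so that any strategy for Blue (and any response by White) in one game can be played move-for-move in the other, giving equal values. The central observation will be a simple invariant: throughout any play of the game, the set of blue vertices can only grow, so if $B'$ denotes the blue set at any intermediate state, then $B\subseteq B'$ and therefore every edge in $E_B$ still has both endpoints in $B'$.

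First I would set up this invariant formally and then verify, rule by rule, that the legal moves from game state $(G,B')$ coincide with the legal moves from state $(G-E_B,B')$. For Rule 1, the statement is trivial since it only concerns placing a token on a white vertex and does not use edges at all. For Rule 2, a blue vertex $v$ forces a white neighbor $w$ exactly when $v$'s white-neighborhood is $\{w\}$; since edges in $E_B$ go between two vertices of $B'$, removing them does not change the white neighborhood of any vertex. For Rule 3, the relevant structure is the induced subgraph $G[V\setminus B']$ and its connected components, together with the possibility of forcing inside $G[B'\cup W_{i_1}\cup\cdots\cup W_{i_\ell}]$; once again, every edge of $E_B$ has both endpoints in $B'\subseteq V\setminus(V\setminus B')$, so $E_B$ contributes nothing to these induced subgraphs nor to the adjacency between $B'$ and the components $W_i$.

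Having verified that the collections of available moves agree at every reachable state, I would conclude by a straightforward induction on the number of remaining white vertices that the minimum number of tokens Blue must spend against optimal White play is the same in both games, giving $\Z_q(G,B)=\Z_q(G-E_B,B)$. No step here should be a genuine obstacle; the only mild subtlety is being explicit that $E_B$ is defined with respect to the initial blue set $B$ yet remains harmless later on precisely because the blue set is monotone nondecreasing, so a single sentence establishing $B\subseteq B'$ at every state is what carries the whole argument.
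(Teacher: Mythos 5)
Your proof is correct, and it supplies exactly the justification the paper leaves implicit: the paper states this as an Observation with no proof, and your rule-by-rule check (anchored on the monotonicity of the blue set, so that every edge of $E_B$ always joins two blue vertices and hence never affects white neighborhoods or the components of $G[V\setminus B']$) is the intended argument.
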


We refer to blue vertices with $2$ or more white neighbors as \textit{active vertices}. 
We say the \textit{activity} of a graph is the number of active vertices the graph has after Blue  has exhaustively used Rule 2.

\begin{prop}\label{tree.activity.0}
Let $T$ be a tree with non-empty blue set $B$. 
If the activity of $T$ is $0$, then $B$ is a zero forcing set of $T$. 
In particular, Blue can use initial set $B$ and Rule 2 to color all of $T$ blue.
\end{prop}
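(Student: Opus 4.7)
The plan is to proceed by contradiction and exploit the connectivity of $T$. Specifically, I would start the Rule 2 forcing process from $B$ and let $B'\supseteq B$ denote the set of blue vertices once Rule 2 has been applied exhaustively. The goal is to show $B' = V(T)$, which immediately yields both conclusions.

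First I would unpack the two hypotheses on $B'$. By the definition of ``activity,'' the hypothesis that $T$ has activity $0$ means that no vertex in $B'$ has two or more white neighbors in $T$. On the other hand, ``exhaustively applying Rule 2'' means that Rule 2 can no longer fire, i.e.\ no vertex in $B'$ has exactly one white neighbor (otherwise that neighbor would already have been forced). Combining these two observations, every vertex of $B'$ has zero white neighbors in $T$; equivalently, there is no edge of $T$ between $B'$ and $V(T)\setminus B'$.

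Now I would invoke the structural hypothesis that $T$ is connected together with the assumption that $B$ is non-empty. Since $B\subseteq B'$, the set $B'$ is non-empty. If $V(T)\setminus B'$ were also non-empty, the absence of edges across the cut would disconnect $T$, contradicting the fact that $T$ is a tree on the vertex set $V(T)$. Hence $B' = V(T)$, showing that Rule 2 alone suffices to color every vertex of $T$ blue starting from $B$, and therefore $B$ is a zero forcing set of $T$.

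I do not expect any substantial obstacle here: the argument is essentially a one-line use of connectivity, and the only subtlety worth flagging is the careful distinction between ``no blue vertex has two or more white neighbors'' (which is what activity $0$ gives) and ``no blue vertex has a white neighbor at all'' (which requires additionally that Rule 2 has been exhausted). Making that distinction explicit is the key step.
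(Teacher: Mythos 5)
Your argument is correct and is essentially the paper's proof: both combine the observation that activity $0$ rules out blue vertices with two or more white neighbors, exhaustion of Rule 2 rules out blue vertices with exactly one white neighbor, and connectivity of $T$ then forces the white set to be empty. The paper phrases this as a direct contradiction from a single white vertex with a blue neighbor, while you phrase it as an empty edge cut, but the content is identical.
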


\begin{proof}
Assume that the activity of $T$ is $0$. 
For the sake of contradiction, suppose Rule 2 has been exhaustively applied, but $T$ still contains a white vertex.
This implies that there exists a white vertex $w$ with a blue neighbor $v$ (since $T$ is connected). 
Since the activity of $T$ is $0$, it follows that $v$ has exactly one white neighbor (namely $w$). 
This contradicts the assumption that Rule 2 has been exhaustively applied. 
\end{proof}

\begin{lem}\label{force_lemma}
Suppose that $G$ is a forest and $B\subseteq V(G)$ is a set of blue vertices. 
Let $b$ be the number of blue vertices with at least two white neighbors. 
If $b\geq q+1$, then Blue can use Rule 3 to perform a force. 
\end{lem}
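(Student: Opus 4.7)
The plan is to introduce an auxiliary bipartite graph $H$ whose parts are the active blue vertices $B_{\text{act}}$ (those with at least two white neighbors) and the connected components of $G[V\setminus B]$, with an edge $v \sim W$ whenever $v$ has a white neighbor in $W$. Two structural facts will be needed. First, $H$ is itself a forest: any cycle $v_1 - W_1 - v_2 - W_2 - \cdots - v_k - W_k - v_1$ in $H$ lifts to a cycle in $G$ by concatenating the witnessing edges $v_i w_i^{(1)}$ and $w_i^{(2)} v_{i+1}$ with an in-component path from $w_i^{(1)}$ to $w_i^{(2)}$ inside each connected set $W_i$, contradicting that $G$ is a forest. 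Second, every $v \in B_{\text{act}}$ has $\deg_H(v) \ge 2$, since two white neighbors of $v$ in the same component would likewise close a cycle in $G$.

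Next I would run a leaf-stripping procedure on $H$ to select Blue's $q+1$ announced components together with witness blue vertices. For $i = 1, 2, \ldots, q+1$, pick a component-side leaf $W_i$ of the current subforest, let $v_i$ be its unique remaining blue neighbor, and then delete $v_i$. A component-side leaf exists at each step: deleting only blue vertices preserves the neighborhoods (and hence degrees) of all surviving blues, so every remaining blue still has degree $\ge 2$, which forces every leaf of the current bipartite subforest onto the component side, and a degree-sum count within each tree component guarantees at least two such leaves. The hypothesis $b \ge q+1$ ensures the process survives all $q+1$ rounds. By construction, the adjacency in $H$ is upper-triangular: each $W_i$'s blue neighbors lie in $\{v_1, \ldots, v_i\}$ and always include $v_i$, equivalently $v_k \sim_H W_i$ only when $i \ge k$.

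Blue then announces $\{W_1, \ldots, W_{q+1}\}$ in Rule 3. For any nonempty $S \subseteq \{W_1, \ldots, W_{q+1}\}$ that White returns, let $j = \max\{i : W_i \in S\}$. By the upper-triangular structure, $v_j$'s $H$-neighbors among the announced components lie in $\{W_j, \ldots, W_{q+1}\}$ and include $W_j$, so intersecting with $S$ leaves exactly $\{W_j\}$. Since in a forest each blue vertex has at most one white neighbor per component, $v_j$ has exactly one white neighbor in $G[B \cup \bigcup_{W \in S} W]$, and Rule 2 applies via $v_j$, so Rule 3 successfully performs a force.

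The main obstacle is the structural preamble: carefully proving $H$ is a forest (where the cycle-lifting step leans on the connectedness of each $W_i$ to supply the internal path between the two witnesses) and verifying that the leaf-stripping invariant — every remaining active blue vertex retains degree $\ge 2$ in the current subforest — persists through all $q+1$ rounds. Once these are in place, the upper-triangular adjacency and the max-index argument that produces a Rule 2 force are immediate.
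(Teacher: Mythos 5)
Your proof is correct, but it takes a genuinely different route from the paper's. The paper roots each tree of $G$ at an active blue vertex, observes that every active vertex then has a white child, and assigns to each active vertex one white component containing such a child (acyclicity makes this assignment injective); Blue passes all $b\geq q+1$ assigned components, and among the active vertices associated with White's returned components, one of minimum depth is shown to have exactly one white neighbor in the augmented graph, because a returned component containing its white parent would have to be assigned to a strictly shallower active vertex. You instead work in the bipartite incidence graph $H$ between active vertices and white components, prove that $H$ is a forest in which every active vertex has degree at least two, and extract an elimination ordering by repeatedly stripping component-side leaves; the resulting upper-triangular adjacency lets the maximum-index component in White's reply pin down a unique forcer. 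Both arguments use acyclicity to impose an order under which an extremal element of White's reply is forced by a single blue vertex---depth in a rooted tree for the paper, elimination order in the auxiliary forest for you. Your version is more systematic and sidesteps the paper's slightly delicate handling of white parents, at the cost of the structural preamble on $H$ (which you have verified correctly: the cycle-lifting and the degree-preservation invariant both hold); passing exactly $q+1$ components rather than all $b$ of them is immaterial for Rule 3.
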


\begin{proof}
Let $G$ be a forest.
Let  $A=\{a_1, a_2, \dots, a_b\}$ the set of $b$ active blue vertices.

For each maximal tree $T$ in $G$ that contains an active blue vertex, root $T$ at an arbitrary active blue vertex contained in $T$.
If $v$ is a blue vertex without a white child, then it has at most one white neighbor (its parent) and is not active.
Therefore, we know that each active blue vertex must have at least one white child vertex in its rooted tree. 
For each active vertex $a_i$ in $A$, Blue will choose one white component $W_i$ that contains a vertex that is a white child of $a_i$ to assign to $a_i$. 
No component will be assigned to multiple active vertices, as this would imply that $G$ contained a cycle, which is not possible since $G$ is a forest. 
Blue now has a set $W=\{W_1, W_2, ... , W_b\}$ of $b\geq q+1$ distinct white connected components to send to White.

White will return a subset of $W$, whose union we will call $S$. 
Let $A_S\subseteq A$ be the set of active blue vertices associated with the components returned by White, and let $a_m\in A_S$ be an active vertex with minimum distance to the root of the tree containing $a_m$.
In the graph $G[B\cup S]$, $a_m$ must have only one white child vertex due to the way components were assigned. 
Furthermore, $a_m$  cannot have a white parent in the set, or else $a_m$ would not have the minimum distance to it's root among the active vertices.
Therefore $a_m$ will have exactly one white neighbor in the  $G[B\cup S]$, and Blue will be able to force that white neighbor blue for free.
\end{proof}

When there are at least $q+1$ active vertices in a forest $G$, then Lemma~\ref{force_lemma} guarantees that Blue will get a free force. 
The details of how Blue and White pass white components back and forth determines which white vertex will turn blue. 
Essentially, this is the interesting part of the $\Z_q$-forcing game since Blue and White may not have the same preferences. 
By modifying the details of Rule 3, we will be able to apply Lemma~\ref{force_lemma} for both upper and lower bounds. 
In particular, once we appeal to Lemma~\ref{force_lemma} we can give White unilateral or nearly unilateral power to decide which vertex turns blue to derive upper bounds. 
Similarly, we can give Blue unilateral or nearly unilateral power to decide which vertex turns blue to derive lower bounds. 
This idea will appear in various forms throughout the proofs in this paper. 

The next theorem analyzes how $\Z_q$ behaves under edge deletion.

\begin{thm}\label{thm.edge.deletion}
Let $q\geq 0$ and $e\in E(G)$. Then 
\[\Z_q(G)-2\leq \Z_q(G-e)\leq \Z_q(G)+1.\]
\end{thm}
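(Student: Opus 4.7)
I will address the two inequalities separately, writing $e = uv$.

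For the lower bound $\Z_q(G) \le \Z_q(G-e) + 2$, the plan is to take an optimal initial blue set $B$ for $G-e$ (so that $\Z_q(G-e) = |B| + \Z_q(G-e, B)$) and play in $G$ from the initial set $B \cup \{u, v\}$, paying at most $|B| + 2$ tokens upfront. Since no rule ever turns a blue vertex white, $u$ and $v$ stay blue for the entire game, so $e$ always lies between two blue vertices, and Observation~\ref{edge_lemma} will yield
\[\Z_q(G, B \cup \{u, v\}) = \Z_q(G-e, B \cup \{u, v\}).\]
It then suffices to establish the monotonicity-style bound $\Z_q(G-e, B \cup \{u, v\}) \le \Z_q(G-e, B)$ by simulation: Blue replays the optimal strategy from $B$, treating $u, v$ as bonus blue vertices. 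Rule 2 forces carry over because blueing extra vertices only shrinks the relevant white-neighbor sets in harmless ways, and any Rule 3 step disturbed by the two extra blue vertices can be patched with a refined component or skipped outright because the missing piece was a singleton whose only role was to force $u$ or $v$.

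For the upper bound $\Z_q(G-e) \le \Z_q(G) + 1$, the plan is to start from an optimal Blue strategy $\sigma$ in $G$. The crucial observation is that $e$ can be used in at most one Rule 2 force over the course of $\sigma$ (counting uses inside Rule 3 invocations): the first such force blues both endpoints, after which $e$ has no white endpoint left to enable another. If $e$ is never used, then $\sigma$ is valid in $G-e$ verbatim and in fact $\Z_q(G-e) \le \Z_q(G)$. Otherwise let $y \in \{u, v\}$ be the endpoint forced via $e$; pre-color $y$ blue in $G-e$ (one extra token) and simulate $\sigma$. The single $e$-force is skipped since $y$ is already blue, and every other Rule 2 force translates directly because blueing $y$ only affects white-neighbor counts through $y$ itself. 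For Rule 3, deleting $e$ can only refine the white components, except in the degenerate case where a presented component equals $\{y\}$; but in that case the corresponding Rule 3 step in $\sigma$ serves only to force $y$, so it is safely skipped in the simulation.

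The main obstacle will be verifying correctness of Rule 3 in both simulations: the requirement of $q+1$ white components is sensitive to the structure of the white subgraph, which shifts under edge deletion or addition of blue vertices. My plan is to handle this through careful case analysis, exploiting that each perturbation disturbs only a bounded number of components and that every problematic Rule 3 step turns out to be one whose target is either already pre-blued or replaceable by a refined component, so that the $q+1$ precondition is preserved or the step can be legitimately elided.
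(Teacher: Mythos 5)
Your lower bound argument is essentially the paper's: color both endpoints of $e$, invoke Observation~\ref{edge_lemma} to identify the two games once $u,v$ are blue, and appeal to monotonicity of $\Z_q(G,\cdot)$ in the blue set. That part is fine (you are in fact more explicit than the paper about the monotonicity step needing a simulation).

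The upper bound, however, has two genuine gaps. First, ``let $y\in\{u,v\}$ be the endpoint forced via $e$; pre-color $y$'' is not well defined: which endpoint (if either) ends up being the target of the force across $e$ depends on White's adversarial Rule~3 responses, so Blue cannot commit to $y$ at the start of the game. The paper avoids this by spending the extra token \emph{adaptively}, at the moment the force across $e$ would have occurred (its Adaptation~3), which costs the same but requires no foreknowledge. Second, and more seriously, your treatment of Rule~3 misses the actual difficulty. The problematic case is not a presented component equal to $\{y\}$; it is a presented component $W$ containing both endpoints of $e$ as white vertices, which in $G-e$ may split into two nontrivial components $W_v$ and $W_w$. (Note this can happen even if $e$ is never used in a Rule~2 force, so your claim that $\sigma$ transfers ``verbatim'' in that case is also unjustified.) Blue must then decide what to present in place of $W$: presenting both halves gives White strictly more responses, and the force guaranteed against White's response $S$ in $G$ may target a vertex lying in the half White withheld. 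The paper's resolution is a genuine argument: Blue presents $\{W_1,\dots,W_q,W_v\}$ or $\{W_1,\dots,W_q,W_w\}$, and if both choices could be blocked by White responses $W^v$ and $W^w$ respectively, then $W^v\cup W^w$ would block the original move in $G$, a contradiction. Your proposal asserts the conclusion (``replaceable by a refined component'') without this argument, and since this is the crux of the proof, the proposal as written does not establish the upper bound.
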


\begin{proof}
To prove the first inequality, notice that the presence or absences of edge $e$ does not matter after both $u$ and $v$ are blue. 
Therefore, Blue can use a strategy that guarantees at most $\Z_q(G-e)$ tokens are spent to turn $G-e$ blue after initially coloring the end points of $e$ blue. 
More formally, if $e=vw$, then  \[\Z_q(G)\leq \Z_q(G-e,\{v,w\})+2\leq \Z_q(G-e)+2. \]

To prove the second inequality, suppose that Blue has a strategy to turn $G$ blue in at most $\Z_q(G)$ tokens. 
Let $\{G_i=(G,B_i)\}$ be the family of possible game states that Blue's strategy allows, starting from $G_0 =(G,\varnothing)$. 
We will show that Blue can adapt this strategy for $G$ to a strategy for $G-e$ so that Blue will not spend more than $\Z_q(G)+1$ tokens to turn $G-e$ blue. 
Let $e=vw$. The adapted strategy is as follows:
\begin{enumerate}[label=Adaptation\,\arabic*:, wide=0pt, leftmargin=*]
\item If Blue would use Rule 1 to turn $u\notin\{v,w\}$ blue in game state $(G,B)$, then Blue will use Rule 1 to turn $u$ blue in game state $(G-e,B)$.
\item If Blue would use Rule 2 (possibly as a substep of Rule 3) with $x\fs y$ in game state $(G,B)$ and $xy\neq vw$, then Blue will use Rule 2 with $x\fs y$ in game state $(G-e,B)$.
\item If Blue would use Rule 2 (possibly as a substep of Rule 3) with $x\fs y$ in games state $(G,B)$ and $xy=vw$, then Blue will use Rule 1 to turn $y$ blue in game state $(G-e,B)$.
\item If Blue would use Rule 3 to pass $\{W_1,\dots, W_{q+1}\}$ to White in game state $(G,B)$ and $W_1,\dots, W_{q+1}$ are white components in game state $(G-e,B)$, then Blue will use Rule 3 to pass $\{W_1,\cdots, W_{q+1}\}$ to White in game state $(G-e,B)$.
\item If Blue would use Rule 3 to pass $\{W_1,\dots, W_{q+1}\}$ to White in game state $(G,B)$ and $W_1,\cdots, W_{q+1}$ are not white components in game state $(G-e,B)$, then without loss of generality, $V(W_{q+1})= V(W_v)\cup V(W_w)$ where $W_v$ and  $W_w$ are connected white components in $(G-e,B)$. 
In this case, Blue will use Rule 3 to pass either $\{W_1,\dots,W_q, W_{v}\}$ or $\{W_1,\dots,W_q, W_{w}\}$ to White in game state $(G-e,B)$, which ever option guarantees that Blue will be able to turn a vertex blue.
\end{enumerate}
The fact that Adaptations 1 through 4 work is evident. 
However, Adaptation 5 requires more thought. 
Suppose that Blue would use Rule 3 to pass $\{W_1,\dots, W_{q+1}\}$ to White in game state $(G,B)$ and $W_1,\cdots, W_{q+1}$ are not white components of in game state $(G-e,B)$. 
Only one white component from $G$ can be affected by deleting $e$.
Without loss of generality, suppose that $W_{q+1}$ is affected by deleting $e$, and therefore, $e\in E(W_{q+1})$ and $W_{q+1}$ is split into two components in $G-e$ where one component contains $v$ and the other contains $w$; Let these components be $W_w$ and $W_v$. 

It remains to be shown that Blue can pass one of $\{W_1,\dots,W_q, W_{v}\}$ or $\{W_1,\dots,W_q, W_{w}\}$ to White and obtain a free force. 
Suppose that Blue does not win a free force by passing $\{W_1,\dots,W_q, W_{v}, W_w\}$. 
This implies that White can pass back some set of components $W^v$ which (without loss of generality) contains $W_v$ but not $W_w$, and $W$ does not concede a force to Blue.
This implies that no vertex in $B$ has exactly one white neighbor in $G[B\cup \bigcup_{W\in W^v}W]$.
For the sake of contradiction, suppose that Blue does not win a free force by passing $\{W_1,\dots,W_q, W_{w}\}$ to White. 
This implies that White can pass back a set $W^w$ which contains $W_w$ and does not concede a force to Blue. 
Therefore, no vertex in $B$ has exactly one white neighbor in $G[B\cup \bigcup_{W\in W^w}W]$.
In this case, no vertex in $B$ has exactly one white neighbor in $G[B\cup \bigcup_{W\in W^v\cup W^w}W]$ which contradicts the fact that Blue can win a force by passing $\{W_1,\dots, W_{q+1}\}$ to White in game state $(G,B)$.
Thus, Blue can win a free force by passing $\{W_1,\dots,W_q, W_{w}\}$ to White in game state $(G-e,B)$.

To finish proving the upper bound, notice that Blue will spend at most $\Z(G)+1$ token to turn $G-e$ blue. 
This follows from the fact that Blue's adapted strategy will follow the strategy for $G$ almost exactly. 
In fact, the only adaptation that may demand that Blue spend a token in $G-e$ where a token was not spent in $G$ is Adaptation 3 (Adaptation 3 may be invoked as a substep of either Adaptation 4 or 5, depending on how White responds in Rule 3). 
However, Adaptation 3 can be invoked at most one time, since after the first time Adaptation 3 is invoked, both $v$ and $w$ will be blue. 
\end{proof}

Notice that the upper bound of Theorem \ref{thm.edge.deletion} is tight by inspecting the path graph $P_n$. 

\section{Trees and Forests of Stars}\label{sec.treesForests}

First, we will consider how $\Z_q$ behaves on forests of stars. 
This will get the reader acquainted to our notion of active vertices, which represent an opportunity for Blue to use the novel Rule 3. 
After discussing stars, we move to proving an upper bound for $\Z_q(T)$ where $T$ is a tree. 
\subsection{Stars}
Let $S_n$ denote the star graph on $n$ vertices.

\begin{prop}\label{prop.stars}
Let $G$ be a forest graph of $k$ stars, where $ G = \bigcup_{i=1}^{k}S_{n_i}$ such that $n_1\geq n_2 \geq \ldots \geq n_k$. 
Then,\\ 
\[\Z_q(G) = \begin{cases}
k-q + \sum_{i = 1}^{q}\Z(S_{n_i}) & q < k \\
\sum_{i =1}^{k}\Z(S_{n_i}) & q \geq k
\end{cases}
.\]
\end{prop}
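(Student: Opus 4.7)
The plan is to split into the cases $q \geq k$ and $q < k$ and to prove a matching upper and lower bound in each. All upper bounds will come from explicit Blue strategies, while the lower bounds will come from adversarial White strategies together with the basic observation that in a forest of stars any force (by Rule 2 or Rule 3) of a vertex in $S_{n_i}$ requires a blue vertex to already lie in $S_{n_i}$, so Blue must use at least one Rule 1 token in each star of $G$.

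For the case $q \geq k$, the upper bound $\Z_q(G) \leq \sum_{i=1}^k \Z(S_{n_i})$ is immediate from the inequality $\Z_q(G) \leq \Z(G)$ and the fact that $\Z$ is additive over connected components. For the lower bound I want to show that Rule 3 is completely neutralized. At every stage of the game, the only vertices with at least two white neighbors in a forest of stars are blue centers of stars, so the number of active vertices is at most $k \leq q$. Whenever Blue selects $\geq q+1$ white components, a pigeonhole argument forces some active center $c$ to own at least two of the selected components; White returns exactly those components and nothing else, so $c$ has $\geq 2$ white neighbors in the induced subgraph and every other blue center has $0$ white neighbors, and no Rule 2 force is available. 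Thus Rule 3 can never produce a free force, Blue is reduced to standard zero forcing on each star, and $\Z_q(G) \geq \Z(G) = \sum_{i=1}^k \Z(S_{n_i})$.

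For the case $q < k$, the upper bound uses the Blue strategy of initially coloring one leaf in each of the $k$ stars. Rule 2 then forces every center, and every star with $n_i \leq 3$ finishes immediately. On the remaining active stars Blue runs Rule 3 by sending one leaf from each of $q+1$ currently active stars; an adversarial White returns exactly one component per round and directs the force toward the smallest active star in order to finish it and shrink the active count. After a star of size $n_i$ has absorbed $n_i - 3$ Rule 3 forces it has exactly one white leaf and Rule 2 finishes it, dropping the active count by one. When the active count reaches $q$, Rule 3 stalls, and at that point the $q$ largest stars remain active with $n_i - 2$ white leaves each; Blue spends $n_i - 3$ additional Rule 1 tokens on each such star to reduce it to one white leaf so that Rule 2 completes the forcing. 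The total token count is $k + \sum_{i=1}^q (n_i - 3) = k - q + \sum_{i=1}^q \Z(S_{n_i})$.

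The matching lower bound for $q < k$ is the main obstacle. I will construct a White strategy that designates the $q$ largest stars as \emph{priority} and guarantees that no priority leaf is ever forced via Rule 3. When Blue makes a Rule 3 selection, if it contains at least one non-priority component then White returns exactly one such component, allowing Blue to force inside a non-priority star while shielding priority. If Blue's selection lies entirely in priority stars, then since there are only $q$ priority stars but $\geq q+1$ selected components, pigeonhole gives a priority center with $\geq 2$ selected components and White returns exactly those to stall, exactly as in the $q \geq k$ case. Under this White strategy, Blue may only blue a vertex of a priority star through Rule 1 or Rule 2 within that star, which is standard zero forcing and costs at least $\Z(S_{n_i})$ tokens per priority star; combined with the one-token-per-star requirement on each of the $k-q$ non-priority stars, this yields $\Z_q(G) \geq k - q + \sum_{i=1}^q \Z(S_{n_i})$. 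The careful technical points are to verify that the pigeonhole stall is always available whenever Blue's selection is all-priority, that White's responses remain nonempty as required by Rule 3, and that the priority-protection strategy stays consistent as the non-priority stars gradually finish and fewer non-priority components become available.
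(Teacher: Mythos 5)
Your proposal is correct and follows essentially the same route as the paper: the upper bound via one initial token per star followed by free Rule 3 forces until at most $q$ stars remain active, and the lower bound via a White strategy that shields the $q$ largest stars and uses pigeonhole to return two singleton leaf-components of a single blue center whenever Blue's selection lies entirely inside the shielded stars. Your only real departures are cosmetic: handling $q\geq k$ by $\Z_q\leq\Z$ plus additivity, and stating the shielded set as the $\min\{k,q\}$ largest stars (which the paper writes, apparently as a typo, with $\max$).
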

\begin{proof}
Let $G$ be a forest graph with $k$ stars as defined in the proposition.
To prove the equality, we will prove that the right side of the statement is both an upper bound and a lower bound for $\Z_q(G)$.

We will establish an upper bound by describing a game that is possible under any play by White. 

\begin{enumerate}[label=Phase\,\arabic*:, wide=0pt, leftmargin=*]
    \item First, Blue spends 1 token on an arbitrary leaf of each of the $k$ stars, and then uses Rule 2 to force the adjacent center of each of these leaves blue.
    \item Next, there will be some number of stars that have blue centers with multiple white neighbors. 
    As a result of Lemma \ref{force_lemma}, Blue will be able to repeatedly use Rule 3 to force vertices blue until there are $q$ or fewer blue vertices with two or more white neighbors. After this, there will be $m=\min\{k, q\}$ stars that still have white vertices. 
    \item We will consider the worst case scenario in which case the remaining stars are the $m$ stars with the most vertices ($S_{n_1}, S_{n_2}, ... S_{n_m}$) and that the only blue vertices on each of these stars are the two that were colored blue in the first step of the game.
    For each of the remaining stars, Blue spends $\Z(S_{n_i})-1$ tokens to turn $S_{n_i}$ blue, since one token was already spent on an arbitrary leaf.
\end{enumerate}
After all of this, Blue will have spent $k$ tokens at the start of the game, and at most $Z(S_{n_i})-1$ tokens on each of $m$ stars remaining after the repeated use of Rule 3.
Thus, we have an upper bound: 
\[\Z_q(G) \leq \begin{cases}
k + \sum_{i = 1}^{q}\Z(S_{n_i})-1 & q < k \\
k + \sum_{i =1}^{k}\Z(S_{n_i})-1 & q \geq k
\end{cases}
.\]

Now, we will establish a lower bound by describing a strategy for White.
Notice that any strategy for White must outline how White will select white components to pass back to Blue when Rule 3 is invoked.
Suppose Blue passes $W=\{W_1,\dots,W_{q+1}\}$ to White, and let $m= \max\{k,q\}$.
Here are the options White has in this strategy:

\begin{enumerate}[label=Option\,\arabic*:, wide=0pt, leftmargin=*]
    \item If all components in $W$ are contained in $\bigcup_{1\leq i \leq m} V(S_{n_i})$, then by the Pigeon Hole Principle, some $V(S_{n_i})$ contains at least two components. 
    White will pass these two components back to Blue.
    \item  If some component $W_i$ is not contained in $\bigcup_{1\leq i \leq m} V(S_{n_i})$, then White will pass back $W_i$.

\end{enumerate}
When White uses this strategy, Blue will be unable to use Rule 3 to turn vertices inside $\bigcup_{1\leq i \leq m} V(S_{n_i})$ blue. 
Therefore, Blue will have to spend $Z(S_{n_i})$ on each of the stars $S_{n_1},\dots, S_{n_{m}}$.
Furthermore, Blue must spend at least one token to turn a vertex in $S_{n_i}$ blue for each $i\geq m$. 
Therefore, when White uses this strategy, White will be able to force Blue to spend:
\[\Z_q(G) \geq \begin{cases}
k-q + \sum_{i = 1}^{q}\Z(S_{n_i}) & q < k \\
\sum_{i =1}^{k}\Z(S_{n_i}) & q \geq k
\end{cases}
.\]
Since we have matching upper and lower bounds, the proof is done.
\end{proof}

\begin{rem}
Notice that the lower bound in Proposition \ref{prop.stars} can also be obtained by considering matrices with an appropriate number of negative eigenvalues. 
Let $A_{n}$ be the adjacency matrix of $S_n$ (star with $n$ leaves), and let $\oplus$ denote the direct sum of matrices. 
When $q<k$, notice that
\[\bigoplus_{1\leq i \leq q}A_{n_i}\oplus \bigoplus_{q<i\leq k} 
(A_{n_i}+\sqrt{n_i} I_{n_i})\]
has $q$ negative eigenvalues and nullity $\sum_{1\leq i \leq q} (n_i-1)+k-q$.
When $q\geq k$, notice that 
\[\bigoplus_{1\leq i \leq k} A_{n_i}\]
has $k$ negative eigenvalues, and nullity $\sum_{1\leq i \leq k} (n_i-1)$. 
Furthermore, both of these examples show that $\M_q$ and $\Z_q$ agree on forests of stars.
\end{rem}

\subsection{Trees} \label{sec.trees}

We consider a  variant of the $\Z_q$-forcing game, which we denote as $\Z_q^* $, where the following rule replaces the standard third rule:  
\begin{enumerate}[label=Rule\,\arabic*$^*$:, wide=0pt, leftmargin=*]
\setcounter{enumi}{2}
    \item Whenever the activity of the graph is at least $q+1$, Blue can force  White to choose blue-adjacent white vertices to color blue for free until the activity of the graph is at most $q$.
\end{enumerate}
We refer to this rule as the \textit{deactivation rule}. 

Note that because of Lemma \ref{force_lemma}, the deactivation rule acts as a weaker version of the original Rule 3 for Blue.
If Blue has at least $q+1$ active vertices, then Blue would be able to perform forces for free until the activity of the graph decreases to at most $q$.
The deactivation rule simply lets White decide which vertices will turn blue.
Hence, an upper bound on $\Z^* _q$ is an upper bound on $
\Z_q$.

\begin{lem}\label{lem.force.height}
Suppose $G$ is a forest with $k$ components such that each component has  exactly $1$ blue vertex. Let the blue vertices in $G$ be $B=\{b_1,b_2,\dots, b_k\}$ with $\deg(b_1)\geq \deg(b_2)\geq \cdots\geq \deg(b_k)$. 
Then Blue can turn the neighborhood of $B$ blue for at most \[\sum_{1\leq i \leq m} \deg(b_i)-1\] tokens where $m=\max\{k,q\}$.
\end{lem}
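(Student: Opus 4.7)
The plan is to give Blue an explicit strategy that turns $N(B)$ blue at the claimed token cost, using (optionally) the deactivation rule Rule~3$^*$ from Section~\ref{sec.trees}, since any upper bound under Rule~3$^*$ is also an upper bound for $\Z_q(G,B)$.

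The simplest strategy already suffices for the stated bound and uses only Rule~1 and Rule~2. For each blue vertex $b_i$, Blue uses Rule~1 to color $\deg(b_i)-1$ of $b_i$'s neighbors blue, leaving $b_i$ with exactly one white neighbor; Rule~2 then forces that last white neighbor. Because $G$ is a forest and each component contains exactly one blue vertex, the sets $N(b_i)$ are pairwise disjoint, so these moves leave all of $N(B)$ blue. The total spending is $\sum_{i=1}^{k}(\deg(b_i)-1)$, which is bounded above by $\sum_{1\leq i\leq m}\deg(b_i)-1$ for any reasonable interpretation of $m$, since $k\geq 1$ (and for $m>k$ the extra terms contribute zero).

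A tighter argument exploits Rule~3 when $k\geq q+1$. Blue invokes Rule~3$^*$, forcing White to color blue-adjacent white vertices until the activity of $G$ drops to at most $q$. Deactivating a single $b_j$ requires coloring at least $\deg(b_j)-1$ of its neighbors, so an adversarial White does the minimum work and picks the $k-q$ smallest-degree vertices $b_{q+1},\dots,b_k$ to deactivate. Each of these then has exactly one remaining white neighbor, which Blue immediately forces via Rule~2, so their neighborhoods become entirely blue with no tokens spent. Blue finishes the remaining top-$q$ active vertices with the simple strategy above, at a total cost of $\sum_{i=1}^{q}(\deg(b_i)-1)$, which is again at most the bound in the statement.

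The step I expect to require the most care is justifying White's adversarial choice during Rule~3$^*$. I would handle it with a short exchange argument: swapping a high-degree deactivation for a low-degree one both decreases the amount of free coloring White must perform and leaves the more-expensive-to-finish $b_j$'s active, strictly hurting Blue; any over-coloring by White only reduces Blue's subsequent bill. A secondary subtlety is that if the components of $G$ are non-star trees, then coloring some $w\in N(b_j)$ might create a new active vertex two hops from $B$. This does not affect the conclusion, however, because the lemma's target is $N(B)$ and Blue's token count is measured only for spending on $N(B)$; the exchange argument still identifies the ``deactivate the smallest $k-q$'' strategy as the worst case and yields the stated bound.
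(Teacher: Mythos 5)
Your second argument---free forces via the deactivation rule until at most $q$ of the $b_i$ remain active, then paying at most $\deg(b_i)-1$ to finish each of the (worst-case, largest-degree) survivors---is essentially the paper's own proof, and it is the argument that matters, since the $m=\max\{k,q\}$ in the statement is surely a typo for $\min\{k,q\}$ (compare Proposition~\ref{prop.stars} and the way this lemma is invoked in Theorem~\ref{thm.tree.upper}). Be aware, though, that your first paragraph's claim that the naive $\sum_{i=1}^{k}(\deg(b_i)-1)$ strategy meets the bound ``for any reasonable interpretation of $m$'' is false under that intended $\min$ reading when $q<k$; only the Rule~3$^*$ phase buys the improvement, so your proof should lead with it rather than present it as an optional refinement.
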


\begin{proof}
Blue can use Rule 3 to turn a neighbor of $B$ blue as long as there are at least $q+1$ active vertices in $B$. 
Once there are at most $q$ active vertices in $G$, Blue can spend at most \[\sum_{1\leq i \leq m} \deg(b_i)-1\] tokens to deactivate all the active vertices in $B$.
Finally, when there are no active vertices in $B$, Blue can use Rule 2 to turn any white neighbors of $B$ blue.
\end{proof}

If $T$ is a tree at rooted at $v$, we say that a vertex $u$ is at height $i$ where $i$ is the distance from $u$ to $v$. 
The height of $T$ rooted at $v$ is the maximum height of any vertex in $T$.

\begin{thm}\label{thm.tree.upper}
Let $T$ be a tree on $n\geq 2$ vertices.
Let $h=h(T,v)$ be the height of $T$ rooted at $v$, and $L_\ell$ is the set of at most $q$ vertices with the largest degrees at height $\ell$.
Then 
\[\Z_q(T)\leq  \min_{v \in V(T)}\left\{ \deg(v) +  \sum_{\ell = 1}^{h-1}\sum_{a\in L_\ell}\max\{0, \deg(a)-2\}\right\}.\]
\end{thm}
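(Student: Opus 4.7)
The plan is to exhibit an explicit strategy for Blue in the modified game $\Z_q^*$ (using the deactivation rule Rule 3$^*$) and bound its cost by the right-hand side; since $\Z_q(T) \leq \Z_q^*(T)$, this suffices. Fix a vertex $v \in V(T)$ attaining the minimum and root $T$ at $v$. Blue's strategy consists of a setup phase followed by one phase per level of the rooted tree.

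In phase 0, Blue spends $\deg(v)$ tokens to blue $v$ together with $\deg(v)-1$ of its neighbors; Rule 2 then blues the last neighbor for free, since $v$ has exactly one remaining white neighbor. Hence after phase 0, all vertices at levels 0 and 1 are blue. For each subsequent phase $\ell = 1, \dots, h-1$, Blue first applies Rule 2 exhaustively, which cascades through any chain of degree-2 vertices descending from the current blue set. Next, if the activity is at least $q+1$, Blue invokes Rule 3$^*$ until the activity drops to at most $q$. Finally, for each remaining active vertex $a$ at level $\ell$, Blue spends $\deg(a)-2$ tokens to blue all but one of $a$'s white children, and Rule 2 forces the last. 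This deactivates $a$ and colors all of $a$'s children at level $\ell+1$ blue.

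The key claim is that the tokens paid in phase $\ell$ are bounded by $\sum_{a \in L_\ell} \max\{0, \deg(a) - 2\}$. The justification is as follows: a blue vertex $a$ at level $\ell$ can be active in phase $\ell$ only if it has at least two white children (its parent at level $\ell-1$ is already blue), which forces $\deg(a) \geq 3$. After Rule 3$^*$, at most $q$ such vertices remain active at level $\ell$, and each contributes $\deg(a)-2$ to Blue's cost via the deactivation step. Since $L_\ell$ collects the top-$q$ degree vertices at level $\ell$, any sum of at most $q$ values of the form $\deg(a)-2$ taken over vertices at level $\ell$ with $\deg(a)\geq 3$ is dominated by $\sum_{a \in L_\ell} \max\{0, \deg(a)-2\}$. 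Summing over $\ell$ and adding the $\deg(v)$ from phase 0 gives the stated upper bound.

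The main obstacle is the interplay between Rule 3$^*$ and the level structure, since Rule 3$^*$ reduces total activity across the graph but we wish to account for paid actives level-by-level. The cleanest resolution is to maintain an invariant stating that after phase $\ell$ all vertices at levels $\leq \ell+1$ are blue and any remaining activity is confined to levels $\geq \ell+1$; this ensures that the active vertices processed in phase $\ell$ all sit at level $\ell$ and fall within the accounting of $L_\ell$. Verifying this invariant requires checking that Blue's level-$\ell$ payments, Rule 2 cascades, and Rule 3$^*$ deactivations correctly advance the blue frontier without leaving earlier-level stragglers; adversarial White in Rule 3$^*$ can at worst preserve the top-$q$ high-degree actives at level $\ell$, which is precisely why the bound features $L_\ell$.
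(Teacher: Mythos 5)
Your proposal is correct and follows essentially the same route as the paper: both play the $\Z_q^*$ game, root $T$ at $v$, spend $\deg(v)$ tokens on the root's closed neighborhood, and then advance level by level, using Rule 3$^*$ to cut the active vertices down to at most $q$ and paying $\deg(a)-2$ for each survivor at the current level, which is dominated by the $L_\ell$ sum. The paper packages the per-level step as Lemma~\ref{lem.force.height} while you inline it and make the blue-frontier invariant explicit, but the strategy and accounting are the same.
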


\begin{proof}

To prove the desired upper bound, we exhibit a strategy for Blue in the $\Z^*_q$-forcing game.
Suppose $T$ is rooted at vertex $v$.
Consider the following strategy for Blue:
Blue will only consider the lowest height $\ell$ which contains a white vertex. 
\begin{enumerate}[label=Option\,\arabic*:, wide=0pt, leftmargin=*]
\item If $v$ is white, Blue will spend a token to turn $v$ blue.
\item If $v$ has $d$ white neighbors, Blue will spend $d-1$ tokens to turn all neighbors of $v$ blue.
\item If height $\ell\geq 2$ has a white vertex, then Blue will spend at most \[\sum_{a\in L_{\ell-1}}\max\{0, \deg(a)-2\}\] tokens to turn all vertices at height $\ell$ blue. 
This can be done by Lemma~\ref{lem.force.height}.
\end{enumerate}
Options 1 and 2 will cost Blue a combined $\deg(v)$ tokens.
Furthermore, Option 3 will be exercised at most once for each height $2\leq i\leq h$.
 Hence, 
\[
\Z_q^* (T) \leq \deg(v) + \sum\limits_{l=1}^{h-1} \sum\limits_{a\in L_{\ell} } \mathop{\max} \{ 0,\deg(a)-2\}
.\]
As $v$ was an arbitrary root, the theorem is established. 
\end{proof}

Notice that Theorem \ref{thm.tree.upper} appears to be tight for complete $k$-ary trees where $k\geq q+1$. 
The main feature of a complete $k$-ary tree is that there is a root which minimizes the height of the tree, and each level of the rooted tree contains at least $q$ vertices. 
This suggests that ``path like" trees that do not have a nice root may have a lower $\Z_q$-forcing number.

\section{Caterpillar Cycles}\label{sec.cat}

Recall that we defined $C_{n,k}$ to be some graph obtained from a cycle $C_n$ by adding at least $2$ and at most $k$ pendant vertices to each vertex in $C_n$. 
We refer to the  vertices in $C_{n,k}$ with degree greater than one as \emph{centers}.

The \emph{corona of two graphs} $G_1\circ G_2$ is obtained by creating $|V(G_1)|$ copies of $G_2$ and joining the $i^{th}$ copy of $G_2$ to the $i^{th}$ vertex of $G_1$. In the case where $G_2=kK_1$, we call $G_1\circ kK_1$ a $k$-corona of $G_1$. 
The corona operation appears in the study of skew zero forcing, were it is useful for characterizing low skew throttling numbers \cite{curl2020skew}. 
%The Laplacian and Adjacency spectrum of the corona of two graphs has also been studied \cite{barik2007spectrum}.
Our graph $C_{n,k}$ can be thought of as an irregular corona between $C_n$ and $K_1$.
In fact, we encourage the reader to treat $C_{n,k}$ as $C_n\circ kK_1$ on the first pass.

Our interest in $C_{n,k}$ is motivated by a few ideas. 
First, caterpillar graphs (of which $P_n\circ kK_1$ is an example) seem to have a much lower $q$-forcing number than the bound in Theorem \ref{thm.tree.upper}. 
Second, Blue's first few moves are easier to analyze on $C_{n,k}$ than $P_{n,k}$ because  $C_{n,k}$ does not have end points. 
Since $C_{n,k}$ and $P_{n,k}$ differ by a single edge, we don't mind ``relaxing" our study of trees to a simple uni-cyclic graph. 
Finally, $q$-forcing has not been studied for uni-cyclic graphs.% even though uni-cyclic graphs have some interesting results in the zero forcing literature (see \cite{xuezhong2005nullity}).

\begin{obs}
The only vertices in $C_{n,k}$ that can be active are the center vertices.
%as the other vertices have degree $1$.
\end{obs}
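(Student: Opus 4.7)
The plan is to prove this observation by a short degree-counting argument, relying on the fact that a vertex needs at least two neighbors in order to have two or more white neighbors.

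First I would unpack the definitions. The graph $C_{n,k}$ consists of a cycle $C_n$ together with some pendant vertices attached at each cycle vertex. The non-center vertices are exactly those of degree $1$, i.e.\ the pendants, while the center vertices are the cycle vertices themselves (each of which has degree at least $2+2=4$, counting two cycle neighbors and at least two pendants). Recall from the earlier discussion that an active vertex is defined to be a blue vertex with at least two white neighbors.

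Next I would simply note that if $v$ is a non-center vertex of $C_{n,k}$, then $\deg(v)=1$, so $v$ has exactly one neighbor in $G$. In particular, the number of white neighbors of $v$ is at most $1$, which is strictly less than $2$. Therefore $v$ cannot be active, regardless of the coloring. Equivalently, any active vertex must have degree at least $2$, and in $C_{n,k}$ those are precisely the center vertices. This completes the argument.

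There is no real obstacle here — the statement is essentially a definitional remark, and the only thing to verify is that the pendant vertices are the only degree-$1$ vertices, which is immediate from the construction of $C_{n,k}$.
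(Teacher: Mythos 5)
Your argument is correct and is exactly the intended justification: the paper states this as an observation without proof precisely because, as you note, a vertex needs at least two neighbors to have two white neighbors, and the only vertices of degree at least two in $C_{n,k}$ are the centers (which the paper in fact \emph{defines} as the vertices of degree greater than one). Nothing further is needed.
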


\begin{prop}
For $C_{n}\circ kK_1$ with $n\geq 3$ and $k\geq 1$,
 \[\Z_1(C_{n}\circ kK_1) = k+1.\]
\end{prop}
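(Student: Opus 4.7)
The plan is to prove $\Z_1(C_n \circ kK_1) = k+1$ by establishing matching upper and lower bounds.

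For the upper bound $\Z_1(C_n \circ kK_1) \leq k+1$, I would exhibit a Blue strategy in three phases using $k+1$ tokens total. In Phase~1, Blue spends $2$ tokens coloring two adjacent cycle centers $c_1$ and $c_2$ blue, making both active: each has $k$ white leaves plus (when $n \geq 4$) a white cycle neighbor, so Rule~3 is available by Lemma~\ref{force_lemma}. In Phase~2, Blue repeatedly invokes Rule~3, sending a pair of components consisting of a single white leaf of an active center together with the large white component that contains the remaining cycle (or another single leaf, when $n=3$). A case analysis on White's three possible responses (return only the leaf, return only the large component, or return both) shows that in each case at least one vertex is forced. Iterating this until exactly one center $c^{*}$ remains with all $k$ of its leaves white and no other active vertex, Blue then enters Phase~3: spend $k-1$ tokens coloring $k-1$ of $c^{*}$'s leaves, after which Rule~2 forces the final leaf. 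The total token count is $2 + (k-1) = k+1$.

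For the lower bound $\Z_1(C_n \circ kK_1) \geq k+1$, I would exhibit an adversarial strategy for White. The crucial structural fact is that every leaf in $C_n \circ kK_1$ has a unique neighbor (its center), so a leaf becomes blue only via Rule~1 or by its center forcing it; moreover, Rule~3 requires at least two active vertices. White's plan is to respond to each Rule~3 invocation by preserving a reserve of white leaves on some designated center $c^{*}$, returning components that do not force $c^{*}$'s leaves whenever possible. Tracking a potential function such as the maximum number of white leaves across all centers, I would argue that White can keep this potential at $k$ until the last phase of the game, at which point only $c^{*}$ is active and Rule~3 is unavailable. Blue is then forced to spend $k-1$ additional Rule~1 tokens on $c^{*}$'s leaves (with Rule~2 finishing the last one). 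Combined with the initial $\geq 2$ tokens required before Rule~3 can ever be invoked, this gives $\Z_1 \geq k+1$.

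The main obstacle will be verifying that each strategy succeeds against adversarial play. For the upper bound, the Phase~2 case analysis must confirm that Blue always makes progress, with the delicate issue being that some responses by White only force a single leaf rather than advancing the cycle; one must check that this still leads to the stuck configuration after finitely many rounds. For the lower bound, the central difficulty is formally constructing White's strategy so that Blue cannot circumvent the $k-1$ end-game cost by a mid-game token allocation---for example, by spending tokens on leaves of $c^{*}$ earlier and redirecting the game so a different center ends up stuck. This will require a careful bookkeeping argument tying token expenditure to the structure of active vertices and the evolution of the potential function.
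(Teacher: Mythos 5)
Your overall architecture matches the paper's: an upper bound via two initial tokens on (leaves of) adjacent centers followed by repeated Rule~3 sweeps and a $(k-1)$-token endgame, and a lower bound via a White strategy that guards the leaves of one designated center. However, both halves are left as plans at exactly the points where the work lies, and the lower bound in particular is missing the device that makes the accounting go through. Your potential function (``maximum number of white leaves over all centers'') does not by itself yield the bound: Blue can lower it by spending tokens on $c^{*}$'s leaves mid-game, and you correctly flag but do not resolve the worry that these expenditures might be double-counted against the initial two. The paper's resolution is to let White choose $c_{0}$ \emph{after} Blue's second token, specifically as a white center none of whose leaves are yet blue; then the two opening tokens are provably disjoint from $c_{0}$'s leaves, and the argument reduces to showing that no leaf of $c_{0}$ can ever be colored for free. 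That last claim follows because Rule~2 can reach a leaf of $c_{0}$ only after $k-1$ of its siblings are blue, and White's three-case response rule (return a component avoiding $c_{0}$'s leaves if one exists; if both offered components are leaves of $c_{0}$, return both so $c_{0}$ retains two white neighbors) blocks Rule~3. You need some version of this to close the lower bound.

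On the upper bound, your Phase~2 termination claim (``iterate until exactly one center remains with all its leaves white'') also needs the structural lemma the paper supplies: if neither Rule~2 nor Rule~3 is available, then every blue center has $0$ or $\geq 2$ white neighbors and at most one center is active, whence \emph{every} center must already be blue (otherwise the two blue centers bounding a maximal white arc would both be active, giving two active vertices and hence a Rule~3 force). Without that argument it is not clear that White's choices cannot strand Blue with two or more centers still owning white leaves, which would cost more than $k-1$ tokens to finish. One further caution: you invoke Lemma~\ref{force_lemma} on $C_{n}\circ kK_{1}$, which is not a forest; you should either note that after the two adjacent centers are blue the connecting edge can be discarded by Observation~\ref{edge_lemma}, or verify directly (as your component choice of ``one leaf plus the long arc'' in fact allows) that every White response concedes a force.
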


\begin{proof}
Let $c_0,\dots, c_{n-1}$ denote the center vertices on the cycle in $C_{n}\circ kK_1$. To prove the upper bound, consider the following strategy for Blue:
\begin{enumerate}[label=Option\,\arabic*:, wide=0pt, leftmargin=*]
\item Apply Rule 2, if possible.
\item Apply Lemma \ref{force_lemma} and Rule 3, if possible. 
\item If there are fewer than 2 active vertices, then spend a token to turn a degree $1$ vertex with a white neighbor blue. 
\end{enumerate}

Clearly, the game will open with Blue spending $2$ tokens on leaves, which will then force their respective centers using Rule 2. 
We claim that the game will proceed to a state in which all center vertices are blue and at most one center vertex has white neighbors without Blue spending more tokens.

To prove the claim, suppose that Options $1$ and $2$ cannot be invoked. 
Since Option 1 cannot be invoked, all blue centers have $0$ or at least $2$ white neighbors.
Since Option 2 cannot be invoked, at most one blue center vertex has at least $2$ white neighbors. 

For the sake of contradiction, suppose that there is a white center vertex $c_0$.
Since the graph has at least two blue centers, there are distinct indices $i,j$ such that $c_i,c_j$ are blue and $c_k$ is white for $0< k < i$ and $j<k <n$.
Because both $c_i$ and $c_j$ have at least $1$ white neighbor and all blue centers have $0$ or $2$ white neighbors, it follows that $c_i$ and $c_j$ each have at least $2$ white neighbors. 
However, we have already stated that at most one blue center vertex has at least $2$ white neighbors, so we have arrived at a contradiction. 
Therefore, we will proceed under the assumption that all center vertices are blue, and at most one center vertex has white neighbors of degree $1$. 

Without loss of generality, suppose $c_0$ is the only center vertex with white neighbors. 
In the worst case scenario, $c_0$ has $k$ white neighbors, and Blue will be forced to spend $k-1$ tokens to finish the game. 
This gives $\Z_1(C_{n}\circ kK_1)\leq k+1$. 

We will now turn our attention to the lower bound. 
Observe that any iteration of the $Z_1$-forcing game on $C_{n}\circ kK_1$ will start with Blue spending at least $2$ tokens in the first three turns. 
After Blue spends their second token, White will use the following strategy: Select a white center with no blue degree $1$ neighbors, say $c_0$. 
If Blue invokes Rule 3 and passes $W=\{W_1,W_2\}$ to White, then White has three options in their strategy.
\begin{enumerate}[label=Option\,\arabic*:, wide=0pt, leftmargin=*]
\item If $c_0$ is white, then without loss of generality, $c_0\notin W_2$ and White will pass $W_2$ back to Blue.
\item If $c_0$ is blue where both $W_1$ and $W_2$ consist of degree $1$ vertices adjacent to $c_0$, then White will pass back both $W_1$ and $W_2$.
\item If $c_0$ is blue and, without loss of generality, $W_1$ does not contain any degree $1$ neighbors of $c_0$, then White will pass $W_1$ back to Blue. 
\end{enumerate}
Notice that as long as White follows this strategy, Blue will not be able to color any of the degree $1$ neighbors of $c_0$ blue for free without spending at least $k-1$ tokens on degree $1$ neighbors of $c_0$. 
Therefore, Blue must spend at least $k+1$ tokens. This completes the proof. 
\end{proof}

\subsection{The $q=2$ case}
Throughout this section, we will assume that $k\geq 2$. 
 Suppose that $G$ is a graph isomorphic to $C_{n,k}$ with some number of blue vertices. 
 We say that a center $v$ is \emph{bad} if $v$ is blue or $v$ has at least one blue neighbor of degree $1$.
 Centers which are not bad are called \emph{good}. 
 We say a path $P$ of centers $c_1c_2\dots c_m$ is \emph{protected} if every center in $P$ has at least one white neighbor of degree $1$ and $c_1$ and $c_m$ are the only two possible bad centers in $P$. 
 We will allow $c_1=c_m$ when $P$ is the full cycle.
 
 Let $\varphi(G,P)=m-2$ be the number of guaranteed good centers in $P$. 
 Define $\varphi(G)=\max_{P}\{\varphi(G,P)\}$ where $P$ ranges over the protected paths in $G$. 
 
 We will derive a lower bound for $\Z_2(C_{n,k})$ by tracking $\varphi$ as the forcing game unfolds. 
 The key idea is that White has a strategy which forces Blue to spend tokens to break up  protected paths. 
 Here is the strategy we will analyze for White: 
 If $\varphi(G)>0$, White will select a protected path of centers $P$ that realizes $\varphi(G)$. 
 Whenever Blue uses Rule $3$ and passes a set of white components $\{W_1,W_2, W_3\}$ to White, then White will do one of two things:
 \begin{enumerate}[label=Option\,\arabic*:, wide=0pt, leftmargin=*]
\item White basses back a component $W_i$ such that $W_i$ does not contain a vertex in $P$ or a degree $1$ vertex adjacent to a vertex in $P$. 
\item If every component in $\{W_1,W_2, W_3\}$ contains a vertex in $P$ or a degree one vertex adjacent to a vertex in $P$, then White passes back a subset $S\subseteq \{W_1,W_2, W_3\}$ such that Rule 2 cannot be used on $G[B\cup_{W_i\in S} W_i]$. 
\end{enumerate}
As the name suggests, protected paths should be ``protectable" by White.

\textbf{Claim:} If every component in $W=\{W_1,W_2,W_3\}$ contains a vertex in a protected path $P$ or a degree one vertex adjacent to a vertex in $P$, then there exists a subset $S\subseteq \{W_1,W_2,W_3\}$ such that Rule 2 cannot be used on $G[B\cup_{W_i\in S} W_i]$.

\begin{proof}
Suppose $P=c_1\dots c_m$ is a protected path in $G$. 
Without loss of generality, if $W_1$ and $W_2$ are isolated vertices adjacent to the same bad blue center in $P$, then using $S=\{W_1,W_2\}$ proves the claim. 
Therefore, we will assume that no two components in $W$ are isolated vertices adjacent to the same bad blue center of $P$. 
Therefore, without loss of generality, $W_1$ is an isolated white vertex adjacent to $c_1$, $W_2$ is an isolated white vertex adjacent to $c_m$, and $W_3$ contains $c_2,\dots, c_{m-1}$ and their white neighbors. 
In this case, White can pass back $W$ to Blue without conceeding a force, proving the claim. 
\end{proof}
 
Suppose $\{H_i\}^t_{i=0}$ is a sequence of colored copies of $C_{n,k}$ where $H_0$ has all white vertices, $H_t$ has all blue vertices, and $H_i$ follows from $H_{i-1}$ by one application of some $Z_2$ game rule. 
In particular, assume that White plays according to the strategy above in the sequence $\{H_i\}_{i=1}^t$. 
We will also assume that Blue successfully turns a vertex blue with every move so that $H_i$ has exactly one more blue vertex than $H_{i-1}$.

\begin{lem}\label{lem.nodecrease}
If Blue does not spend a token to transition from $H_{i-1}$ to $H_i$, then $\varphi(H_i)=\varphi(H_{i-1})$.
\end{lem}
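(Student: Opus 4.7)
The plan is to prove both inequalities separately. The direction $\varphi(H_i) \le \varphi(H_{i-1})$ holds for any single-step transition, free force or not: if $P$ is protected in $H_i$, then in $H_{i-1}$ every center of $P$ had at least as many white pendants and every interior center of $P$ had no extra blue pendants, so the very same $P$ is protected in $H_{i-1}$ with the same $m-2$ guaranteed good interior centers. Taking the max over protected paths gives $\varphi(H_i) \le \varphi(H_{i-1})$ without invoking White's strategy at all. The substance of the lemma is therefore the reverse inequality, and this is where the free-force hypothesis together with White's prescribed strategy must be used.

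For the reverse direction, I would fix a protected path $P = c_1 c_2 \cdots c_m$ realizing $\varphi(H_{i-1})$; we may assume $\varphi(H_{i-1}) > 0$, so $m \ge 3$. Let $w$ be the vertex turned blue in the transition $H_{i-1} \to H_i$. The argument reduces to showing that $w$ is neither an interior center of $P$ nor a pendant of any center of $P$, because in that case $P$ is still protected in $H_i$ with the same interior, so $\varphi(H_i) \ge m - 2 = \varphi(H_{i-1})$. All that remains is to verify this condition by cases on how Blue earned the free force.

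If Blue used Rule 2 directly, let $v$ be the forcing vertex with unique white neighbor $w$. No interior $c_j$ can be $v$, since interior centers of $P$ are white; nor can $v$ be an endpoint, since $v = c_1$ would give $v$ two distinct white neighbors, namely $c_2$ (interior, hence white) and a guaranteed white pendant of $c_1$, contradicting uniqueness (with $v = c_m$ ruled out symmetrically). Hence $v$ lies outside $P$, which rules out $w$ being an interior $c_j$ (the only non-$P$ neighbors of $c_j$ are its pendants, all white since $c_j$ is good) and also rules out $w$ being a pendant of any $c_i \in V(P)$ (otherwise the unique neighbor $v = c_i$ would be in $P$). If instead Blue obtained the force via Rule 3 at game state $(G, B_{i-1})$, then by the claim stated immediately before the lemma, whenever every component Blue passes intersects $V(P)$ or the pendants of $V(P)$, White's Option 2 denies the force, contradicting the assumption of success. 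So Option 1 triggers: White returns a component $W_j$ disjoint from both $V(P)$ and all pendants of centers in $V(P)$, and hence $w \in W_j$ satisfies the required condition. Combining these cases with the reverse inequality proved first yields the lemma. The main obstacle is keeping the Rule 2 case analysis airtight --- the observation that interior centers of $P$ are white does nearly all the work, but one must still be careful that the force does not quietly remove the last white pendant of an endpoint $c_1$ or $c_m$, which is exactly what the argument ruling out $v \in \{c_1, c_m\}$ prevents.
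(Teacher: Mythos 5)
Your proof is correct and follows essentially the same route as the paper's: show that under White's strategy no free force (Rule 2 or Rule 3 via the preceding Claim) can turn an interior center of the realizing protected path $P$ or a pendant of any center of $P$ blue, so $P$ stays protected with the same count. Your explicit verification of the easy direction $\varphi(H_i)\le\varphi(H_{i-1})$ and the careful ruling out of $v\in\{c_1,c_m\}$ as a forcing vertex are welcome details that the paper leaves implicit, but they do not change the argument.
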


The proof of Lemma \ref{lem.noprogress} in Section \ref{sec:q=3} is similar and more detailed.

\begin{proof}
Assume that $\varphi(H_{i-1})>0$ since otherwise the claim is trivially true. 
Suppose $P=c_1\cdots c_m$ is a protected path in $H_{i-1}$ that realizes $\varphi(H_{i-1})$. 
Notice that Rule 2 cannot be used to turn a good vertex in $P$ blue (other than $c_1,c_m$), nor can Rule 2 to be used to turn a degree 1 neighbor of a vertex in $P$ blue, since the only possible blue vertices in $P$ are  $c_1,c_m$, or degree 1 neighbors of $c_1,c_m$.

Similarly, the claim above shows that Rule $3$ cannot be used to turn a vertex in $P$ or any degree $1$ neighbor of a vertex in $P$ blue. 
This completes the proof. 
\end{proof}

\begin{lem}\label{lem.decrease}
If Blue spends a token to transition from $H_{i-1}$ to $H_i$, then \[\varphi(H_i)\geq \frac{\varphi(H_{i-1})-1}{2}.\]
\end{lem}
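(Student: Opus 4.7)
The plan is to perform a case analysis on the single vertex $v$ that Blue colors via Rule~1 in the transition from $H_{i-1}$ to $H_i$, tracking its effect on a protected path $P = c_1 c_2 \cdots c_m$ in $H_{i-1}$ that realizes $\varphi(H_{i-1}) = m - 2$. We may assume $\varphi(H_{i-1}) \geq 1$; otherwise the claim $\varphi(H_i) \geq 0$ holds trivially by definition of $\varphi$.

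The first case is when $v$ is neither an interior center $c_j$ (with $1 < j < m$) nor a degree-1 neighbor of such an interior center. Then every interior center of $P$ is untouched, so it remains good and retains all $\geq 2$ of its white degree-1 neighbors. One then shows that either $P$ itself is still protected in $H_i$ --- the usual situation, since endpoint centers of a protected path are permitted to be bad --- or else $v$ was the last remaining white degree-1 neighbor of $c_1$ or $c_m$, in which case the truncated path $c_2 \cdots c_m$ (respectively $c_1 \cdots c_{m-1}$) is still protected because its new endpoint was previously good and hence has all of its $\geq 2$ degree-1 neighbors white. Either way $\varphi(H_i) \geq \varphi(H_{i-1}) - 1$, which is strictly stronger than the lemma requires.

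The key case is when $v$ is an interior center $c_j$ or a degree-1 neighbor of such a $c_j$. Since $c_j$ was good in $H_{i-1}$, the vertex $c_j$ itself and all $\geq 2$ of its degree-1 neighbors were white. After Blue's move, $c_j$ has become bad --- either by being colored or by acquiring a blue degree-1 neighbor --- but it still has at least one white degree-1 neighbor, because at most one of its degree-1 leaves was recolored. Every other interior center $c_i$ of $P$ is unaffected and remains good, so the two sub-paths $c_1 \cdots c_j$ and $c_j \cdots c_m$ are both protected paths in $H_i$. Their $\varphi$-values are $j - 2$ and $m - j - 1$, summing to $m - 3 = \varphi(H_{i-1}) - 1$, and taking the larger of the two yields $\varphi(H_i) \geq (\varphi(H_{i-1}) - 1)/2$.

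The main obstacle I foresee is the degenerate case where $P$ is the full cycle with $c_1 = c_m$: recoloring a single interior vertex then does not split $P$ into two pieces but opens it into a single protected path of $m - 1$ centers, yielding $\varphi(H_i) \geq m - 3$, which comfortably exceeds the required bound. Small values of $m$ (for instance $m = 3$, corresponding to $\varphi(H_{i-1}) = 1$) also work out cleanly: both sub-paths then consist of a single center with $\varphi = 0$, matching the right-hand side $(\varphi(H_{i-1}) - 1)/2 = 0$. The standing hypothesis $k \geq 2$ is crucial throughout, since it guarantees that turning one leaf of an interior center blue still leaves at least one white degree-1 neighbor attached, preserving the protected structure needed for the sub-paths.
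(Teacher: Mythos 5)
Your argument is correct and follows essentially the same route as the paper's proof: the newly bad interior center $c_x$ splits $P$ into two protected sub-paths whose good centers sum to $\varphi(H_{i-1})-1$, and the longer one gives the bound. Your treatment is somewhat more careful than the paper's, since you also handle the cases where $P$ merely loses an endpoint's last white leaf and where $P$ is the full cycle, but the core idea is identical.
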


\begin{proof}
Suppose that $P=c_1\dots c_m$ is a protected path in $H_{i-1}$ which realizes $\varphi(H_{i-1})$. 
If $P$ is protected in $H_i$, then we are done. 
Therefore, we will assume that $P$ is not protected in $H_i$. 
In particular, it must be the case that $c_x$ is a bad center in $H_i$ for some $1<x<m$. 
Then $c_1\cdots c_x$ and $c_x\cdots c_m$ are protected paths in $H_i$, the longer of which has at least $\varphi(H_i)\geq \tfrac{\varphi(H_{i-1})-1}{2}$ good centers. 
Thus, the lemma is proven. 
\end{proof}

\begin{lem}\label{lem.q=2.lb}
For $ k\geq 2$, we have $\log_2(n)+1\leq \Z_2(C_{n,k}).$
\end{lem}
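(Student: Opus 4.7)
The plan is to pit an arbitrary winning strategy for Blue against the protected-path strategy for White described above and track $\varphi$ across the resulting game sequence $\{H_i\}_{i=0}^t$, where $H_0$ has every vertex white and $H_t$ has every vertex blue. Let $\tau$ denote the total number of tokens Blue spends. The goal is to show $\tau \geq \log_2(n) + 1$, which implies the claimed lower bound on $\Z_2(C_{n,k})$ since the argument is carried out against a specific White strategy.

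First I would compute $\varphi(H_0)$ directly. Taking $P$ to be the full cycle $c_1 c_2 \cdots c_n c_1$ with $m = n+1$, every center is good (no vertex is yet blue) and each has at least $k \geq 2$ white pendant neighbors, so $P$ is protected and $\varphi(H_0) \geq n - 1$. Next I would examine the state immediately after Blue spends its first token. Whether Blue colors a cycle vertex or a pendant, at most one center becomes bad; rotating the cycle so that this unique bad center is the closing vertex $c_1 = c_m$ preserves the full cycle as a protected path. Hence $\varphi \geq n - 1$ still holds just after the first token move. This is the crucial saving: the first token yields no effective halving of $\varphi$, which is exactly what buys the extra $+1$ beyond the naive $\log_2(n)$ bound.

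Now I would combine Lemmas~\ref{lem.nodecrease} and \ref{lem.decrease}. The former says $\varphi$ is invariant under non-token moves, so the evolution is governed entirely by the $\tau$ token moves. The latter can be rewritten as $\varphi_{\text{new}} + 1 \geq (\varphi_{\text{old}} + 1)/2$, so iterating this inequality through the remaining $\tau - 1$ token moves, starting from $\varphi + 1 \geq n$, yields
\[
\varphi(H_t) + 1 \geq \frac{n}{2^{\tau - 1}}.
\]
Since $H_t$ is entirely blue, no center retains any white pendant, so no protected path exists and $\varphi(H_t) = 0$. Plugging this in gives $2^{\tau - 1} \geq n$, i.e., $\tau \geq \log_2(n) + 1$, as required.

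The main obstacle I foresee is justifying the first-token step carefully: one must verify that a single blue vertex (whether on the cycle or a pendant) produces exactly one bad center, and that a rotated full cycle still satisfies the protected-path definition (the endpoint-bad-center clause when $c_1 = c_m$, plus the requirement that every center in $P$ retains a white degree-$1$ neighbor, which uses $k \geq 2$). The cyclic structure of $C_{n,k}$, as opposed to the path structure of $P_{n,k}$, is what makes this rotation legitimate, and it is precisely this structural feature that is responsible for the additional constant in the lower bound.
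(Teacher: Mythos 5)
Your proposal is correct and follows essentially the same route as the paper: pit Blue against the protected-path strategy, observe the first token still leaves $\varphi\geq n-1$ (the paper's $l_0=n-1$), and then iterate Lemma~\ref{lem.decrease} via Lemma~\ref{lem.nodecrease} until $\varphi$ reaches $0$. The only cosmetic difference is that you solve the recurrence inline with the substitution $\varphi+1$ rather than invoking the appendix lemma, and you spell out the rotation argument for why the full cycle remains protected after the first token, which the paper leaves implicit.
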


\begin{proof}
Observe that Blue must spend $1$ token in the first  turn of the $\Z_2$-forcing game on $C_{n,k}$. 
From this point on we will assume that White follows the protected path strategy. 
Notice that as long as $\varphi(H_i)>0$, there are still white vertices in $H_i$. 
Furthermore, by Lemmas \ref{lem.nodecrease} and \ref{lem.decrease}, Blue can only reduce $\varphi(H_i)$ by spending tokens. 
In particular, if Blue spends $1+i$ tokens to reach $H_j$, then $\varphi(H_j)\geq l_i$ where \[l_i = \frac{l_{i-1}-1}{2}\] and  $l_0=n-1$. 
We solve this recurrence relation in Lemma \ref{lemma:23} which can be found in the Appendix. The result is that $l_i\leq 0$ if and only if $i\geq \log_2(n)$. 
Thus, Blue must spend at least $\log_2(n)+1$ tokens to defeat White.
\end{proof}

We will now analyze the game from Blue's perspective to prove an upper bound. Consider the following strategy for Blue:
Let $c_0,\dots, c_{n-1}$ denote the center vertices on the cycle in $C_{n,k}$. To prove the upper bound, consider the following strategy for Blue.
First, Blue will spend $2$ tokens on leaves of adjacent centers. 
For the rest of the game, Blue has the following options.
\begin{enumerate}[label=Option\,\arabic*:, wide=0pt, leftmargin=*]

\item Apply Rule 2, if possible.
\item Apply Lemma \ref{force_lemma} and Rule 3, if possible. 
\item If there are fewer than 3 active vertices and a protected path $P$ with at least $3$ vertices, then spend a token on a leaf of an internal vertex of $P$.
\item Spend a token on a pendent vertex, if none of the other options are available. 
\end{enumerate}

\begin{lem}\label{lem.2paths}
Suppose that  $G$ has at least $2$ maximal protected paths of length at least 1 (at least 2 vertices in the path) and at least $3$ bad centers in $G$. Then Blue can turn a vertex blue for free. 
\end{lem}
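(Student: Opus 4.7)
My plan is to show that under the hypothesis, Blue either has Rule 2 available immediately or can obtain a free force via Rule 3 by passing three isolated white pendant components to White. The key observation is that every endpoint of a maximal protected path is an active blue center carrying a white pendant, and the hypothesis of $\geq 3$ bad centers forces at least three such endpoints to be distinct.

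First I reduce to the case where Rule 2 cannot be applied, since otherwise Blue is already done for free. Under this reduction a blue pendant adjacent to a white center would itself have exactly one white neighbor and trigger Rule 2; hence every blue pendant is adjacent to a blue center, which in turn forces every bad center to be blue and every blue center to have either $0$ or $\geq 2$ white neighbors. Next I examine the endpoints of the two protected paths $P_1, P_2$. By the definition of a protected path each endpoint is a bad center carrying a white pendant of degree one, so under the reduction each endpoint is a blue center with $\geq 2$ white neighbors and is therefore an active vertex.

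Then I argue that $P_1$ and $P_2$ contribute at least three distinct endpoints. If they shared both endpoints, they would necessarily be the two arcs of the cycle joining the same pair of bad centers; since the interior of a protected path consists only of good centers, the graph would then contain exactly those two bad centers in total, contradicting the hypothesis that $\geq 3$ bad centers exist. The degenerate case in which some $P_i$ is the full cycle (with $c_1 = c_m$) is ruled out for the same reason, as it would leave only one bad center. Hence I obtain three distinct active blue centers $a_1, a_2, a_3$ with white pendant neighbors $p_1, p_2, p_3$. Each $p_i$ is isolated in $G[V\setminus B]$ because its unique neighbor $a_i$ is blue, and the $p_i$ are pairwise distinct since pendants of distinct centers are disjoint, so $\{p_1\}, \{p_2\}, \{p_3\}$ are three distinct white components, meeting the $q+1 = 3$ selection requirement for Rule 3.

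To finish, Blue passes these three singleton components to White. For any non-empty subset $S$ returned by White, the subgraph $G[B \cup \bigcup_{p_j \in S}\{p_j\}]$ contains as white vertices only the selected $p_j$'s, so any $a_i$ with $p_i \in S$ has exactly one white neighbor in this subgraph, namely $p_i$, and Rule 2 fires at no cost. The principal obstacle is the counting step that yields three distinct endpoints: this requires careful use of the cyclic geometry, the fact that endpoints of a maximal protected path are bad (and thus blue under the Rule 2 reduction), and the $\geq 3$ bad centers hypothesis to exclude the configuration in which both $P_1$ and $P_2$ span the two arcs between a single pair of bad endpoints.
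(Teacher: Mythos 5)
Your final step (passing three white-pendant singleton components to White and noting that any nonempty returned subset concedes a Rule~2 force) is sound, and your reduction to the case where Rule~2 is unavailable does correctly show that every bad center is then blue. The gap is in the counting step: you assert that ``by the definition of a protected path each endpoint is a bad center carrying a white pendant,'' but the definition only says the endpoints are the \emph{only two possible} bad centers of $P$ --- they are permitted to be bad, not required to be. Maximality does not rescue the claim: a maximal protected path $c_1\cdots c_m$ can have a good, white endpoint $c_1$ precisely when the adjacent center $c_0$ outside the path has all of its pendants blue, since then $c_0c_1\cdots c_m$ fails to be protected because $c_0$ lacks a white degree-one neighbor, even though $c_1$ is good. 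Concretely, in $C_{8,2}$ with $c_0,c_3,c_6$ and all of their pendants blue and everything else white, there are three bad centers, Rule~2 is unavailable, and the maximal protected paths $c_1c_2$ and $c_4c_5$ have only good, white endpoints; the three blue centers with white pendants that your argument requires do not exist (the lemma still holds there, via the active vertices $c_0,c_3,c_6$, which carry no white pendants at all).

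The paper's proof is built around exactly the dichotomy your argument skips: for an endpoint $c_0$ of a maximal protected path, either $c_0$ is blue (hence active, since it has a white pendant and Rule~2 is unavailable), or the center just outside the path is blue with all pendants blue and is itself active. It then runs a case analysis on how these ``endpoint or just-outside'' active vertices can coincide across the two paths, using the hypothesis of at least three bad centers to extract three distinct active vertices, and closes with Lemma~\ref{force_lemma} rather than an explicit choice of components. To repair your proof you would need to replace ``each endpoint is bad'' with this either/or statement and then redo the distinctness count over the four relevant path ends, at which point you are essentially reconstructing the paper's argument.
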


\begin{proof}
Suppose that $G$ has $2$ maximal protected paths $P_1= c_0\cdots c_m$ and $P_2= c_s\cdots c_t$ where it is possible that $m=s$ or $t=0$. 
Since $G$ has at least $3$ bad centers, it cannot be the case that $m=s$ and $t=0$.
Without loss of generality, $t\neq 0$. 
For the sake of contradiction, suppose that Blue cannot turn a vertex blue for free. 
In particular, Blue cannot invoke Options 1 and 2. 
We will show that there are at least $3$ active vertices, which will be a contradiction by Lemma \ref{force_lemma}. 

Without loss generality, notice that if $c_0$ is white, then $c_0$ has a blue neighbor with degree $1$, or $c_{n-1}$ is blue and all the degree $1$ neighbors of $c_{n-1}$ are blue as well.
Since Blue cannot invoke Option 1, it follows that $c_0$ does not have any blue neighbor with degree 1. 
Therefore, $c_{n-1}$ is blue and all the degree $1$ neighbors of $c_n$ are blue as well. 
Once again, since Blue cannot invoke Option $1$, it follows that $c_{n-2}$ is white. 
Furthermore, $n-1\neq t$ since $c_t$ must have at least one white neighbor of degree $1$ by the definition of protected paths. 
In particular, $c_{n-1}$ is active. 

On the other hand, if $c_0$ is blue, then $c_0$ is active. 
Thus, we can conclude that either $c_0$ or $c_{n-1}$ is active. 

Notice that the same argument can be used to show that one of $c_m,c_{m+1}$ is an active vertex where we know that $c_0,c_{n-1},c_m, c_{m+1}$ are all distinct, and $m+1\neq t$.
This guarantees us 2 active vertices so far. 

We can also use the argument above to show that one of  $c_t,c_{t+1}$ is active. 
However, it is possible that $t+1= n-1$, and we do not increase our count of active vertices. 
Of course, if $t+1\neq n-1$ or $c_0$ is an active vertex, then we have found $3$ active vertices.
Therefore, we will proceed under the assumptions that $t+1=n-1$ and that $c_t,c_0$ are white  as we look for another active vertex.

Notice that if $s=m$, then there are at most two bad centers in $G$, which would contradict our assumption. 
Thus, $s\neq m$.

Now, let us consider the consequences of applying the argument above to conclude that one of $c_s,c_{s-1}$ is active. 
There are three cases: $c_s,c_m$ are both white, exactly one of $c_s,c_m$ is blue, and both $c_s,c_m$ are blue. 

Suppose $c_s$ and $c_m$ are white. 
Then either $c_{s-1}$ and $c_{m+1}$ are distinct active vertices, or there are only two bad centers in $G$.
This gives us our $3$ active vertices, or a contradiction; thus, the case when $c_s$ and $c_m$ are both white is resolved. 

Without loss of generality, suppose $c_m$ is blue and $c_s$ is white. 
Then $c_m$ and $c_{s-1}$ are both active. 
If $s-1=m$, then $G$ only has two bad vertices. 
Thus, we find 3 active vertices in $c_m,c_{n-1}, c_{s-1}$. 

Finally, if both $c_m$ and $c_s$ are blue, then we find $3$ active vertices in $c_{n-1},c_m,c_s$. 

In call cases, we are done. 
\end{proof}

\begin{lem}\label{lem.q=2.end}
If Blue cannot invoke Options 1,2, or 3 after spending at least $2$ tokens, then all the centers of $G$ are blue and at most two blue centers have white neighbors. 
\end{lem}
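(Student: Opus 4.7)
The plan is to derive a contradiction from the three hypotheses, making essential use of the invariant that Blue's opening two tokens color two adjacent centers blue, and since vertices never revert to white, an adjacent pair of blue centers exists at every subsequent state.

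First I would translate the hypotheses into structural facts on the current state. Option~1 failing forbids any blue vertex from having exactly one white neighbor: applied to blue leaves this forces every white center to have only white leaves, and applied to blue centers it says each blue center has either zero or at least two white neighbors. Option~2 failing, via the contrapositive of Lemma~\ref{force_lemma}, gives at most two active vertices, which are all centers in $C_{n,k}$. Option~3 failing (together with the active-count cap) says no protected path on at least three vertices exists. Combining these, any blue center with a white neighbor must have at least two white neighbors and therefore be active, so at most two blue centers have white neighbors; this establishes the second conclusion.

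For the first conclusion I would suppose for contradiction that some center $c$ is white. Then $c$ has only white leaves (is good). Let $W = c_a c_{a+1} \cdots c_b$ be the maximal arc of consecutive white centers containing $c$. By the adjacent-blue-centers invariant, $W$ is a proper arc of the cycle, bounded by blue centers $c_{a-1}$ and $c_{b+1}$. If $|W|\ge 3$, then $W$ itself is a protected path on at least three centers, contradicting Option~3 failing. Otherwise, I would try to extend through one border. For the extended path to be protected the border must have a white leaf; if $c_{a-1}$ does not, then since $c_a$ is white and Option~1 failing forbids having exactly one white neighbor, its other cycle neighbor $c_{a-2}$ must also be white, so $c_{a-1}$ is an active vertex with exactly two white (cycle) neighbors and no white leaves. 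The symmetric statement holds at $c_{b+1}$.

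The main obstacle is the residual case in which both $c_{a-1}$ and $c_{b+1}$ lack white leaves. In that case each of them is active and together they consume the full budget of two active vertices, so every other blue center has zero white neighbors. Tracking the color transitions on the opposite side of the cycle from $W$, any blue-to-white transition would force the blue center to have a white neighbor and thus to be active; since the only active blue centers are $c_{a-1}$ and $c_{b+1}$, no transitions occur between them on the far side, so the entire arc $c_{b+2}\cdots c_{a-2}$ is white. If this arc has at least three centers it is a protected path on at least three vertices, contradicting Option~3 failing. Otherwise $n \le 6$ and the only blue centers in the whole graph are $c_{a-1}$ and $c_{b+1}$, which a quick cycle-distance check shows are at cycle-distance at least two and thus non-adjacent, contradicting the strategy-maintained invariant of two adjacent blue centers. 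Either way we reach a contradiction, completing the proof.
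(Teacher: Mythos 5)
Your proof takes the same basic route as the paper's: the ``at most two centers with white neighbors'' claim falls out of the active-vertex cap exactly as in the paper, and a white center is ruled out by producing a protected path on at least three centers. The difference is that you are substantially more careful at the step the paper compresses into one sentence. The paper takes the maximal white arc around the white center together with its two blue borders and simply asserts the result is a protected path; by the paper's own definition this requires each blue border to have a white degree-one neighbor, which is not automatic, and your analysis of the leafless-border situation (a leafless border is forced to have a second white cycle-neighbor and hence is active; two leafless borders exhaust the active budget; the far arc is then entirely white, giving either a long protected path or only two non-adjacent blue centers, contradicting the opening invariant) is a correct patch of exactly this point. Your appeal to the two-adjacent-blue-centers invariant is legitimate, since the lemma is only ever applied within the stated strategy for Blue, whose first move creates such a pair and colors never revert.

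One sub-case slips through your split: $|W|=1$ with exactly one leafless border, say $c_{b+1}$. Extending through the leafy border $c_{a-1}$ gives only the two-vertex path $c_{a-1}c_a$, which cannot trigger Option 3, and your far-arc argument is only invoked when both borders are leafless. The configuration is still contradictory, and by the same mechanism: $c_{b+1}$ leafless forces $c_{b+2}$ to be white; since $c_{a-1}$ and $c_{b+1}$ already account for both permitted active vertices, the first blue center met when walking from $c_{b+2}$ away from $W$ would be a third active vertex unless it is $c_{a-1}$ itself, so the arc $c_{b+2}\cdots c_{a-2}$ is entirely white and you land in the same dichotomy (a protected path on at least three centers, or only two non-adjacent blue centers). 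Add this case and the argument is complete.
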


\begin{proof}
For the sake of contradiction, suppose a center $c_0$ is white. 
Since Blue cannot invoke Option $1$, it follows that all of $c_0$ degree 1 neighbors are white. Since the graph has at least two blue centers, there are distinct indices $i,j$ such that $c_i,c_j$ are blue and $c_k$ is white with only white degree $1$ neighbors  for $0< k < i$ and $j<k <n$.
This implies that $c_i\cdots c_j$ is a protected path, contradicting the fact that Option 3 cannot be invoked. 
Thus, all centers of $G$ are blue. 

Now suppose that there are three centers $x,y,z$ with white neighbors. 
Since Option 1 cannot be invoked, $x,y,z$ each have $2$ white neighbors, and are therefore, active.
Since there are $3$ active vertices, Blue could invoke Option 2; this is a contradiction. 
Thus,
at most two centers have white neighbors. 
\end{proof}

One consequence of Lemma \ref{lem.q=2.end} is that Blue will not use Option 4 until all centers are blue and at most two blue centers have white neighbors. 
Once the game reaches such a state, Blue is effectively playing to fill in a few leaves in order to close the game with some Rule 2 applications. 
The rest of the required analysis will focus on determining how often Blue  must invoke Option 3 to reach this final phase of the game.
The next lemma  states that if Blue invokes Option 3, then $\varphi$ should be reduced by a factor of $1/2$. 

\begin{lem}\label{lem.half}
Suppose there is exactly one protected path $P$ in $H_i$ such that $\varphi(H_i,P)>0$. 
Then Blue can spend a token in such a way that \[\varphi(H_{i+1})\leq \frac{\varphi(H_i)}{2}.\]
\end{lem}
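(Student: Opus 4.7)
The plan is to have Blue split the unique protected path $P = c_1 c_2 \cdots c_m$ realizing $\varphi(H_i)$ near its midpoint by spending a single token on a pendant of an internal center. Since $\varphi(H_i, P) = m - 2 > 0$, we have $m \geq 3$, so there exists an internal index $j$ with $1 < j < m$. I would choose $j = \lfloor m/2 \rfloor + 1$, for which a routine check gives $j - 2 \leq (m-2)/2$ and $m - j - 1 \leq (m-2)/2$. Because $c_j$ is internal to $P$ it is good, so $c_j$ is white and every pendant of $c_j$ is white; since $C_{n,k}$ has at least two pendants per center, $c_j$ has at least two white pendants, one of which Blue turns blue with a single token.

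After this move $c_j$ is bad (it now has a blue pendant of degree $1$), but still retains at least one white pendant. Consequently $c_1 c_2 \cdots c_j$ and $c_j c_{j+1} \cdots c_m$ are both protected in $H_{i+1}$: every one of their centers has a white pendant (the interior centers unchanged from $P$, and $c_j$ by the previous sentence), their interior centers are good (inherited from $P$), and the endpoints are permitted to be bad. Their $\varphi$-values are $j - 2$ and $m - j - 1$, each at most $(m-2)/2 = \varphi(H_i)/2$ by the choice of $j$.

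The main obstacle is verifying that no other protected path in $H_{i+1}$ exceeds this bound. Protected paths disjoint from $c_j$ are unaffected by Blue's move, and by the uniqueness hypothesis such a path has $\varphi \leq 0$ in $H_i$, hence also in $H_{i+1}$. Any protected path containing $c_j$ in $H_{i+1}$ must have $c_j$ as an endpoint (since $c_j$ is now bad), so it is a subpath of $c_j c_{j-1} \cdots c_r$ for some $r \leq 1$ or of $c_j c_{j+1} \cdots c_s$ for some $s \geq m$. Such a path cannot extend past $c_1$ or $c_m$ without contradicting the maximality of $P$ in $H_i$: for instance, if $c_j c_{j-1} \cdots c_1 c_0$ were protected in $H_{i+1}$, then $c_0 c_1 \cdots c_m$ would already be protected in $H_i$ with $\varphi = m - 1 > \varphi(H_i)$, a contradiction (and symmetrically for the $c_m$ side). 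Combining these observations yields $\varphi(H_{i+1}) \leq (m-2)/2 = \varphi(H_i)/2$, completing the plan.
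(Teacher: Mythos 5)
Your proof is correct and takes essentially the same approach as the paper: spend one token on a pendant of a center near the midpoint of $P$, splitting it into two protected subpaths each with at most $\varphi(H_i)/2$ good centers. You are in fact slightly more careful than the paper's version, which picks $x=\lceil m/2\rceil$ and omits the verification that the resulting subpaths are protected and that no other protected path of $H_{i+1}$ can realize a larger value of $\varphi$.
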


\begin{proof}
Suppose that the protected $P$ is given by $c_1\cdots c_\ell$. 
Since $\varphi(H_i,P)>0$, it follows that $\ell\geq 3$. 
Consider $c_{x}$ where $x= \lceil\tfrac{\ell}{2}\rceil$. 
Since $c_{x}$ is on internal vertex on $P$, all the leaves on $c_{x}$ are white. 

Assume that Blue spends a token to color a degree 1 neighbor of $c_{x}$ blue in order to transition to $H_{i+1}$. 
This will split $P$ into two protected paths $P'=c_1\cdots c_x$ and $P''=c_x\cdots c_\ell$, the longer of which will realize $\varphi(H_{i+1})$. 
Notice that \[\varphi(H_{i+1},P') = x-2= \lceil\tfrac{k}{2}\rceil-2 \leq \frac{\varphi(H_i)+3}{2}-2=\frac{\varphi(H_i)-1}{2}\]
and 
\[\varphi(H_{i+1}, P'') = \ell-x-1 =\ell - \lceil\tfrac{\ell}{2}\rceil-1\leq \frac{\ell}{2}-1=\frac{\varphi(H_i)}{2}.\]
This concludes the proof.
\end{proof}

\begin{thm}\label{thm.q=2}
 For $ k\geq 2$ and $n\geq 3$, we have \[\log_2(n)+1\leq \Z_2(C_{n,k}) \leq \log_2(n-2) + 2k + 1.\]
\end{thm}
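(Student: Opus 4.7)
The lower bound is Lemma~\ref{lem.q=2.lb}, so I only address the upper bound. My plan is to analyze the explicit Blue strategy stated just before Lemma~\ref{lem.2paths} by partitioning the game into three phases: opening, halving, and cleanup.

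In the opening phase Blue pays two tokens on a single leaf of each of two adjacent centers $c_1, c_2 \in V(C_n)$. Under Rule 2 these leaves blue their centers for free, so afterwards $c_1, c_2$ are active bad centers and the long arc from $c_1$ through $c_n, \dots, c_3$ to $c_2$ is a single protected path with $\varphi(G) = n - 2$. In the halving phase Blue greedily invokes Options 1 and 2 for free forces, using Lemma~\ref{lem.2paths} to certify that Option 2 becomes available as soon as a third bad center appears: two protected paths of length $\ge 1$ together with three bad centers yield a free Rule 3 force via Lemma~\ref{force_lemma}. When no free force is available but some protected path still contains at least three vertices, Blue invokes Option 3 and pays one token on an interior leaf of a longest current protected path; Lemma~\ref{lem.half} guarantees this halves $\varphi$ in the unique-path case, so after at most $\log_2(n-2)$ Option 3 tokens no protected path contains a good center.

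Once Options 1--3 are all unavailable, Lemma~\ref{lem.q=2.end} gives that every cycle-center is blue and at most two centers still carry white leaves; this is the cleanup phase. Blue pays one Option 4 token per remaining white leaf, and a careful accounting shows that at least one leaf at the two surviving centers was already blued during the opening or halving, so the cleanup costs at most $2k - 1$ tokens. Summing the three phases gives $\Z_2(C_{n,k}) \le 2 + \log_2(n-2) + (2k - 1) = \log_2(n-2) + 2k + 1$, as required.

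The main obstacle is twofold. First, Lemma~\ref{lem.half} assumes a unique protected path with $\varphi > 0$, while after several Option 3 calls the graph typically has several simultaneous protected paths; I would resolve this by proving that if Blue always targets a longest current protected path, then $\max_P \varphi(G,P)$ at most halves, and the free forces in between color only leaves or new cycle-center bad vertices, neither of which can grow an existing protected path. Second, the $2k - 1$ cleanup bound requires arguing that of the at-most-two surviving centers, at least one has already had a leaf blued during Option 3 calls; this should follow from the observation that Option 3 targets become bad centers that, under White's best play, naturally persist to the boundary of the surviving white region.
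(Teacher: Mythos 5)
Your proposal follows the paper's route essentially exactly: the same two-token opening, the same use of Lemma~\ref{lem.2paths} to show Option~3 is only needed when a single protected path remains, Lemma~\ref{lem.half} to halve $\varphi$ at each paid step, and Lemma~\ref{lem.q=2.end} to enter the cleanup. The total you reach is right, but your bookkeeping of the two middle phases contains a compensating pair of slips. First, $\log_2(n-2)$ Option~3 tokens do \emph{not} suffice to kill every good center: Lemma~\ref{lem.half} only guarantees $\varphi\mapsto\varphi/2$, so starting from $\varphi=n-2$ you may need $\lfloor\log_2(n-2)\rfloor+1$ invocations (when $n-2$ is a power of two, $\log_2(n-2)$ halvings can leave $\varphi=1$); the paper budgets $\log_2(n-2)+1$. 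Second, your $2k-1$ cleanup bound rests on an unproved claim that one surviving center already has a blue leaf; this argument is both shaky and unnecessary, because a blue center with $k$ white leaves costs only $k-1$ tokens (Rule~2 colors the last leaf for free), so the cleanup is at most $2k-2$ outright. With the corrected counts, $2+(\log_2(n-2)+1)+(2k-2)=\log_2(n-2)+2k+1$, which is the paper's accounting. Finally, the ``multiple protected paths'' obstacle you flag at the end is already resolved by Lemma~\ref{lem.2paths} exactly as you cite it in your halving phase, so no new argument about targeting a longest path is required.
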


\begin{proof}
Blue spends $2$ tokens to start the game. 
Once Blue cannot invoke Option 1,2, or 3, all centers will be blue and at most two centers will have white neighbors. 
The worst case scenario for Blue, is that there are two centers with $k$ white neighbors of degree 1. 
Therefore, Blue will have to spend at most $2k-2$ tokens to finish the game. 

All that remains to be shown is that Blue will use Option $3$ at most $\log_2(n-2)+1$ times, since Blue will not use Option 4 until the final phase of the game. 
After Blue spends their first 2 tokens $\varphi(H_i) =  n-2$. 
Lemma \ref{lem.2paths} states that Blue will only spend  more tokens, if there is exactly one protected path.
Furthermore, once $\varphi(H_i)=0$, Blue will use Lemma \ref{lem.q=2.end} to reach the final phase of the game. 
Therefore, by Lemma \ref{lem.half}, Blue will spend at most $\log_2(n-2)+1$ tokens. 

Counting the total number of tokens Blue must spend in the worst case scenario gives the upper bound.
The lower bound was proven in Lemma \ref{lem.q=2.lb}.
\end{proof}

\subsection{Lower bound for $q=3$}\label{sec:q=3}

The proof of the lower bound for $\Z_3(C_{n,k})$ is similar to the proof of the lower bound for $\Z_2(C_{n,k})$, though there are a few more details to sort out. 
Some definitions are reintroduced in the specific context of $q=3$ so that this section is self contained. 
Since the proof of the lower bound for $q=3$ generalizes the proof of the lower bound for $q=2$, it is worth asking why we do not generalize the proof to large values of $q$. 
This question will be taken up at the end of the section, where the relevant details can be referenced. 

Suppose that $G$ is a graph isomorphic to $C_{n,k}$ with some number of blue vertices. 
We say a center $v$ is \textit{bad} if $v$ is blue or $v$ has at least one blue neighbor of degree one. 
Centers which are not bad are called \textit{good}.
We say  a path $P$ of  centers $c_1c_2\dots c_{m}$ is \textit{protected} if every vertex in $P$ has at least one white neighbor of degree one, there is at most one internal bad center in  $P$, no two bad centers in $P$ are adjacent, and $c_1,c_m$ are distinct and possibly bad. 
We call $c_2,\dots,c_{m-1}$ the \emph{internal} centers of $P$.

Let $\varphi(G,P)$ be the number of internal white vertices $w$ in $P$ such that all degree one neighbors of $w$ are white. 
In particular, $\varphi(G,P)$ is the number of good centers in $P$. 
Define $\varphi(G)= \max_P\{\varphi(G,P)\}.$

We will prove a lower bound on $\Z_{3}(C_{n,k})$ by analysing a strategy for White against arbitrary (optimal) play by Blue.
White will play according to the following strategy: 
If $\varphi(G)>0$, White will select a protected path of centers $P$ that realizes $\varphi(G)$. 
Whenever Blue uses Rule 3 and passes a set of white components $W=\{W_1, W_2, W_3, W_4\}$ to White, then White will do one of two things:
\begin{enumerate}[label=Option\,\arabic*:, wide=0pt, leftmargin=*]
    \item White passes back a component $W_i$ such that $W_i$ does not contain a vertex in $P$ or a degree one vertex adjacent to a vertex in $P$. 
    \item If every component in $W$ contains a vertex in $P$ or a degree one vertex adjacent to a vertex in $P$, then White passes back a subset $S\subseteq W$ such that Rule 2 cannot be used on $G[B\bigcup_{W_i\in S} W_i]$.
\end{enumerate}

\textbf{Claim:} If every component in $W=\{W_1, W_2, W_3, W_4\}$ contains a vertex in a protected path $P$ or a degree one vertex adjacent to a vertex in $P$, then there exists a subset $S\subseteq \{W_1, W_2, W_3, W_4\}$  such that Rule 2 cannot be used on $G[B\bigcup_{W_i\in S} W_i]$.
\begin{proof}

Without loss of generality, if $W_1$ and $W_2$ are isolated vertices adjacent to the same bad blue center in $P$, then using $S=\{W_1,W_2\}$ proves the claim. 
Therefore, we will assume that no two components in $W$ are isolated vertices adjacent to the same bad blue center of $P$. 
Now, if $P$ contains fewer than $3$ bad blue centers, then $W$ must contain two components that are isolated vertices adjacent to the same bad blue vertex (or $W$ contains a component which does not contain a vertex in $P$ or a degree one vertex adjacent to a vertex in $P$). 
Therefore, we will assume that $P$ has exactly $3$ bad blue centers.
By inspection, White can find the desired set $S$ (see Figure \ref{fig:findS}).
\end{proof}

\begin{figure}
%\begin{comment}
    \centering
        \begin{tikzpicture}
        \draw[thick, gray!90]    (0,0) -- (1,0);
        \draw[thick, gray!90]    (2,0) -- (3,0);
        \draw[thick, gray!90]    (3,0) -- (4,0);
        \draw[thick, gray!90]    (5,0) -- (6,0);
        \draw[thick, dash dot, gray!90]    (1,0) -- (2,0);
        \draw[thick, dash dot, gray!90]    (4,0) -- (5,0);
        
        \draw[thick, gray!90]    (-.25,1) -- (0,0) -- (.25,1);
        \draw[thick, gray!90]    (.75,1) -- (1,0) -- (1.25,1);
        \draw[thick, gray!90]    (1.75,1) -- (2,0) -- (2.25,1);
        \draw[thick, gray!90]    (2.75,1) -- (3,0) -- (3.25,1);
        \draw[thick, gray!90]    (3.75,1) -- (4,0) -- (4.25,1);
        \draw[thick, gray!90]    (4.75,1) -- (5,0) -- (5.25,1);
        \draw[thick, gray!90]    (5.75,1) -- (6,0) -- (6.25,1);
        
        \draw [decorate, decoration = {calligraphic brace}] (-.3,1.2) --  (.3,1.2);
        \draw [decorate, decoration = {calligraphic brace}] (.7,1.2) --  (2.3,1.2);
        \draw [decorate, decoration = {calligraphic brace}] (2.7,1.2) --  (3.3,1.2);
        \draw [decorate, decoration = {calligraphic brace}] (3.7,1.2) --  (5.3,1.2);
        \draw [decorate, decoration = {calligraphic brace}] (5.7,1.2) --  (6.3,1.2);
        
        \node[circle, label={90:$1$}] at (0,1.2) {};
        \node[circle, label={90:$2$}] at (1.5,1.2) {};
        \node[circle, label={90:$3$}] at (3,1.2) {};
        \node[circle, label={90:$4$}] at (4.5,1.2) {};
        \node[circle, label={90:$5$}] at (6,1.2) {};

        \filldraw[blue] (0,0) circle (2pt);
        \filldraw[white] (1,0) circle (2pt);
        \draw[black] (1,0) circle (2pt);
        \filldraw[white] (2,0) circle (2pt);
        \draw[black] (2,0) circle (2pt);
        \filldraw[blue] (3,0) circle (2pt);
        \filldraw[white] (4,0) circle (2pt);
        \draw[black] (4,0) circle (2pt);
        \filldraw[white] (5,0) circle (2pt);
        \draw[black] (5,0) circle (2pt);
        \filldraw[blue] (6,0) circle (2pt);
        
        \filldraw[white] (-.25,1) circle (2pt);
        \draw[black] (-.25,1) circle (2pt);
        \filldraw[white] (.25,1) circle (2pt);
        \draw[black] (.25,1) circle (2pt);
        
        \filldraw[white] (.75,1) circle (2pt);
        \draw[black] (.75,1) circle (2pt);
        \filldraw[white] (1.25,1) circle (2pt);
        \draw[black] (1.25,1) circle (2pt);
        
        \filldraw[white] (.75,1) circle (2pt);
        \draw[black] (.75,1) circle (2pt);
        \filldraw[white] (1.25,1) circle (2pt);
        \draw[black] (1.25,1) circle (2pt);
        
        \filldraw[white] (1.75,1) circle (2pt);
        \draw[black] (1.75,1) circle (2pt);
        \filldraw[white] (2.25,1) circle (2pt);
        \draw[black] (2.25,1) circle (2pt);
        
        \filldraw[white] (2.75,1) circle (2pt);
        \draw[black] (2.75,1) circle (2pt);
        \filldraw[white] (3.25,1) circle (2pt);
        \draw[black] (3.25,1) circle (2pt);
        
        \filldraw[white] (3.75,1) circle (2pt);
        \draw[black] (3.75,1) circle (2pt);
        \filldraw[white] (4.25,1) circle (2pt);
        \draw[black] (4.25,1) circle (2pt);
        
        \filldraw[white] (4.75,1) circle (2pt);
        \draw[black] (4.75,1) circle (2pt);
        \filldraw[white] (5.25,1) circle (2pt);
        \draw[black] (5.25,1) circle (2pt);
        
        \filldraw[white] (5.75,1) circle (2pt);
        \draw[black] (5.75,1) circle (2pt);
        \filldraw[white] (6.25,1) circle (2pt);
        \draw[black] (6.25,1) circle (2pt);

        \end{tikzpicture}
        %\end{comment}
        %\begin{center}
        %\includegraphics[scale=.5]{ZqImage1.png}
        %\end{center}
    \caption{We have assumed that $W$ does not contain two components from zones $1$, $3$, or $5$. 
    If $W$ contains components from zones $\{1,2,3\}$, $\{3,4,5\}$, or $\{1,2,4,5\}$, then White can pass back those components. 
    Notice that any set of four components will fall into at least one of these three cases.}
    \label{fig:findS}
\end{figure}
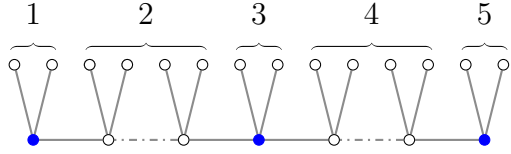

Suppose $\{H_i\}_{i=0}^t$ is a sequence of colored copies of $C_{n,k}$ where $H_0$ has all white vertices, $H_t$ has all blue vertices, and $H_i$ follows from $H_{i-1}$ by one application of some $\Z_3$ forcing rule. 
In particular, assume that White plays according to the strategy above in the sequence $\{H_i\}_{i=1}^t$. 
We will also assume Blue successfully turns a vertex blue with every move so that $H_{i}$ has exactly one more blue vertex than $H_{i-1}$.

\begin{lem}\label{lem.noprogress}
If Blue does not spend a token to transition from $H_{i-1}$ to $H_i$, then $\varphi(H_i)= \varphi(H_{i-1})$. 
\end{lem}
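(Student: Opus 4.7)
The plan is to mirror the proof of Lemma~\ref{lem.nodecrease} and add the bookkeeping needed to handle the possible internal bad center $c_x$ now allowed in a protected path. The inequality $\varphi(H_i)\le\varphi(H_{i-1})$ is automatic by monotonicity: if a path is protected in $H_i$ then strictly fewer centers were bad in $H_{i-1}$, so it is protected in $H_{i-1}$ as well, and an internal good center in $H_i$ is trivially also internal good in $H_{i-1}$. Hence the substantive task is to show $\varphi(H_i)\ge\varphi(H_{i-1})$.

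The plan is to assume $\varphi(H_{i-1})>0$ (the statement is trivial otherwise) and to fix a protected path $P=c_1c_2\cdots c_m$, with $m\ge 3$, that realizes $\varphi(H_{i-1})$. Its bad centers form a subset of $\{c_1,c_x,c_m\}$, where $c_x$ is the single internal bad center permitted by the definition. The central step is to prove that the transition $H_{i-1}\to H_i$ turns neither any internal good center of $P$ blue nor any degree-one neighbor of a $P$-vertex blue. For Rule~2, the invariant that every vertex in $P$ has a white degree-one neighbor does almost all the work: forcing a leaf $w$ of some $c_j\in P$ would require $c_j$ to be blue with $w$ as unique white neighbor, so $c_j\in\{c_1,c_x,c_m\}$, and a brief case check using the ``no two adjacent bad centers'' condition produces a second white neighbor of $c_j$ inside $P$ and a contradiction. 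Forcing an internal good center $c_j$ would need $c_{j-1}$ or $c_{j+1}$ to be blue with $c_j$ as unique white neighbor, but these lie in $P$ and retain a white leaf besides $c_j$. For Rule~3, White's two-option strategy combined with the claim proved just before the lemma ensures that the resulting free force cannot land on $V(P)$ or on any degree-one neighbor of a vertex in $V(P)$.

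Once this central step is in hand, every internal good center of $P$ remains internal good in $H_i$ and all leaves of $P$-vertices remain white, so if $P$ stays protected then $\varphi(H_i,P)=\varphi(H_{i-1},P)=\varphi(H_{i-1})$. The only way $P$ can lose protection is for an endpoint, say $c_1$, to turn blue (necessarily via Rule~2 from its cycle neighbor outside $P$, since Rule~3 is blocked by White's strategy) while being adjacent in $P$ to $c_x$, so that $c_2=c_x$; the plan is then to switch to the sub-path $P'=c_2c_3\cdots c_m$. One verifies that $P'$ is protected in $H_i$ because every vertex still has a white leaf, both endpoints $c_2=c_x$ and $c_m$ are allowed to be bad, and $c_3,\ldots,c_{m-1}$ remain internal good. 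Since $c_x$ was never counted in $\varphi(H_{i-1},P)$, a direct count gives $\varphi(H_i,P')=m-3=\varphi(H_{i-1},P)$, and the symmetric case $c_{m-1}=c_x$ with $c_m$ turning blue is handled identically.

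The main obstacle is precisely this endpoint-adjacent-to-$c_x$ configuration, which does not appear in the $q=2$ argument and which forces the sub-path modification in order to preserve $\varphi$; everything else is a careful unpacking of the white-leaf invariant together with the Rule~3 claim already established.
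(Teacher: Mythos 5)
Your proof is correct and follows essentially the same route as the paper's: rule out Rule 3 via White's strategy together with the Claim preceding the lemma, rule out Rule 2 forces onto leaves of $P$-vertices and onto internal good centers using the white-leaf invariant plus the no-two-adjacent-bad-centers condition, and then check that whatever vertex does turn blue leaves the count of internal good centers unchanged. The one substantive difference is in your favor: the paper's proof, upon concluding that the forced vertex must be an endpoint $w\in\{c_1,c_m\}$, simply asserts that ``$P$ is protected in $H_i$,'' which fails precisely in the configuration you isolate --- $c_1$ good in $H_{i-1}$, forced blue from its cycle neighbor outside $P$, with $c_2=c_x$ the internal bad center, so that $H_i$ contains two adjacent bad centers in $P$. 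Your repair --- passing to the subpath $P'=c_2\cdots c_m$, which is protected in $H_i$ with the same number of internal good centers since $c_x$ was never counted --- is sound (note $m\geq 4$ here because $\varphi(H_{i-1},P)>0$ forces a good center strictly between $c_2$ and $c_m$, so $P'$ has distinct endpoints), and it recovers the stated equality. So your argument is not a different approach but a more complete execution of the paper's.
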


\begin{proof}

Let $P$ be a protected path in $H_{i-1}$ that realizes $\varphi(H_{i-1})$. 
For the sake of contradiction, assume that  Blue did not spend a token to transition from $H_{i-1}$ to $H_i$ and  $\varphi(H_i)< \varphi(H_{i-1})$. 
Let $w$ be a white vertex in $H_{i-1}$ but blue in $H_i$.
Since Blue did not spend a token, Blue must have used either Rule 2 or Rule 3. 
Since $\varphi(H_i)< \varphi(H_{i-1})$, we know that $w$ is a vertex in $P$, or $w$ is a degree one neighbor of a vertex in $P$.

Suppose that Blue used Rule 3 to transition from $H_{i-1}$ to $H_i$.
Since White was following the strategy given above, we know that White passed Blue some component in $W_i\in \{W_1, W_2,W_3,W_4\}$ such that $W_i$ does not contain a vertex in $P$ or a degree one vertex adjacent to a vertex in $P$. 
Therefore, Blue cannot use Rule 3 to turn a vertex in $P$ blue. 
Furthermore, Blue cannot use Rule 3 to turn a degree one neighbor of a vertex in $P$ blue.
Thus, we will assume that Blue used Rule 2 to transition from $H_{i-1}$ to $H_i$.

Suppose $w$ is a degree one neighbor of a vertex in $P$ where $w$ is white in $H_{i-1}$ but blue in $H_{i}$.
This implies that $w$ is adjacent to a bad center $v$ of $P$ in $H_{i-1}$. 
However, since $P$ is a protected path, all bad centers of $P$ have at least $2$ white neighbors. 
In particular, Blue cannot use Rule 2 to turn $w$ blue (ruling out a case).

Suppose $w\in V(P)$ is white in $H_{i-1}$ but is blue in $H_i$. 
This implies that there exists a blue vertex $v$ in $H_{i-1}$ which is adjacent to $w$. 
If $v$ is a vertex of degree one, then $w$ was a bad center in $H_{i-1}$ and $P$ is still protected in $H_i$. 
If $v$ is a center, then $v\notin V(P)$ since every vertex in $P$ has at least one white neighbor of degree one. 
However, this implies that $w\in \{c_1,c_m\}$ and, therefore, $P$ is protected in $H_i$. 
Since $P$ is protected in $H_i$ whether or not $v$ is a center, it follows that $\varphi(H_i)\geq \varphi(H_{i-1})$; a contradiction.  %We can now conclude that if $\varphi(H_i)< \varphi(H_{i-1})$, then Blue spent a token to transition from $H_{i-1}$ to $H_i$.
\end{proof}

We will continue our analysis by distinguishing between two kinds of protected paths.
Notice that all protected paths have either $2$ or $3$ bad centers.

\begin{lem} \label{lem.2protected}
If Blue spends a token to transition from $H_{i-1}$ to $H_i$ and $\varphi(H_{i-1})$ is realized by a protected path with $2$ bad centers, then $\varphi(H_i)\geq \varphi(H_{i-1})-1$.
\end{lem}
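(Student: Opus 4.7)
The plan is to case-analyze Blue's move based on (i) which kind of protected path $P$ realizes $\varphi(H_{i-1})$ and (ii) where Blue's newly-colored vertex sits relative to $P$. Write $P = c_1 c_2 \ldots c_m$. Since a protected path has at most one internal bad center, the 2 bad centers of $P$ fall into one of two configurations:
\begin{itemize}
    \item (A) both endpoints $c_1, c_m$ are bad, and $\varphi(H_{i-1}, P) = m-2$;
    \item (B) exactly one endpoint (WLOG $c_1$) is bad, and exactly one internal center $c_\ell$ is bad, with $\varphi(H_{i-1}, P) = m-3$.
\end{itemize}

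The first observation is that Blue's token creates at most one new bad center in the whole graph, which I will call $c_j$ if it lies in $P$. If Blue's move does not create a new bad center in $P$ and does not deplete the last white pendant of any center in $P$, then $P$ is still protected in $H_i$ and $\varphi(H_i) \geq \varphi(H_{i-1})$. Because $k \geq 2$ and Blue just spent one token, no center in $P$ can lose its last white pendant in this single move provided that center had at least two white pendants at the start of the turn; the remaining subcase (only one white pendant left) can be reduced by restricting to a suitable sub-path, since such a center is then forced to be bad and must play the role of an endpoint.

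In case (A), if Blue's new bad $c_j$ lies strictly between $c_1$ and $c_m$ and is not adjacent to either endpoint (i.e., $j \notin \{2, m-1\}$), then $P$'s bad configuration becomes $\{c_1, c_j, c_m\}$: exactly one internal bad and two endpoint bad, which is allowed for a protected path in the $q=3$ setting, and $\varphi(H_i, P) = m-3 = \varphi(H_{i-1})-1$. The adjacency edge cases $j=2$ and $j=m-1$ are handled by passing to $c_2 \ldots c_m$ or $c_1 \ldots c_{m-1}$, each of which is protected with two bad endpoints and $\varphi = m-3$.

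In case (B), the subcases depend on $c_j$'s position. If $c_j = c_m$, then $P$ has bad centers $\{c_1, c_\ell, c_m\}$ with one internal bad, and (up to resolving adjacency between $c_\ell$ and an endpoint via a sub-path) $P$ remains protected with $\varphi(H_i, P) = m-3 = \varphi(H_{i-1})$. The harder situation is when $c_j$ is internal and $j \neq \ell$; then $P$ has two internal bad centers and must be split. Among the candidate sub-paths $c_1 \ldots c_\ell$ (with $c_j$ as its single internal bad), $c_\ell \ldots c_m$, $c_1 \ldots c_j$, and $c_j \ldots c_m$, I would argue that one of these necessarily realizes at least $\varphi(H_{i-1})-1 = m-4$ good internal centers. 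When direct sub-paths of $P$ fail to give this bound (for instance when $c_j$ sits roughly half-way between $c_1$ and $c_\ell$), I would invoke the optimality of $P$ in $H_{i-1}$: because $P$ was a maximizer, the cycle structure of $C_{n,k}$ forces the complementary arc and any other bad centers outside $P$ to be laid out in a way that still yields a protected path of large enough $\varphi$.

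The main obstacle will be precisely this last subcase of (B), where $c_j$ is deep inside one of the two arcs of $P$ cut out by $c_\ell$. Handling it cleanly requires combining sub-path bookkeeping with a use of the cycle's global structure (and the hypothesis that $P$ was optimal), and it will likely require careful checking that at least one of the candidate new paths, or a path using the complementary arc, keeps all of its mandatory white pendants intact and avoids adjacent bad centers.
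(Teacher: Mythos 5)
Your case analysis is more careful than the paper's, and the difficulty you flag at the end is real --- but your proposal does not close it, and the fallback you sketch for it is false. In configuration (B), when Blue's token creates a new internal bad center $c_j$ with $j\neq\ell$, the $m-4$ surviving good centers of $P$ are split into three runs by $c_\ell$ and $c_j$, and a protected path may contain at most one internal bad center, hence can cover at most two consecutive runs. Concretely, take $m=10$, $\ell=4$, $j=7$: then $\varphi(H_{i-1},P)=m-3=7$, the three runs have sizes $2,2,2$, and each of your four candidate sub-paths ($c_1\cdots c_7$, $c_4\cdots c_{10}$, $c_1\cdots c_4$, $c_7\cdots c_{10}$) has at most $4$ good internal centers, well short of the required $\varphi(H_{i-1})-1=6$. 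So the claim that one of these sub-paths necessarily realizes $m-4$ is wrong, and the appeal to the optimality of $P$ and the cycle's global structure is not an argument: nothing prevents White's maximizer from having its internal bad center far from both endpoints, after which Blue's single token can land in the middle of the longer arc. (A smaller slip: a center that loses its last white pendant cannot serve as an endpoint either, since the definition requires \emph{every} vertex of a protected path to retain a white degree-one neighbor; such a center must be dropped from the path.)

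For comparison, the paper's proof is four sentences: one token creates at most one new bad center, so $P$ passes from $2$ to at most $3$ bad centers and ``is protected in $H_i$,'' whence the good count drops by at most one. That argument never splits $P$ and never confronts the two-internal-bad-centers configuration (nor adjacency of bad centers, nor the lost-pendant issue); it is immediate only when the two original bad centers are the endpoints of $P$, which is your case (A). So you have not found a different route that works --- you have uncovered a configuration that neither your proposal nor the paper's own one-line argument handles. A complete proof must either rule that configuration out of the game states where the lemma is invoked (e.g.\ by strengthening the hypothesis to ``both bad centers are endpoints of $P$,'' which is what holds when the lemma is first applied in Theorem \ref{thm.corona.lower}) or accept a multiplicative rather than additive loss in that subcase, as in Lemma \ref{lem.2protected.extension}.
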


\begin{proof}
Suppose $P$ is a protected path with $2$ bad centers that realizes $\varphi(H_{i-1})$. 
If $\varphi(H_{i-1})=0$, then we are done. 
Therefore, assume that $\varphi(H_{i-1})>0$. 
Turning one vertex blue will create at most one bad center in $P$ in $H_i$.
In particular, $P$ has at least $\varphi(H_{i-1})-1$ good centers in $H_i$. 
Since $P$ only has $2$ bad centers in $H_{i-1}$, it follows that $P$ has at most $3$ bad centers in $H_i$ and that $P$ is protected in $H_i$. 
\end{proof}

\begin{comment}
\begin{lem}\label{lem.3protected}
Suppose $\varphi(H_{i-1})$ is realized by a protected path $P=c_1\cdots c_m$ where $c_1,c_x,c_m$ are bad centers of $P$. 
If Blue spends a token to transition from $H_{i-1}$ to $H_i$ which creates a bad center $c_y$ in $P$ (without loss of generality, $y>x$), then 
\[\varphi(H_i)\geq  \frac{\varphi(H_{i-1})+y-x-2}{2}.\]
Furthermore, if $y=x+1$, then the only protected subpaths $P'\subseteq P$ which realize $\varphi(H_i)$ have $2$ bad centers.
\end{lem}

\begin{proof}
Notice that $\varphi(H_{i-1})= x-2+m-1-x=m-3$. 
Furthermore, $P$ has \[x-2+y-1-x+m-1-y = m-4\] good vertices, of which $x-2$ are between $c_1$ and $c_x$, $y-1-x$ are between $c_x$ and $c_y$, and $m-1-y$ are between $c_y$ and $c_m$. 
Therefore, there exists a protected path $P'$ in $H_i$ with at least
\[\varphi(H_i)\geq y-1-x+\max\{m-1-y, x-2\}\geq y-1-x+\frac{m+x-y-3}{2}= \frac{\varphi(H_{i-1})+y-x-2}{2}.\]
The last claim follows by inspection.
\end{proof}

Lemma \ref{lem.2protected.extension} is a mild reformulation of Lemma \ref{lem.3protected}. 
\end{comment}

\begin{lem}\label{lem.2protected.extension}
Suppose $H_{i-1}$ has a protected path with  $\varphi(H_{i-1},P)$ good centers.
If Blue spends one token to transition from $H_{i-1}$ to $H_i$, then $H_i$ has a protected path $P'\subseteq P$ with at least $\tfrac{\varphi(H_i,P)-1}{3}$ good centers and 2 bad centers. 
In particular, 
\[\varphi(H_i)\geq \tfrac{\varphi(H_{i-1})-1}{3}. \]
\end{lem}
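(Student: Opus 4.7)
The plan is to cover the good centers of $P$ in $H_i$ with at most three protected sub-paths, each containing exactly two bad centers, and then apply pigeonhole.

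First I would catalog the bad centers of $P$ in $H_i$. Because $P$ is protected in $H_{i-1}$, it has at most three bad centers (at most one of which is internal) and no two adjacent. Blue's single token turns exactly one white vertex blue, which introduces at most one new bad center into $P$: either a center of $P$ itself becomes blue, or a degree-one neighbor of a center in $P$ becomes blue. Hence in $H_i$ the path $P$ has at most four bad centers at positions $b_1 < \cdots < b_s$ with $s \le 4$, and $\varphi(H_i,P) \ge \varphi(H_{i-1},P) - 1$. Because $k \ge 2$, every center of $P$ retains at least one white degree-one neighbor in $H_i$, so the ``white leaf'' condition in the definition of a protected path is preserved.

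Next I would construct at most three candidate sub-paths whose interiors together account for every good center of $P$ in $H_i$. When $s = 4$, take
\[Q_1 = c_1 \cdots c_{b_2}, \quad Q_2 = c_{b_2} \cdots c_{b_3}, \quad Q_3 = c_{b_3} \cdots c_m;\]
for $s \le 3$, use the analogous shorter collection. Each $Q_j$ has exactly two bad centers, which automatically gives ``at most one internal bad,'' and every good center of $P$ in $H_i$ lies in the interior of some $Q_j$. The combined good-center count over the $Q_j$'s is therefore at least $\varphi(H_i,P) \ge \varphi(H_{i-1},P) - 1$, and by pigeonhole some $Q_j$ contains at least $(\varphi(H_{i-1},P) - 1)/3$ good centers.

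The main obstacle will be verifying that the ``no two adjacent bad centers'' condition holds for some $Q_j$ achieving the pigeonhole bound. Adjacency can fail only when the new bad center is adjacent to an old bad center. Here the structure of $P$ in $H_{i-1}$ helps: if $P$ had three bad centers, the ``at most one internal bad'' constraint forces them to sit at $c_1$, some internal $c_x$, and $c_m$, so the new bad center can induce at most two adjacencies in $H_i$, and only in very constrained configurations. The key observation is that whenever a $Q_j$ fails the adjacency condition its two bad centers are consecutive in $P$, so $Q_j$ contains no good centers at all; discarding such a $Q_j$ does not decrease the total being distributed among the protected sub-paths. A short case analysis on the location of the new bad center then confirms that a protected $Q_j$ with at least $(\varphi(H_{i-1},P) - 1)/3$ good centers always survives, producing the desired $P'$. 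Finally, taking $P$ to realize $\varphi(H_{i-1})$ yields the ``in particular'' bound $\varphi(H_i) \ge (\varphi(H_{i-1}) - 1)/3$.
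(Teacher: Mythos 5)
Your proposal is correct and follows essentially the same route as the paper's (one-sentence) proof: Blue's single token creates at most one new bad center, so the at least $\varphi(H_{i-1},P)-1$ surviving good centers of $P$ are partitioned into at most three runs, each of which extends to a protected subpath, and pigeonhole gives the bound. You in fact supply more detail than the paper does, in particular the observation that any candidate subpath violating the ``no two adjacent bad centers'' condition contains no good centers and can be discarded harmlessly.
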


\begin{proof}
After Blue spends a token, $P$ will have at least $g-1$ good centers divided over at most $3$ subpaths of good centers. 
\end{proof}

\begin{lem}\label{lem.2step}
If Blue spends 2 tokens to transition from $H_{i}$ to $H_j$ and $\varphi(H_{i})$ is realized by a protected path with $2$ bad centers, then $H_j$ has a protected path with $2$ bad centers and  $\tfrac{\varphi(H_{i})-2}{3}$ good centers. In particular,
\[\varphi(H_j)\geq \frac{\varphi(H_{i})-2}{3}.\]
\end{lem}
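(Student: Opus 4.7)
The plan is to cascade Lemma \ref{lem.2protected} (for the first token) and Lemma \ref{lem.2protected.extension} (for the second token), while tracking one fixed protected path $P$ across every intermediate free force.

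First, fix a protected path $P$ with $2$ bad centers in $H_i$ that realizes $\varphi(H_i) = g$, and let $\alpha < \beta$ be the indices in $\{i, i+1, \dots, j-1\}$ at which Blue spends the two tokens (so the transitions $H_\alpha \to H_{\alpha+1}$ and $H_\beta \to H_{\beta+1}$ consume tokens, while every other transition from $H_i$ to $H_j$ is a free force). The key observation is that a free force does not disturb $P$ so long as $P$ is protected: if the newly blue vertex lies outside $V(P)$ and is not a degree-one neighbor of a vertex in $P$, then the bad/good classification of every center in $P$ is unchanged, so $\varphi(H_\cdot, P)$ is preserved. White can enforce this via her strategy, because the claim proven just before Figure \ref{fig:findS} shows that whenever $P$ is protected (hence has at most $3$ bad centers), Option $2$ of the strategy supplies a valid subset $S$ of components on which Rule $2$ cannot fire.

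Next, apply Lemma \ref{lem.2protected} to the transition $H_\alpha \to H_{\alpha+1}$: $P$ still has exactly $2$ bad centers and $g$ good centers in $H_\alpha$, so the first token creates at most one new bad center in $P$, leaving $P$ protected with $2$ or $3$ bad centers and $\varphi(H_{\alpha+1}, P) \geq g - 1$. The free forces from $H_{\alpha+1}$ to $H_\beta$ preserve $P$ by the same argument. Now apply Lemma \ref{lem.2protected.extension} to the transition $H_\beta \to H_{\beta+1}$: since $P$ is a protected path in $H_\beta$ with at least $g - 1$ good centers, there exists a protected subpath $P'' \subseteq P$ in $H_{\beta+1}$ with exactly $2$ bad centers and at least $\tfrac{(g-1) - 1}{3} = \tfrac{g-2}{3}$ good centers. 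Since $P''$ has $2$ bad centers, it remains protected through the remaining free forces from $H_{\beta+1}$ to $H_j$, so $\varphi(H_j) \geq \varphi(H_j, P'') \geq \tfrac{\varphi(H_i) - 2}{3}$, and $P''$ is the protected path promised by the statement.

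The one subtle point worth flagging is that White commits to the single path $P$ (originally selected at $H_i$) throughout the block, rather than re-selecting a realizing path of $\varphi$ after each intermediate step. This is legitimate because her strategy only requires $P$ to be protected (so that Option $2$ is always available), and $P$ remains protected as long as it has at most $3$ bad centers---the regime that holds from $H_i$ through $H_\beta$, after which $P$ is replaced by the subpath $P''$ which itself has only $2$ bad centers.
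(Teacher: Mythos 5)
Your proposal is correct and follows essentially the same route as the paper: chain Lemma \ref{lem.2protected} for the first token and Lemma \ref{lem.2protected.extension} for the second, using the fact (Lemma \ref{lem.noprogress}) that free forces in between do not decrease $\varphi$ along the tracked path. Your version is simply a more explicit rendering of the paper's three-line argument, carefully carrying a single protected path through the intermediate steps and noting that the final subpath has $2$ bad centers, which the paper leaves implicit in the statement of Lemma \ref{lem.2protected.extension}.
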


\begin{proof}
By Lemmas \ref{lem.noprogress}, $\varphi$ may decrease only during the time steps in which Blue spends a token. 
Suppose Blue spends a token at time steps $i_1$ and $i_2$, with $i<i_1<i_2\leq j$. 
By Lemma \ref{lem.2protected}, 
\[\varphi(H_{i_1})\geq \varphi(H_{i})-1.\]
By Lemma \ref{lem.2protected.extension}, 
\[\varphi(H_{i_2})\geq \tfrac{\varphi(H_{i_1})-1}{3}.\]
Substituting the first inequality into the second gives the result. 
\end{proof}

\begin{comment}
\begin{lem}\label{lem.3step} 
If Blue spends $3$ tokens to transition from $H_i$ to $H_j$ and $\varphi(H_i)$ is realized by a protected path with $2$ bad centers, then $H_j$ has a protected path with $2$ bad centers and at least $\tfrac{\varphi(H_i)-4}{6}$ good centers. 
In particular,
\[\varphi(H_j)\geq \frac{\varphi(H_i)-4}{6}.\]
\end{lem}

\begin{proof}
Apply Lemmas \ref{lem.noprogress}, \ref{lem.2protected}, \ref{lem.3protected}, and \ref{lem.2protected.extension} to see that White can find a sufficiently large protected path.
\end{proof}
\end{comment}

\begin{thm}\label{thm.corona.lower}
Suppose $q=3$, $k\geq 2$, and $n\geq 3$. Then \[2\log_3(2)\log_2(\tfrac{n}{200}) \leq \Z_{3}(C_{n,k}).\]
\end{thm}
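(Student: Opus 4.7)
The plan is to combine the White strategy introduced in this section (protected paths with two or three bad centers) with an iterated application of Lemma~\ref{lem.2step}. Lemma~\ref{lem.noprogress} already guarantees that $\varphi$ is non-increasing on free moves, so every decrease in $\varphi$ is charged to a token spent by Blue; the task reduces to bounding how quickly $\varphi$ can shrink as a function of token expenditure, and then arguing that the game cannot end while $\varphi > 0$.

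First I would set up the initial phase. Since $q+1 = 4$, Blue cannot invoke Rule~3 until there are at least four active blue centers, so a bounded number of tokens must be spent before any free forces are possible. Let $H_{i_0}$ be the first game state admitting a protected path with exactly two bad centers. At such a state the two bad centers partition the cycle into two arcs, and the longer arc contributes at least $\lceil(n-2)/2\rceil - 1$ good internal centers, so $\varphi(H_{i_0}) \geq (n-c)/2$ for a small explicit constant $c$. If Blue navigates to $H_{i_0}$ in a way that temporarily leaves $\varphi$ realized by a protected path with three bad centers, I would apply Lemma~\ref{lem.2protected.extension} to extract a 2-bad-center subpath with a quantitative loss that only contributes a constant factor to the final bound.

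Then I would iterate Lemma~\ref{lem.2step}: each pair of tokens Blue spends, starting from a state with a 2-bad-center witness, yields a new state with a 2-bad-center witness and the recurrence $\varphi \to (\varphi - 2)/3$. Solving the recurrence via the substitution $\tilde\varphi = \varphi + 1$ turns it into the pure geometric decay $\tilde\varphi_{j+1} = \tilde\varphi_j/3$, so $\tilde\varphi_j = \tilde\varphi_0 / 3^j$ and $\varphi_j \leq 0$ requires $j \geq \log_3(\varphi(H_{i_0}) + 1)$, i.e., at least $2 \log_3(\varphi(H_{i_0}) + 1)$ additional tokens. The game cannot terminate while $\varphi(H_i) > 0$, since a positive $\varphi$ witnesses a good center whose degree-one neighbors are white. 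Combining with the initial phase gives a lower bound of the form $2\log_3(n/C)$ for an explicit constant $C$; choosing any $C \leq 200$ comfortably absorbs all slack, and the identity $\log_3(x) = \log_3(2)\log_2(x)$ converts this to the stated form $2\log_3(2)\log_2(n/200)$.

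The main obstacle is propagating the 2-bad-center invariant cleanly through the iteration. Lemma~\ref{lem.2step} assumes that the current $\varphi$ is realized by a 2-bad-center path, but an adversarial Blue could temporarily force the best protected path to have three bad centers; handling this requires invoking Lemma~\ref{lem.2protected.extension} at the cost of a single extra ``repair'' token per transition, and verifying that the aggregate slack fits inside the $C = 200$ constant. A secondary technical point is confirming that White's strategy is well-defined throughout the game, which follows from the claim proved earlier in this section showing that any four white components all intersecting a protected path (or its degree-one neighborhood) admit a subset White can safely return without conceding a force.
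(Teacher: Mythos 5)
Your proposal follows essentially the same route as the paper: White plays the protected-path strategy, Lemma~\ref{lem.noprogress} charges every decrease of $\varphi$ to a token, Lemma~\ref{lem.2step} gives the recurrence $\varphi \to (\varphi-2)/3$ per two tokens starting from $\varphi(H_{i_0})\geq \tfrac{n-2}{2}$, and solving the recurrence (your substitution $\tilde\varphi=\varphi+1$ is equivalent to the paper's generating-function computation) yields $2\log_3(2)\log_2(n/200)$ after absorbing constants. The invariant you flag as the main obstacle is exactly what Lemma~\ref{lem.2step} is stated to preserve (its conclusion supplies a $2$-bad-center witness path for the next iteration), so your argument is correct as written.
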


\begin{proof}
Let $G_0=H_{i_0}$ be the first term in the sequence $\{H_i\}_{i=1}^t$ with two bad centers. 
Observe that $\varphi(G_0)\geq \tfrac{n-2}{2}=\varphi _0$. 
\begin{comment}
Let $G_j =H_{i_j}$ be a term such that Blue spent $3$ tokens to transition to $H_{i_j}$ from $G_{j-1}$. 
By Lemma \ref{lem.3step}, 
\[\varphi(G_j)\geq \frac{\varphi(G_{j-1})-4}{6}\geq \varphi_j\]
where $\varphi_j$ satisfies the recurrence relation $\varphi_j=\tfrac{\varphi_{j-1}-4}{6}$.
Solving the recurrence relation using generating function shows that \[\varphi(G_j)\geq \frac{n-2}{2\cdot 6^j}+\frac{4}{6^{j+1}}+\frac{4}{5\cdot 6^j}-\frac{4}{5}> \frac{n-2}{2\cdot 6^j} - 1.\]
Notice that as long as $\varphi(G_j)\geq 99$, White has not lost.
Therefore, Blue must spend at least $(1.16)\log_2(\tfrac{n-2}{200})$ tokens. 
\end{comment}
Let $G_j =H_{i_j}$ be a term in the sequence such that Blue spends $2$ tokens to transition to $H_{i_j}$ from $G_{j-1}$. 
By Lemma \ref{lem.2step}, 
\[\varphi(G_j)\geq \frac{\varphi(G_{j-1})-2}{3}\geq \varphi_j\]
where $\varphi_j$ satisfies the recurrence relation $\varphi_j=\tfrac{\varphi_{j-1}-2}{3}$.
Solving the recurrence relation using generating functions shows that \[\varphi(G_j)\geq \frac{n-2}{2\cdot 3^j}+\frac{1}{ 3^{j}}-1= \frac{n}{2\cdot 3^j} - 1.\]
Notice that as long as $\varphi(G_j)\geq 99$, White has not lost.
Therefore, Blue must spend at least $2\log_3(2)\log_2(\tfrac{n}{200})\approx 1.26 \log_2(\tfrac{n}{200})$ tokens. 
\end{proof}

Generalizing the proof for larger values of $q$ seems cumbersome since larger values of $q$ will allow for more bad centers within protected paths. 
Allowing more bad centers within a protected path gives Blue more choices of how to break up a protected path. 
For example, Blue may choose to spend many tokens to split a protected path.
Furthermore, these choices affect the distribution of bad centers within protected paths at later stages of the game.  
These considerations suggest that generalization is hard. 

Furthermore, a back of the envelope calculation suggests that if the proof of Theorem \ref{thm.corona.lower} is generalized for $q>3$, then the leading constant will be $(q-1)\log_q(2)$.
This value is significantly smaller than the upper bound we derive in Subsection \ref{sec:upperbound}.

\subsection{Upper bound for $q\geq 3$.}\label{sec:upperbound}

We are able to crudely generalize the upper bound proof from Theorem~\ref{thm.q=2}. 
Our goal is to understand how $\Z_q(C_{n,k})$ depends on $n$, $q$, and $k$. 
First, we reduce Blue's goal to only coloring the centers of $C_{n,k}$. 
Then we provide a strategy that Blue can use to turn all centers blue. 
Though we still consider maximal paths of white centers (as we did with protected paths), we no longer wish to track which centers are good or bad. 
This imprecision will manifest itself in a slightly worse upper bound in terms of $q$ and $k$. 

\begin{lem}\label{lem.q.geq.3.end}
Suppose $G$ is a copy of $C_{n,k}$ with $n\geq 3$ and $k\geq 2$ such that the blue set of $G$ contains all the centers. 
Then Blue can turn $G$ blue by spending at most $k-q+q(k-1)$ additional tokens. 
\end{lem}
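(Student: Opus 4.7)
The plan is to exhibit a greedy strategy for Blue that turns most leaves blue via free forces (Rule~2 and Rule~3), and spends tokens only on a small residue. Since every center is already blue, the white subgraph consists solely of isolated pendant leaves---each a degree-one vertex whose unique neighbor (a center) is blue. By Observation~\ref{edge_lemma}, we may delete all edges of the cycle (each of which joins two blue centers) without changing play, converting $G$ into a disjoint union of stars---a forest on which Lemma~\ref{force_lemma} applies. Call a blue center \emph{active} if it has at least two white leaves.

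Blue's greedy strategy is the following, applied in priority order. First, apply Rule~2 whenever some center has exactly one white neighbor. Second, if at least $q+1$ active centers remain, apply Rule~3 by presenting one white leaf from each of $q+1$ active centers; for any nonempty subset $S$ that White returns, each returned leaf's center has exactly one white neighbor in $G[B\cup\bigcup_{W\in S} W]$ (namely the returned leaf itself, since that center's other white leaves lie outside the subgraph), so Rule~2 forces every returned leaf for free, giving Blue at least one free force. Only when neither of the first two options applies does Blue spend one token on a white leaf of an active center.

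The first two rules cost no tokens, so the bound reduces to counting tokens spent in the third rule. The token-spending phase begins only once no center has a single white neighbor and at most $q$ centers remain active. Each such active center has at most $k$ white leaves, so Blue can finish it by spending $k-1$ tokens (Rule~2 handles the last leaf), contributing at most $q(k-1)$ tokens. The remaining $k-q$ tokens in the lemma's bound provide slack to absorb any transitional center left partially processed by the final Rule~3 application; note that $q(k-1)\le k-q+q(k-1)$ whenever $k\ge q$, so the clean argument already gives the bound in that regime.

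The main obstacle is a careful bookkeeping of the last few rounds of the Rule~3 phase to verify that White's adversarial returns never leave more than $k-q$ tokens' worth of white leaves outside the $\le q$ standard active centers. I expect this to follow from a short case analysis showing that any ``in-progress'' center at the phase transition has at most $k-q+1$ white leaves remaining, which Blue can clear with $k-q$ additional tokens followed by a single Rule~2 force.
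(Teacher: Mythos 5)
Your approach is the same as the paper's: strip the cycle edges with Observation~\ref{edge_lemma} to reduce to a disjoint union of stars with blue centers, then run the forest-of-stars analysis. The paper compresses this into one line by citing Proposition~\ref{prop.stars}; you re-derive the Rule~3 mechanics explicitly, which is arguably more careful, since Proposition~\ref{prop.stars} as stated computes $\Z_q$ of the stars from an all-white start rather than $\Z_q(nS_k,B)$ with the centers already blue. Your closing worry about ``in-progress'' centers at the phase transition is a phantom: once fewer than $q+1$ centers are active, every remaining white vertex is a pendant leaf of one of at most $q$ centers, each of which has at most $k$ white pendants and hence costs at most $k-1$ tokens; there is no partially processed residue, and your argument already yields the clean bound $q(k-1)$ with no further case analysis. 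The one genuine discrepancy you have correctly sniffed out is the regime $k<q$, where $q(k-1)>k-q+q(k-1)$: there the smaller constant claimed in the lemma does not follow from your argument, nor from the paper's citation of Proposition~\ref{prop.stars}, and in fact it appears not to hold (for $k=2$, $q=3$, $n$ large, White can concede one forced center per Rule~3 call until exactly three active centers remain, each with two white leaves, and then always return both leaves of a doubly selected center, so Blue must pay $3=q(k-1)$ rather than $2=k-q+q(k-1)$). The honest constant is $\min\{n,q\}(k-1)$; since Lemma~\ref{lem.q.geq.3.end} only feeds an additive constant into the upper bound of Theorem~\ref{thm.corona.upper}, nothing downstream is affected.
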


\begin{proof} 
Let $B$ be the blue set of $G$.
By Observation \ref{edge_lemma}, $\Z_q(G,B)= \Z_q(nS_k, B)$. 
The result follows from Proposition~\ref{prop.stars}.
\end{proof}

In general, we will suppose that $n$ is large. 
Consider the following strategy Blue can use to turn all  centers blue:
Blue will spend $p=q+1+x$ tokens where $x\equiv q+1\mod 2$ to color $p/2$ pairs of adjacent centers blue such that these pairs are as equally spaced out as possible on the cycle in $C_{n,k}$. Then Blue has the following options.

\begin{enumerate}[label=Option\,\arabic*:, wide=0pt, leftmargin=*]
\item Apply Lemma \ref{force_lemma} and Rule 3, if possible. 
\item Apply Rule 2, if possible.
\item If there are  at most $p/2 - 1$ maximal paths of white centers, then Blue will spend at most $2$ tokens within each maximal path of white centers to split each path into two paths of equitable length (or color the path completely blue). 
\end{enumerate}

Suppose $\{H_i\}^t_{i=0}$ is a sequence of colored copies of $C_{n,k}$ where $H_0$ has all white vertices, $H_t$ has all blue vertices, and $H_i$ follows form $H_{i-1}$ by one application of some $Z_q$ forcing rule; however, for the simplicity of notation, we will assume that Option 3 can be executed in one time step. 
In particular, assume that Blue plays according to the strategy above in the sequence $\{H_i\}_{i=1}^t$. 

\begin{obs}\label{obs.bookends}
Suppose $H_i$ is a game state after Blue has spent the first $p$ tokens. 
Then every blue center in $H_i$ is adjacent to another blue center in $H_i$. 
\end{obs}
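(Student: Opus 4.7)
The plan is to argue by induction on the game step, starting from the state $H_{i_0}$ reached immediately after Blue has spent the last of the first $p$ tokens. In the base case, the strategy places those $p$ tokens directly on $p/2$ pairs of adjacent centers along the cycle, so every blue center in $H_{i_0}$ is in one of these pairs and has a blue cycle-neighbor. Note that no Rule 2 force can fire during the initial token-spending phase from a newly-blue center, because each one still retains at least $k\geq 2$ white pendant leaves and at least one white cycle-neighbor, so has more than one white neighbor.

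The key auxiliary fact I would prove first is the following: at every moment of the game played under this strategy, any blue pendant leaf $\ell$ is adjacent to an already-blue center. Since tokens are spent only on centers (both in the initial phase and in Option 3), the leaf $\ell$ must have been colored by a Rule 2 or Rule 3 move, whose ultimate action is a Rule 2 force in some induced subgraph. The forcing vertex in that subgraph must be a neighbor of $\ell$, but the unique neighbor of a pendant leaf is its center, which is therefore already blue at the moment $\ell$ is colored.

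For the inductive step, assume the property holds in $H_{i-1}$ and consider Blue's action. If Blue applies Rule 2 (Option 2) and colors a new center $c$, then the forcing vertex $v$ is a blue neighbor of $c$, hence either a cycle-neighbor of $c$ (another center, immediately giving the desired adjacency) or a pendant leaf attached to $c$; the auxiliary fact rules out the second case, since it would require $c$ to have been blue already, contradicting that $c$ is newly colored. The identical argument applies verbatim when Blue uses Rule 3 (Option 1), working inside the reduced graph $G[B\cup W_{i_1}\cup\cdots\cup W_{i_\ell}]$, whose ultimate force is a Rule 2 force and in which a pendant leaf still has the same unique neighbor. If Blue invokes Option 3, tokens are spent on centers inside a maximal white-center path $P$: if $P$ contains at least two centers, Blue places the two tokens on an adjacent pair inside $P$, so the newly-blue centers are adjacent to each other; if $P$ is a single white center, one token colors it and its two cycle-neighbors are already blue by the maximality of $P$. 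In all cases the newly-blue centers sit next to a blue center, while already-blue centers keep the blue cycle-neighbor guaranteed by the inductive hypothesis.

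The main obstacle is precisely the pendant-leaf case inside Rules 2 and 3, where a blue leaf could in principle force its white center and so produce a newly-blue center with no blue cycle-neighbor. The auxiliary fact closes this gap, and ultimately rests on the strategic discipline of never spending a token directly on a pendant leaf.
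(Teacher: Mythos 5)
Your proof is correct. The paper states this as an Observation with no proof at all, so there is nothing to compare against; your induction supplies exactly the justification the authors leave implicit. The two load-bearing points are both handled properly: (i) the auxiliary fact that a blue pendant leaf always has an already-blue center (so no force, under Rule 2 or inside the Rule 3 subgraph, can ever color a center from "below"), which reduces every forced coloring of a center to a force along the cycle; and (ii) the reading of Option 3 as placing its (at most two) tokens on an \emph{adjacent pair} of centers inside a maximal white path, or on a single center whose cycle-neighbors are already blue by maximality. Point (ii) is an interpretive choice --- the strategy as written only says ``spend at most $2$ tokens within each maximal path ... to split each path'' --- but it is the only reading under which the observation holds (a single token in the interior of a long white path would create an isolated blue center), and it is clearly what the authors intend, since it is also why the initial $p$ tokens are placed in pairs.
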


\begin{lem} 
If Blue cannot invoke Options 1 or 2, then Blue can invoke Option 3. 
\end{lem}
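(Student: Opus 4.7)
The plan is to show that if Options~1 and 2 are both inapplicable, then the number $r$ of maximal paths of white centers on the cycle satisfies $r \leq p/2 - 1$, which is exactly the condition that triggers Option~3. I will bound $r$ by combining the two hypotheses with Observation~\ref{obs.bookends}.

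First I will unpack the hypotheses. Since Option~2 (Rule~2) is inapplicable, every blue vertex has either $0$ or at least $2$ white neighbors. Since Option~1 is inapplicable, Lemma~\ref{force_lemma} implies that the set $A$ of active blue centers (those with at least $2$ white neighbors) satisfies $|A| \leq q$.

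Next I will apply Observation~\ref{obs.bookends}, which guarantees that every blue center has another blue center as a cycle neighbor and therefore has at most one white cycle neighbor. This forces every maximal path of blue centers on the cycle to have at least two vertices, and hence to contribute two distinct endpoints. Each endpoint has exactly one white cycle neighbor, so in particular has at least one white neighbor; by the negation of Option~2 it then has at least two white neighbors and thus lies in $A$. Since an active center has at most one white cycle neighbor, it is an endpoint of at most one maximal blue path, and endpoints of distinct maximal blue paths are distinct. Hence the $2r$ endpoints are distinct active centers and $2r \leq |A| \leq q$. Because $r$ is an integer, $r \leq \lfloor q/2 \rfloor$. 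A brief case check on the parity of $q$, using $x \equiv q+1 \pmod 2$ and $x \geq 0$, shows $p/2 - 1 \geq \lfloor q/2 \rfloor$ in both cases, so $r \leq p/2 - 1$ and Option~3 is available.

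The main obstacle I expect is confirming that Observation~\ref{obs.bookends} remains an invariant throughout the entire game, not merely immediately after the initial $p$-token investment. I plan to verify this by induction on the moves following the initial phase: Option~3 colors adjacent pairs of centers by construction, and any blue center produced via Rule~2 or as the Rule~2 substep of Rule~3 must be forced by an adjacent blue vertex, which must itself be a blue \emph{center}. The latter point relies on the fact that a blue leaf never has a white center, since whenever a leaf is forced blue its unique neighbor (its center) was already blue at the moment of the force. Hence blue centers are always forced by adjacent blue centers, so the pairing structure, and therefore the invariant, persists.
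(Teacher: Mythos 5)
Your proof is correct and follows essentially the same route as the paper's: both identify the blue centers flanking the maximal runs of white centers, use Observation~\ref{obs.bookends} together with the failure of Rule~2 to show each such center is active, and use the failure of Rule~3 (via Lemma~\ref{force_lemma}) to cap the number of active centers at $q$, forcing the number of white runs below $p/2$. Your version is phrased directly rather than by contradiction and adds a worthwhile sanity check that the paper omits, namely verifying that Observation~\ref{obs.bookends} persists as an invariant throughout the game.
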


\begin{proof}
Assume that the blue set $B$ of $H_i$ precludes Blue from invoking Options 1 or 2. 
For the sake of contradiction, suppose $H_i$ has at least $p/2$ maximal paths of white centers. 
Let $A$ be the set of blue centers that are adjacent to a white center which is an endpoint of a maximal path of white centers.
By Observation \ref{obs.bookends}, $|A|\geq p\geq q+1$. 

Suppose $v$ is in $A$. 
This implies that $v$ has at least $1$ white neighbor. 
Since Option 1 cannot be invoked, $v$ must be adjacent to a white vertex of degree $1$. 
Therefore, $v$ is blue with at least $2$ white neighbors, and is considered active. 
This implies that $A$ is a set of at least $q+1$ active vertices, each of which is adjacent to a white degree $1$ vertex. 
This contradicts the fact that Option 2 cannot be invoked. 
Thus, 
$H_i$ has fewer than $p/2$ maximal paths. 
\end{proof}

\begin{lem}
Blue will use Option 3 at most $\log_2(n)$ times. 
\end{lem}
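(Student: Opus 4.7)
The plan is to introduce a potential function $\ell(H_i)$ equal to the maximum number of white centers in any maximal path of consecutive white centers along the cycle of $H_i$, and to track how $\ell$ evolves along the sequence $\{H_i\}_{i=0}^t$.

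First, I would verify that Options 1 and 2 are non-increasing on $\ell$. Both options only apply Rules 2 or 3 to turn white vertices blue, so no new white vertex ever appears, no maximal white path of centers can grow, and in particular $\ell$ does not increase between applications of Option 3.

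Next, I would analyze how Option 3 decreases $\ell$. By its definition, Option 3 acts independently on each maximal path of white centers $c_{i_1}\cdots c_{i_m}$, spending at most two tokens either to color the entire path blue when $m\leq 2$, or to color two adjacent internal centers blue, splitting the path into two sub-paths of equitable length. In the splitting case, the two resulting sub-paths each contain at most $\lceil (m-2)/2\rceil \leq m/2$ white centers. Therefore, writing $j$ for a time step at which Option 3 is invoked,
\[
\ell(H_j) \;\leq\; \max\!\bigl\{0,\ \lceil \ell(H_{j-1})/2\rceil\bigr\} \;\leq\; \ell(H_{j-1})/2.
\]

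Third, I would combine these estimates with the initial condition. Immediately after Blue spends the first $p$ tokens we trivially have $\ell \leq n$. Since $\ell$ is non-increasing between invocations of Option 3 (by Options 1 and 2) and is at least halved at every invocation of Option 3, and since Option 3 is only invoked when some maximal path of white centers actually exists (that is, $\ell \geq 1$), iterating the halving from an initial value of at most $n$ shows that at most $\log_2 n$ invocations can occur before no white center remains, giving the desired bound.

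The main subtlety is verifying the ``equitable split'' estimate $\lceil (m-2)/2\rceil \leq m/2$: Option 3 forces Blue to blue-color two \emph{neighboring} internal centers, which is always possible for $m\geq 3$, and the small cases $m\in\{1,2\}$ are absorbed into the ``color the entire path blue'' clause at a cost of at most two tokens. Once this geometric halving is in hand, the logarithmic count of Option 3 applications follows by straightforward induction.
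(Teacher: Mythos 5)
Your approach is essentially the paper's: track the length $\ell$ of the longest maximal path of white centers, check that Options 1 and 2 cannot increase it, and show that each invocation of Option 3 at least halves it. The one genuine gap is your initial condition. Starting from the trivial bound $\ell\leq n$, the halving recurrence gives $\ell(H_{j-1})\leq n/2^{\,j-1}$ before the $j$-th invocation, and the requirement $\ell(H_{j-1})\geq 1$ then only forces $j\leq \log_2(n)+1$ --- one more invocation than the lemma claims. The paper closes this factor of two by using the structure of Blue's opening move: the first $p$ tokens are placed on $p/2$ pairs of adjacent centers spaced as equally as possible around the cycle, so before Option 3 is ever invoked the longest path of white centers already has at most $\lceil 2(n-p)/p\rceil\leq n/2$ vertices (using $p\geq 4$). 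With that starting value the same halving argument yields $n/2^{j}\geq 1$, hence $j\leq\log_2(n)$. Your explicit verification that Options 1 and 2 are non-increasing on $\ell$, and the estimate $\lceil (m-2)/2\rceil\leq m/2$ for the equitable split, are correct and in fact more careful than what the paper writes; only the initial bound needs to be tightened from $n$ to $n/2$ for the stated constant to come out.
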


\begin{proof}
After the Blue spends the first $p$ tokens, we have that the the longest path of white centers has at most \[\left\lceil\frac{2(n-p)}{p}\right\rceil\leq \frac{n}{2}\]
vertices. 
Each time Blue invokes Option 3, the length of the longest path is reduced by a factor of at least $1/2$. 
Therefore, if $j$ is the number of times Blue invokes Option 3, 
then 
\begin{align*}
    \frac{n}{2}\frac{1}{2^{j-1}}=\frac{n}{2^{j}}&\geq 1\\
    n&\geq 2^{j}\\
    \log_2(n)&\geq j.
\end{align*}
This concludes the proof of the lemma.
\end{proof}

\begin{thm}
\label{thm.corona.upper}
 Let $n\geq 3$, $k,q\geq 2$, and $p= q+1+x$ where $x\equiv q+1\mod 2$. 
 Then \[2\log_3(2)\log_2(\tfrac{n}{200}) \leq \Z_{q}(C_{n,k}) \leq (p-2)\log_{2}(n) + p+ k-q+q(k-1).\]
\end{thm}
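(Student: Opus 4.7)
The plan is to split the result into its two halves. The lower bound $2\log_3(2)\log_2(n/200) \le \Z_q(C_{n,k})$ is proven only for $q=3$ in Theorem~\ref{thm.corona.lower}; to get it for all $q\ge 2$, I would invoke the monotonicity $\Z_2 \le \Z_3 \le \cdots \le \Z_q$ from Proposition~3.2 of \cite{BGH2015} (when $q\ge 3$) and for $q=2$ appeal directly to Theorem~\ref{thm.q=2}. So the lower bound needs no new computation. The entire work of the theorem lies in verifying that Blue's three-phase strategy for the upper bound spends exactly as many tokens as claimed.

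For the upper bound I would break the analysis into three phases matching Blue's strategy. \emph{Phase one} is the initial spend of $p$ tokens on $p/2$ pairs of adjacent centers, placed as evenly as possible on the cycle. This uses exactly $p$ tokens and, by Observation~\ref{obs.bookends}, partitions the remaining centers into at most $p/2$ maximal white-center paths of length at most $\lceil 2(n-p)/p\rceil\le n/2$. \emph{Phase two} iterates Options~1--3: Options~1 and 2 are free, and the preceding lemma guarantees that whenever they are unavailable Option~3 may be invoked, costing at most $2$ tokens per maximal white-center path, hence at most $2(p/2-1)=p-2$ tokens per round. The lemma that each round halves the longest white-center path then bounds the number of Option~3 rounds by $\log_2(n)$, so Phase two costs at most $(p-2)\log_2(n)$ tokens. \emph{Phase three} applies once every center is blue: by Observation~\ref{edge_lemma} the cycle edges among blue vertices become irrelevant, so the remaining game is equivalent to a forest of $n$ stars, and Lemma~\ref{lem.q.geq.3.end} (via Proposition~\ref{prop.stars}) completes it in at most $k-q+q(k-1)$ tokens.

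Adding the three contributions gives $p + (p-2)\log_2(n) + k - q + q(k-1)$, matching the upper bound in the statement. The main obstacle, and the only step that is not essentially a bookkeeping exercise on top of the already-proven lemmas, is justifying the per-round cost $p-2$: one must check simultaneously that Option~3 can always be carried out with only two tokens per maximal white path (choosing a degree-one neighbor of a center near the midpoint of the path is a canonical choice, after which Options~1 and 2 propagate the forcing to the midpoint itself), and that after executing Option~3 on every maximal white path the longest such path has indeed shrunk by a factor of at least two. Verifying these two facts carefully is what licenses the geometric-series bound of $\log_2(n)$ rounds, and once they are in hand the rest of the argument is pure accounting.
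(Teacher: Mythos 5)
Your upper bound argument is, in substance, identical to the paper's: the proof of Theorem~\ref{thm.corona.upper} given in Subsection~\ref{sec:upperbound} is exactly the three-phase accounting you describe ($p$ tokens up front, at most $2(p/2-1)=p-2$ tokens per invocation of Option~3, at most $\log_2(n)$ invocations by the halving lemma, and $k-q+q(k-1)$ further tokens for the residual forest of stars via Lemma~\ref{lem.q.geq.3.end} and Proposition~\ref{prop.stars}). The two points you flag as needing verification (that two tokens per maximal white path suffice for Option~3 and that the longest white-center path then halves) are precisely the content of the supporting lemmas the paper proves separately in that subsection, so that half of your proposal is sound and matches the paper.

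The gap is in your treatment of the lower bound at $q=2$. Monotonicity ($\Z_3\leq\Z_q$ for $q\geq 3$) correctly transfers the $q=3$ lower bound of Theorem~\ref{thm.corona.lower} upward, and that is the right reading of the theorem. But for $q=2$ you appeal to Theorem~\ref{thm.q=2}, which gives $\Z_2(C_{n,k})\geq \log_2(n)+1$, and this does not imply $\Z_2(C_{n,k})\geq 2\log_3(2)\log_2(\tfrac{n}{200})\approx 1.26\log_2(n)-9.6$ once $n$ is large (for $\log_2(n)$ beyond roughly $41$ the claimed bound exceeds $\log_2(n)+1$). Worse, Theorem~\ref{thm.q=2} also gives $\Z_2(C_{n,k})\leq\log_2(n-2)+2k+1$, which for fixed $k$ and sufficiently large $n$ falls strictly below $2\log_3(2)\log_2(\tfrac{n}{200})$; so the stated lower bound cannot hold at $q=2$ for all $n$, and no patch along the lines you propose can succeed. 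The paper's own proof silently establishes only the upper bound; the lower bound should be asserted only for $q\geq 3$, where your monotonicity argument does the job, and your proposal should drop the $q=2$ claim rather than attempt to derive it from Theorem~\ref{thm.q=2}.
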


\begin{proof}
Blue will follow the strategy given in this section to turn all the centers of $C_{n,k}$ blue.
This will require at most 
$2(p/2-1)\log_2(n)+p$ 
tokens. 
After all the centers are Blue, Lemma \ref{lem.q.geq.3.end} guarantees that at most $k-q+q(k-1)$ tokens are required for Blue to finish the game.
\end{proof}

\begin{cor} \label{cor.cat.upper}
For the caterpillar graph $P_{n,k}$,
\[2\log_3(2)\log_2(\tfrac{n}{200}) -2\leq \Z_q(P_{n,k}) \leq (p-2)\log_{2}(n) + p+ k-q+q(k-1)+1\]
where $p$ is defined as in Theorem \ref{thm.corona.upper}.
\end{cor}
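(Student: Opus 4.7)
The plan is to obtain this corollary as an almost immediate consequence of the edge deletion bound (Theorem~\ref{thm.edge.deletion}) together with Theorem~\ref{thm.corona.upper}. The key observation is that $P_{n,k}$ can be obtained from $C_{n,k}$ by deleting a single edge: namely, any edge on the underlying cycle $C_n$ between two centers. Once we identify this structural relationship, both bounds fall out of the machinery already developed.

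For the upper bound, I would apply the right inequality of Theorem~\ref{thm.edge.deletion}: letting $e$ be a cycle-edge of $C_{n,k}$ so that $C_{n,k}-e \cong P_{n,k}$, we get
\[\Z_q(P_{n,k}) = \Z_q(C_{n,k}-e) \leq \Z_q(C_{n,k}) + 1 \leq (p-2)\log_{2}(n) + p + k - q + q(k-1) + 1,\]
where the last inequality is Theorem~\ref{thm.corona.upper}. For the lower bound, I would apply the left inequality of Theorem~\ref{thm.edge.deletion} in the same setup:
\[\Z_q(P_{n,k}) = \Z_q(C_{n,k}-e) \geq \Z_q(C_{n,k}) - 2 \geq 2\log_3(2)\log_2(\tfrac{n}{200}) - 2.\]

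There is no real obstacle here, since the heavy lifting has already been done in Theorems~\ref{thm.edge.deletion} and~\ref{thm.corona.upper}. The only small thing to double-check is that removing a single edge of the cycle in $C_{n,k}$ produces precisely a graph in the family $P_{n,k}$ (that is, a path of $n$ centers, each still carrying between $2$ and $k$ pendants), which is immediate from the definitions of $C_{n,k}$ and $P_{n,k}$. Therefore the corollary is entirely mechanical, and the proof will consist of a single short paragraph invoking Theorems~\ref{thm.edge.deletion} and~\ref{thm.corona.upper} after observing that $P_{n,k} = C_{n,k} - e$ for a suitable cycle-edge $e$.
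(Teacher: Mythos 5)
Your proposal is correct and is exactly the argument the paper gives: the paper's proof of this corollary is the one-line "Apply Theorem~\ref{thm.edge.deletion} to Theorem~\ref{thm.corona.upper}," and you have simply spelled out the observation that $P_{n,k}=C_{n,k}-e$ for a cycle edge $e$ and the two resulting inequalities.
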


\begin{proof}
Apply Theorem \ref{thm.edge.deletion} to Theorem \ref{thm.corona.upper}.
\end{proof}

Since $P_n\circ kK_1$ is a tree, Theorem \ref{thm.tree.upper} also applies. 
However, a simple comparison shows that Corollary \ref{cor.cat.upper} is a significant improvement. 
Suppose $n$ is odd and $q\geq 2$.
Then 
\[\min_{v\in V(P_n\circ kK_1)}\left\{\deg(v) + \sum\limits_{l=1}^{h-1} \sum\limits_{a\in A_{\ell} } \mathop{\max} \{ 0,\deg(a)-2\}\right\}\geq k+2 + \frac{n-3}{2}2k. \]
Notice $k+2 + \tfrac{n-3}{2}2k$ dominates $(p-2)\log_{2}(n) + p+ k-q+q(k-1)$ asymptotically in both $n$ and $k$.

\section*{Acknowledgements}
This research was conducted in Summer 2022 as a part of the Summer Undergraduate Applied Mathematics Institute at Carnegie Mellon University.

\bibliographystyle{plainurl}
\bibliography{bibliography}

\begin{thebibliography}{1}

\bibitem{rank2008special}
F.~Barioli, W.~Barrett, S.~Butler, SM~Cioaba, D.~Cvetkovic, SM~Fallat,
  C.~Godsil, W.~Haemers, L.~Hogben, R.~Mikkelson, S.~Narayan, O.~Pryporova,
  I.~Sciriha, W.~So, D.~Stevanovic, H.~van~der Holst, K.~Vander Meulen, and
  A.~Wangsness.
\newblock Special graphs work group. zero forcing sets and the minimum rank of
  graphs.
\newblock {\em Linear Algebra and its Applications}, 428(7):1628--1648, 2008.

\bibitem{barioli2010zero}
Francesco Barioli, Wayne Barrett, Shaun~M Fallat, H~Tracy Hall, Leslie Hogben,
  Bryan Shader, Pauline Van Den~Driessche, and Hein Van Der~Holst.
\newblock Zero forcing parameters and minimum rank problems.
\newblock {\em Linear Algebra and its Applications}, 433(2):401--411, 2010.

\bibitem{barrett2009inverse}
Wayne Barrett, H~Tracy Hall, and Raphael Loewy.
\newblock The inverse inertia problem for graphs: Cut vertices, trees, and a
  counterexample.
\newblock {\em Linear Algebra and its Applications}, 431(8):1147--1191, 2009.

\bibitem{BEFHKLSWY20}
Steve Butler, Craig Erickson, Shaun Fallat, H~Tracy Hall, Brenda Kroschel,
  Jephian C-H Lin, Bryan Shader, Nathan Warnberg, and Boting Yang.
\newblock Properties of a q-analogue of zero forcing.
\newblock {\em Graphs and Combinatorics}, 36(5):1401--1419, 2020.

\bibitem{BGH2015}
Steve Butler, Jason Grout, and H~Tracy Hall.
\newblock Using variants of zero forcing to bound the inertia set of a graph.
\newblock {\em The Electronic Journal of Linear Algebra}, 30:1--18, 2015.

\bibitem{curl2020skew}
Emelie Curl, Jesse Geneson, and Leslie Hogben.
\newblock Skew throttling.
\newblock {\em Australasian Journal of Combinatorics}, 78(1):177--190, 2020.

\bibitem{hogben2022inverse}
Leslie Hogben, Jephian C-H Lin, and Bryan~L Shader.
\newblock {\em Inverse Problems and Zero Forcing for Graphs}, volume 270.
\newblock American Mathematical Society, 2022.

\bibitem{west2001introduction}
Douglas~Brent West et~al.
\newblock {\em Introduction to graph theory}, volume~2.
\newblock Prentice hall Upper Saddle River, 2001.

\end{thebibliography}

\section{Appendix}

\begin{lem} \label{lemma:23}
Let the sequence $\{l_i\}_{i=0}^\infty$ be defined by the following recurrence relation
\[
l_i = \frac{l_{i-1}}{2} - \frac{1}{2}\quad \text{and} \quad l_0 = n-1.
\]
For any $i\geq 0$, 
\[
     l_i = (l_0 + 1) \left(\frac{1}{2}\right)^i -1.
\]
Moreover $ 0\geq l_j $  if and only if $j\geq \log_2(n)$. 
\end{lem}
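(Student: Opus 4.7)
The plan is to prove the closed form by reducing the affine recurrence to a homogeneous one, then deduce the "iff" statement by an elementary monotonicity argument.

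First I would introduce the shifted sequence $m_i := l_i + 1$. The recurrence $l_i = \tfrac{l_{i-1}}{2} - \tfrac{1}{2}$ rewrites as
\[
m_i - 1 = \frac{m_{i-1} - 1}{2} - \frac{1}{2} = \frac{m_{i-1}}{2} - 1,
\]
so $m_i = \tfrac{m_{i-1}}{2}$ with $m_0 = l_0 + 1 = n$. This homogeneous recurrence has the obvious solution $m_i = n \cdot (1/2)^i$, giving
\[
l_i = m_i - 1 = (l_0 + 1)\left(\tfrac{1}{2}\right)^i - 1,
\]
which is the desired closed form. If preferred, the formula can be verified directly by induction on $i$: the base case $i=0$ gives $l_0 = (l_0+1) - 1 = l_0$, and assuming the formula for $i-1$, one computes
\[
l_i = \frac{l_{i-1}}{2} - \frac{1}{2} = \frac{(l_0+1)(1/2)^{i-1} - 1}{2} - \frac{1}{2} = (l_0+1)\left(\tfrac{1}{2}\right)^{i} - 1.
\]

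For the second claim, I would simply chase inequalities: since $l_0 + 1 = n > 0$, the quantity $l_j = n \cdot 2^{-j} - 1$ is a strictly decreasing function of $j$, and
\[
l_j \leq 0 \iff n \cdot 2^{-j} \leq 1 \iff 2^j \geq n \iff j \geq \log_2 n.
\]
Taking the contrapositive gives the other direction. There is no real obstacle here; the only subtlety is being careful that the shift $m_i = l_i + 1$ is the correct substitution (equivalently, that $-1$ is the unique fixed point of the affine map $x \mapsto (x-1)/2$), after which the argument is routine.
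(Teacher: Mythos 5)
Your proof is correct, but it takes a different route from the paper: the paper solves the recurrence by a generating functions computation, forming $F(x)=\sum_i l_i x^i$, using the recurrence to write $F(x)$ in terms of $xF(x)$ and a geometric series, solving for $F(x)$, and reading off the coefficients, whereas you shift by the fixed point $-1$ of the affine map to reduce to the homogeneous recurrence $m_i = m_{i-1}/2$ (and also offer a direct induction as a fallback). Your substitution argument is shorter and more transparent --- it makes clear \emph{why} the closed form has the shape $(l_0+1)(1/2)^i - 1$, with the $-1$ being the fixed point --- while the paper's generating function method is a more mechanical general-purpose technique that would extend without new ideas to recurrences where no clean substitution presents itself (indeed the paper uses the same technique for the recurrence $\varphi_j = (\varphi_{j-1}-2)/3$ in Theorem \ref{thm.corona.lower}). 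Both proofs of the closed form are complete, and your handling of the final equivalence $l_j \le 0 \iff j \ge \log_2 n$ is the same elementary inequality chase the paper implicitly relies on; no gaps.
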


\begin{proof}
We show the first result by a generating functions argument. Consider: 
\begin{align*}
    F(x) &= \sum_{i=0}^\infty l_ix^i \\
    %&= l_0 + \frac{1}{2}\sum_{i=1}^\infty \left( l_{i-1} -1 \right)x^i \\
    %&= l_0 + \frac{1}{2} \sum_{i=1}^\infty l_{i-1} x^i - \frac{1}{2} \sum_{i=1}^\infty  x^i \\
    &= l_0 +\frac{1}{2} xF(x) - \frac{1}{2}\left( \frac{1}{1-x} - 1 \right).
\end{align*}
Next we solve for $F(x)$ we obtain that: 
\begin{align*}
    F(x) &= \frac{l_0}{\left(1-\frac{x}{2}\right)} - \frac{1}{2(1-x)\left(1-\frac{x}{2}\right)}+\frac{1}{2\left(1-\frac{x}{2}\right)}\\
    %F(x) &= \frac{l_0}{\left(1-\frac{x}{2}\right)} - \frac{1}{2}\left( \frac{2}{1-x} - \frac{1}{1-\frac{x}{2}} \right)+\frac{1}{2\left(1-\frac{x}{2}\right)}\\
    %F(x) &= \frac{l_0}{\left(1-\frac{x}{2}\right)} - \frac{1}{1-x} +\frac{1}{2(1-\frac{x}{2})}+\frac{1}{2\left(1-\frac{x}{2}\right)}\\
    %F(x) &= \frac{l_0}{\left(1-\frac{x}{2}\right)} - \frac{1}{1-x} + \frac{1}{\left(1-\frac{x}{2}\right)}\\
    %F(x) &= \frac{l_0 + 1 }{\left(1-\frac{x}{2}\right)} - \frac{1}{1-x} \\
    %F(x) &= \sum_{i=0}^{\infty} (l_0 + 1) \left(\frac{1}{2}\right)^i x^i -  \sum_{i=0}^{\infty} x^i\\
    F(x) &= \sum_{i=0}^{\infty} \left( (l_0 + 1) \left(\frac{1}{2}\right)^i -1  \right) x^i\\
\end{align*}
Therefore,
\begin{align*}
    l_i = (l_0 + 1) \left(\frac{1}{2}\right)^i -1
\end{align*} 
for $i\geq 0$
as desired.
Since $l_0 = n-1$, we have that 
$ 0\geq l_j $  if and only if $j\geq \log_2(n)$. 
\end{proof}

\end{document}